\title[generic cohomology of local and global Shimura varieties]
{On the generic part of the cohomology of local and global Shimura varieties}
\author{Teruhisa Koshikawa}
\address{Research Institute for Mathematical Sciences, Kyoto University}
\email{teruhisa@kurims.kyoto-u.ac.jp}
\theoremstyle{plain}
\newtheorem{thm}{Theorem}[section]
\newtheorem{lem}[thm]{Lemma}
\newtheorem{prop}[thm]{Proposition}
\newtheorem{cor}[thm]{Corollary}
\theoremstyle{definition}
\newtheorem{rem}[thm]{Remark}
\newtheorem{conj}[thm]{Conjecture}
\newcommand\fm{\mathfrak m}
\newcommand\fp{\mathfrak p}
\newcommand\bA{\mathbb A}
\newcommand\bX{\mathbb X}
\newcommand\bC{\mathbf C}
\newcommand\bD{\mathbf D}
\newcommand\bF{\mathbf F}
\newcommand\bQ{\mathbf Q}
\newcommand\bZ{\mathbf Z}
\newcommand\cD{\mathcal D}
\newcommand\cE{\mathcal E}
\newcommand\cF{\mathcal F}
\newcommand\cG{\mathcal G}
\newcommand\cM{\mathcal M}
\newcommand\cO{\mathcal O}
\newcommand\cS{\mathcal S}
\newcommand\cZ{\mathcal Z}
\newcommand\Fl{\mathscr F \ell}
\newcommand{\ov}{\overline}
\DeclareMathOperator{\GL}{GL}		% The general linear group
\DeclareMathOperator{\Frob}{Frob}
\DeclareMathOperator{\End}{End}
\DeclareMathOperator{\Hom}{Hom}
\DeclareMathOperator{\Rep}{Rep}
\DeclareMathOperator{\id}{id}
\DeclareMathOperator{\Res}{Res}
\DeclareMathOperator{\cInd}{c-Ind}
\DeclareMathOperator{\Bun}{Bun}
\DeclareMathOperator{\Spa}{Spa}
\DeclareMathOperator{\Spd}{Spd}
\DeclareMathOperator{\Ext}{Ext}
\DeclareMathOperator{\semi}{ss}
\DeclareMathOperator{\op}{op}
\DeclareMathOperator{\lis}{lis}
\DeclareMathOperator{\spec}{spec}
\DeclareMathOperator{\Ig}{Ig}
\DeclareMathOperator{\pr}{pr}
\DeclareMathOperator{\unip}{unip}
\newcommand{\et}{\mathrm{\acute{e}t}}	% for etale cohomology (mainly used in subscripts)
\begin{document}

\begin{abstract}
Using the work of Fargues-Scholze, we prove a vanishing theorem for the generic unramified part of the cohomology of local Shimura varieties of general linear groups. This gives an alternative approach to vanishing results of Caraiani-Scholze for the cohomology of unitary Shimura varieties. 
\end{abstract}

\maketitle

\section{Introduction}
\subsection{Local vanishing}
Let $F$ be a finite extension of $\bQ_p$ for some $p$, with the residue field isomorphic to $\bF_{q}$. Take a local Shimura datum $(G, b, \mu)$ over $F$ with $G$ being a product of general linear groups $\GL_{n_i}$ over $F$ indexed by a finite set $I$. 
Let $E\subset \overline{F}$ denote the reflex field and put $C:=\widehat{\overline{E}}$. 
We will work with a hyperspecial open compact subgroup $K:=\prod_{i\in I} \GL_{n_i} (\cO_{F})\subset G (F)$, and  
let $\cM_{(G, b, \mu), K}$ denote the corresponding local Shimura variety of level $K$ \cite{Scholze:Berkeley}*{24.1.3}. Consider the compactly supported cohomology $R\Gamma_c (\cM_{(G, b, \mu), K}, \bZ_{\ell})$ for $\ell \neq p$ (taken after base change to $C$), equipped with \emph{left} action of $J_b (F)$, and \emph{right} action of the Hecke algebra $H_K :=\bZ_{\ell}[K \backslash G(F) / K]$. (The Weil group $W_E$ also acts but this will not be relevant to us.)

Suppose $\fm \subset H_K$ is the maximal ideal corresponding to a character $\lambda_{\fm}\colon H_K \to \overline{\bF}_{\ell}$. We have the associated unramified $L$-parameter
\[
\rho_{\fm}\colon \Frob_F^{\bZ} \to \prod_{i\in I} \GL_{n_i} (\overline{\bF}_{\ell}); 
\]
this is just determined by a conjugacy class of semisimple elements. 
Let us say that $\rho_{\fm}$ is \emph{generic} if, for every $i\in I$, the eigenvalues $\alpha_1, \dots ,\alpha_{n_i}$ of $\rho_{\fm} (\Frob_{F})$ regarded as an element of $\GL_{n_i}(\overline{\bF}_{\ell})$ satisfy $\alpha_{j'}/\alpha_{j} \neq q$ for all $j\neq j'$. 

\begin{thm}\label{local vanishing}
If $\rho_{\fm}$ is \emph{generic} and $J_b$ is \emph{not quasi-split}, then the localized cohomology
\[
H^i_c (\cM_{(G, b, \mu), K}, \bZ_{\ell})_{\fm}
\]
vanishes for every integer $i$. 
\end{thm}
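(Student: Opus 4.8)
The plan is to use the work of Fargues--Scholze to rewrite the localized cohomology as the restriction to a single Harder--Narasimhan stratum of a Hecke operator applied to a sheaf on $\Bun_G$, and then to use the genericity of $\rho_\fm$ to see that this sheaf is supported away from the non-quasi-split strata. First I would recall the description of the cohomology through $\Bun_G$. Let $i_{b'}\colon\Bun_G^{b'}\hookrightarrow\Bun_G$ denote the Harder--Narasimhan strata, so that $\Bun_G^{1}=[*/\underline{G(F)}]$ is the open point of the trivial $G$-bundle; let $T_\mu$ be the Hecke operator attached to $\mu$ via geometric Satake; and put $\cF_K:=i_{1!}\bigl(\cInd_K^{G(F)}\bZ_\ell\bigr)\in D(\Bun_G,\bZ_\ell)$. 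Then Fargues--Scholze's computation of the cohomology of moduli of local shtukas gives, up to a shift and twist (and possibly a replacement of $\mu$ by $-w_0\mu$), a $J_b(F)\times H_K$-equivariant identification
\[
R\Gamma_c(\cM_{(G,b,\mu),K},\bZ_\ell)\;\cong\; i_b^{*}\,T_\mu(\cF_K),
\]
so, after localizing at $\fm$, it suffices to show $\bigl(i_b^{*}T_\mu(\cF_K)\bigr)_\fm=0$.

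Next I would pass to the spectral side. Via the spectral action of Fargues--Scholze the unramified Hecke algebra $H_K$ acts through functions on the stack of $L$-parameters, so the $\fm$-localization is the summand $D(\Bun_G,\bZ_\ell)_\fm$ cut out by the maximal ideal $\fm$, equivalently by the semisimple unramified parameter $\rho_\fm$; since the spectral action commutes with $T_\mu$, the complex $\cG:=T_\mu(\cF_K)_\fm$ lies in $D(\Bun_G,\bZ_\ell)_\fm$, and we must show $i_b^{*}\cG=0$. The crux is the assertion that \emph{for generic $\rho_\fm$, every object of $D(\Bun_G,\bZ_\ell)_\fm$ is supported on the union of the strata $\Bun_G^{b'}$ with $J_{b'}$ quasi-split}. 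Granting it, since $J_b$ is not quasi-split one gets $i_b^{*}\cG=0$, hence $R\Gamma_c(\cM_{(G,b,\mu),K},\bZ_\ell)_\fm=0$ and in particular $H^i_c(\cM_{(G,b,\mu),K},\bZ_\ell)_\fm=0$ for every $i$.

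To prove the assertion I would treat one non-quasi-split stratum $b'$ at a time. The restriction $i_{b'}^{*}\cG$ is a complex of smooth $J_{b'}(F)$-representations, and by the compatibility of the spectral actions under $i_{b'}^{*}$ — the $\widehat G$-parameters on $\Bun_G^{b'}$ being pulled back from $\widehat{J_{b'}}$-parameters along $\widehat{J_{b'}}\hookrightarrow\widehat G$, $J_{b'}$ being an inner form of a Levi of $G$ — this complex is again $\fm$-localized, so every irreducible subquotient $\sigma$ has Fargues--Scholze parameter whose image among the $\widehat G$-parameters lies near $\rho_\fm$. Here genericity is essential: the condition $\alpha_{j'}/\alpha_j\neq q$ rules out precisely the linked pairs of characters whose presence would allow one to deform $\rho_\fm$ to a parameter with non-trivial monodromy, so the only nearby parameters are unramified and semisimple; hence the parameter of $\sigma$ is itself semisimple and monodromy-free, a direct sum of unramified characters no two of which differ by $|\cdot|^{\pm1}$. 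But $J_{b'}$ not quasi-split means it is a product of groups $\GL_m(D)$ with some division algebra $D\neq F$, and for such a group the Fargues--Scholze parameter of any irreducible smooth representation agrees, via Jacquet--Langlands, with the classical one, which always involves a Steinberg-type or a higher-dimensional irreducible constituent — it is never a sum of pairwise unlinked unramified characters. Therefore no such $\sigma$ can occur, so $i_{b'}^{*}\cG=0$; this proves the assertion.

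The main obstacle is the assertion sketched above, and within it two substantive inputs. First, one must make precise, with $\bZ_\ell$-coefficients, that $\fm$-localization near a generic $\rho_\fm$ really controls the Fargues--Scholze parameters of the subquotients on each stratum: this requires understanding the integral structure of the stack of $L$-parameters near a generic point, the ensuing block decomposition of $D(\Bun_G,\bZ_\ell)$, and its compatibility with the Harder--Narasimhan stratification. Second, one needs the representation-theoretic input that non-quasi-split inner forms of $\GL_n$ carry no representation with a generic parameter, in a form compatible with the Fargues--Scholze correspondence. As an alternative to the spectral-action step, one could instead combine Mantovan's formula with the Harris--Viehmann reduction to the basic case over the relevant Levi and the Kottwitz conjecture for basic local Shimura varieties, using that a generic unramified principal series is not a discrete series representation.
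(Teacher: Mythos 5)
Your proposal is correct and follows essentially the same route as the paper: the Fargues--Scholze identification $R\Gamma_c(\cM_{(G,b,\mu),K},\bZ_\ell)\cong i_b^{*}T_{r_\mu}(i_{1!}\cInd_K^{G(F)}\bZ_\ell)$, the spectral action/excursion operators to pin the Fargues--Scholze parameters of all irreducible subquotients on the stratum $b$ to $\rho_\fm$, and the representation-theoretic input (supercuspidal support, the M\'{\i}nguez--S\'{e}cherre lift to characteristic $0$, and the Hansen--Kaletha--Weinstein compatibility with Jacquet--Langlands) showing a non-quasi-split inner form carries no irreducible representation with generic unramified parameter. The only inessential detour is your worry about nearby parameters with monodromy: the Fargues--Scholze parameter is semisimple by construction, so localization at $\fm$ directly forces the parameter to equal $\rho_\fm$, and genericity is needed only in the final Jacquet--Langlands step.
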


The work of Fargues-Scholze \cite{FS} is fundamental in our proof. In some sense, they prove that ``Jacquet-Langlands transfer'' appears in the cohomology of local Shimura varieties even for mod $\ell$ coefficients. We also use the results of \cite{HKW} and \cite{MS:lift} to relate it to the actual Jacquet-Langlands correspondence in characteristic $0$.  
The genericity assumption implies that $\lambda_{\fm}$ is not in the image of ``Jacquet-Langlands transfer''. 
(The underlying idea is loosely related to some arguments in \cites{CS, CS:noncompact}, e.g., \cite{CS}*{5.4.3}.)
It is possible to prove a variant for not necessarily generic representations by putting more restriction on $J_b$ instead; this is related to the work of Boyer on the Harris-Taylor Shimura variety \cite{Boyer}, and will be discussed elsewhere. 

\subsection{Application to unitary Shimura varieties}

We can use Theorem \ref{local vanishing} to give an alternative argument for vanishing theorems of \cites{CS, CS:noncompact}. To be precise, our argument only uses their results on the geometry of the Hodge-Tate period map. 

Let $F$ be a CM field. 
Let $(B, *, V, (\cdot, \cdot))$ be a PEL datum of type A with $B$ being a central simple $F$-algebra, and let $(G, X)$ denote the associated Shimura datum.
Take a prime $p$ that \emph{completely splits} over $F$. 
In particular, choosing suitable places $v_1, \dots , v_a$ of $F$ dividing $p$, we have an isomorphism
\[
G_{\bQ_p} \cong \mathbb{G}_m^{\times} \times \prod_{i} G_{v_i}, 
\]
where $G_{v_i}$ is a product of general linear groups over $F_{v_i}(\cong \bQ_p)$. 
Take a hyperspecial open compact subgroup $K_p=\bZ_p^{\times}\times \prod K_{v_i}$ of $G(\bQ_p)$ . 
The Hecke algebra at $p$
\[
H_{K_p}:=\bZ_{\ell}[K_p \backslash G(\bQ_p) /K_p]
\]
acts via \emph{right} on the cohomology of Shimura variety
\[
H^i (S_K, \overline{\bF}_{\ell}), \quad H^i_c (S_K, \overline{\bF}_{\ell})
\]
for sufficiently small open compact subgroup $K=K_p K^p \subset G(\bQ_p)\times G(\bA^{p}_f)$. 
(We are considering the right action, but it is common in the literature to consider the left action obtained by composing the right action with an involution of $H_{K_p}$.)
Set $d:=\dim S_K$. 

\begin{conj}\label{conj}
Let $\lambda_{\fm_p}\colon H_K\to \overline{\bF}_{\ell}$ denote the character corresponding to a maximal ideal  $\fm_p \subset H_{K_p}$ such that $\rho_{\fm_p}$ is \emph{generic}. 
If $H^i (S_K, \overline{\bF}_{\ell})_{\fm_p}\neq 0$, then $i\geq d$. 
If $H^i_c (S_K, \overline{\bF}_{\ell})_{\fm_p}\neq 0$, then $i\leq d$. 
\end{conj}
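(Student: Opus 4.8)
The plan is to deduce the global vanishing from the local vanishing theorem (Theorem \ref{local vanishing}) via the geometry of the Hodge–Tate period map, following the general strategy of Caraiani–Scholze but substituting our local input at the boundary strata. First I would pass to the perfectoid Shimura variety $S_{K^p}$ at infinite level at $p$ and recall the Hodge–Tate period morphism $\pi_{\mathrm{HT}}\colon S_{K^p} \to \Fl_{G,\mu}$ to the flag variety. The key geometric fact—which is exactly the part we are allowed to import from \cites{CS, CS:noncompact}—is that $\Fl_{G,\mu}$ admits a stratification by Newton strata (equivalently, $\Bun_G$-strata), and the fibers of $\pi_{\mathrm{HT}}$ over a fixed Newton stratum $\Fl^b$ are, up to the $\mu$-ordinary locus, closely related to Igusa varieties $\Ig^b$; more precisely, the compactly supported cohomology of $\pi_{\mathrm{HT}}^{-1}(\Fl^b)$ is computed by the cohomology of the local Shimura variety $\cM_{(G_b, b, \mu), K_p}$ (or the corresponding "Igusa stack" fiber product) tensored with the cohomology of $\Ig^b$, where $G_b$ here is the relevant product of general linear groups coming from the split place $p$.

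The main step is then a spectral-sequence argument. Using the Leray spectral sequence for $\pi_{\mathrm{HT}}$ (pushed forward to $\Fl_{G,\mu}$) together with the excision triangles for the Newton stratification, I would express $R\Gamma_c(S_K, \overline{\bF}_\ell)_{\fm_p}$ (and dually $R\Gamma(S_K,\overline{\bF}_\ell)_{\fm_p}$) in terms of the localized compactly supported cohomology of the strata. For the \emph{basic} (i.e.\ most supersingular) and more generally for every non-$\mu$-ordinary stratum, the relevant group $J_b$ is \emph{not} quasi-split, so Theorem \ref{local vanishing} forces $H^*_c(\cM_{(G,b,\mu),K_p})_{\fm_p} = 0$ after localizing at a generic $\fm_p$; hence those strata contribute nothing. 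Only the $\mu$-ordinary (open) stratum survives, and there the Hodge–Tate fiber is (a torsor under) a group with the expected cohomological amplitude—concentrated in the single perfectoid-affine-space-like degree—so the contribution of the open stratum to $H^i_c$ is supported in degrees $\le d$ and to $H^i$ in degrees $\ge d$. Assembling the pieces through the excision spectral sequence (whose differentials only mix strata of the surviving type, hence cannot push cohomology outside the stated range) gives the two bounds. One must also handle the toroidal/minimal compactification in the noncompact case, exactly as in \cite{CS:noncompact}, and check that the extra $\mathbb{G}_m$-factor at $p$ and the central character do not interfere with genericity.

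I should be careful about a few technical points: the identification of the $\pi_{\mathrm{HT}}$-fibers with (products involving) local Shimura varieties is only clean after a $t$-structure/perversity bookkeeping, and one needs the Hecke action of $H_{K_p}$ to be compatible with this identification so that localization at $\fm_p$ commutes with all the spectral sequences; this compatibility is where the Fargues–Scholze formalism underlying Theorem \ref{local vanishing} must be invoked uniformly across strata (not just locally). The hardest part will be this uniform bookkeeping—controlling the cohomology of the boundary of the compactification and of the non-basic but non-$\mu$-ordinary strata simultaneously, and making sure that after localizing at a generic maximal ideal the only surviving geometric contribution is a single "affine-space-like" piece in the expected degree. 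Once that is in place, the degree bounds $i \ge d$ and $i \le d$ follow formally from the perverse amplitude of $R\pi_{\mathrm{HT}*}$ and the affineness of the $\mu$-ordinary Igusa fibers, as in the Caraiani–Scholze argument.
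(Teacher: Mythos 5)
Your overall skeleton coincides with the paper's: Mantovan's formula / excision along the Newton stratification of $\Fl$, Theorem \ref{local vanishing} to kill every non-$\mu$-ordinary stratum after localizing at a generic $\fm_p$ (via $J_b$ non-quasi-split), and the Caraiani--Scholze (semi)perversity of $R\pi_{HT*}$ to bound the degree of the surviving ordinary contribution. But there is a genuine gap at the final assembly step, which you dismiss as bookkeeping. The excision decomposition expresses the ordinary contribution as $R\Gamma_c([\Fl(\bQ_p)/\underline{K_p}], i^{b_0*}R\pi^{\circ}_{HT*}\ov{\bF}_{\ell})_{\fm_p}$, i.e.\ via the $*$-restriction to the ordinary stratum --- which, note, is a \emph{closed} stratum of $\Fl$ (the Newton stratification on the flag variety runs in the reverse direction), not an open one as you assert. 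The semiperversity theorem, on the other hand, only bounds the $!$-restriction $Ri^{b_0!}R\pi^{\circ}_{HT*}\ov{\bF}_{\ell}$ (the costalk/local cohomology along $\Fl(\bQ_p)$). These two restrictions differ by the contributions $Ri^{b_0!}i^b_!i^{b*}R\pi^{\circ}_{HT*}\ov{\bF}_{\ell}$ of the non-ordinary strata, and showing that these vanish after localization at $\fm_p$ is Proposition \ref{Key 2} of the paper. Its proof is \emph{not} another application of Theorem \ref{local vanishing} stratum by stratum, nor does it follow from the shape of the excision spectral sequence: one must descend $i^{b*}R\pi^{\circ}_{HT*}\ov{\bF}_{\ell}$ to an admissible complex $V_b$ on $\Bun^b_{G_{\bQ_p}}$ via the smooth map $[\Fl/\underline{K_p}]\to\Bun_{G_{\bQ_p}}$, control the spectral Bernstein center action on $\bD(i^b_!V_b)$ (compatibility of Verdier duality with excursion operators and the involution $D^{\spec}$), and invoke Lemma \ref{non-quasi-split implies non-generic} to see that its support misses all generic unramified parameters. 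Your plan contains no mechanism for this comparison.

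A second, smaller point: as stated, the conjecture is not proved in the paper in full generality. The semiperversity input (\cite{CS:noncompact}*{Theorem 4.6.1}) is only available in the settings of Theorems \ref{CS} and \ref{compact}, so your argument, even once the gap above is filled, yields only those cases; the paper is explicit that the general case awaits a generalization of the Caraiani--Scholze results on the geometry of Igusa varieties.
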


Let us restrict ourselves to the settings of \cites{CS, CS:noncompact}:

\begin{thm}\label{CS}
Assume $B=F$, $V=F^{2n}$, and $G$ is a quasi-split similitude unitary group. 
Then, Conjecture \ref{conj} holds true. 
\end{thm}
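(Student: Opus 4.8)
The plan is to deduce the global statement from the local vanishing Theorem \ref{local vanishing} by means of the Hodge-Tate period map, following the strategy of \cites{CS, CS:noncompact} but replacing their fiberwise analysis with the local input. First I would recall the geometric setup: after passing to a suitable infinite-level perfectoid Shimura variety $S_{K^p}$ at $p$, Caraiani-Scholze construct the Hodge-Tate period morphism $\pi_{\mathrm{HT}}\colon S_{K^p}\to \Fl_{G,\mu}$ to the flag variety, which is equivariant for the Hecke action at $p$ and has a well-understood fiber structure. The key geometric fact I would borrow (this is exactly the part of \cites{CS, CS:noncompact} we are allowed to use) is that the target $\Fl_{G,\mu}$ stratifies into Newton strata $\Fl_{G,\mu}^{b}$ indexed by $b\in B(G,\mu)$, and over each stratum the fiber of $\pi_{\mathrm{HT}}$ is (up to the prime-to-$p$ level and a perfectoid cover) the Igusa variety, whose cohomology is computed by the compactly supported cohomology of the local Shimura variety $\cM_{(G,b,\mu),K_p}$ together with the cohomology of an Igusa variety for $J_b$. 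Concretely one has, in the derived category with the $H_{K_p}$-action, a spectral sequence (or a filtration coming from the stratification) whose graded pieces are built from $R\Gamma_c$ of the Newton strata, and each such piece, after applying the comparison of Mantovan type, involves $R\Gamma_c(\cM_{(G,b,\mu),K_p},\overline{\bF}_\ell)$ as a smooth $J_b(\bQ_p)$-representation, smeared over the cohomology of the corresponding Igusa variety.

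Next I would localize everything at $\fm_p$. Because $p$ splits completely, $G_{\bQ_p}$ is a product of a split torus and groups $G_{v_i}$ that are products of $\GL_{n_j}$'s over $\bQ_p$, and $\fm_p$ decomposes accordingly; the unramified parameter $\rho_{\fm_p}$ is by hypothesis generic in the sense of the introduction at every factor. The central point is then: for which $b\in B(G,\mu)$ is the $\fm_p$-localized contribution of the stratum $\Fl_{G,\mu}^b$ nonzero? By the Mantovan-type formula the contribution factors through $R\Gamma_c(\cM_{(G,b,\mu),K_p},\overline{\bF}_\ell)_{\fm_p}$ as a $J_b(\bQ_p)$-module. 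Theorem \ref{local vanishing} tells us that this vanishes identically whenever $J_b$ is \emph{not} quasi-split. Hence only the strata with $J_b$ quasi-split survive after localization. For a quasi-split similitude unitary group of type A as in the hypothesis, the $b$ with $J_b$ quasi-split are precisely the basic one $b=b_0$ (giving the $0$-dimensional, i.e. ``most supersingular-adjacent'' — here actually ordinary — stratum, whose local Shimura variety is the Lubin-Tate/Drinfeld-type space, but with $J_{b}$ an inner form that is quasi-split only in the ``$\mu$-ordinary'' case) and the $\mu$-ordinary $b$; I would check, using the explicit description of $B(G,\mu)$ for $\GL_n$ and the splitness of $p$, that the only stratum with $J_b$ quasi-split is the open ($\mu$-ordinary) one, whose local Shimura variety is $0$-dimensional — so its compactly supported cohomology is concentrated in degree $0$ — while all other strata contribute nothing at $\fm_p$.

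Finally I would run the dimension/perversity bookkeeping. The open stratum in $\Fl_{G,\mu}$ has dimension $d=\dim S_K$, its preimage under $\pi_{\mathrm{HT}}$ has fibers of dimension $0$ after localization, and $\pi_{\mathrm{HT}}$ restricted there is affine (indeed the open Newton stratum of the flag variety is affine, by Caraiani-Scholze); by Artin/affineness vanishing for $R\pi_{\mathrm{HT},*}$ of $\overline{\bF}_\ell$-sheaves one gets that $R\Gamma(S_K,\overline{\bF}_\ell)_{\fm_p}$ is computed by the cohomology of an affine of dimension $d$ with fibers concentrated in nonnegative degrees, forcing $H^i(S_K,\overline{\bF}_\ell)_{\fm_p}=0$ for $i<d$; dually, using $R\pi_{\mathrm{HT},!}$ and the affineness again, $H^i_c(S_K,\overline{\bF}_\ell)_{\fm_p}=0$ for $i>d$. (One must also handle the boundary in the noncompact case of \cite{CS:noncompact}: the boundary strata of the minimal or toroidal compactification contribute via Shimura varieties for proper parabolics, where the same genericity is inherited and the inductive hypothesis or a direct application of the local vanishing to the smaller groups kills the $\fm_p$-part — this is the standard excision argument of \cite{CS:noncompact} and goes through verbatim once the local input is in place.) The main obstacle I anticipate is not the perversity count but the precise identification, in the $\overline{\bF}_\ell$-coefficient setting and with the full Hecke action tracked, of the graded pieces of the Newton stratification with the local-Shimura-variety cohomology so that Theorem \ref{local vanishing} applies cleanly — i.e. making the Mantovan-type formula of Caraiani-Scholze interact correctly with localization at $\fm_p$ and with the Fargues-Scholze normalization of the local Langlands correspondence used in Theorem \ref{local vanishing}; once that bookkeeping is done, everything else is a formal consequence of affineness of the open Newton stratum and the vanishing of the non-quasi-split contributions.
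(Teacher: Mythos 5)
Your first step --- using the Mantovan-type filtration coming from the Newton stratification of the Hodge--Tate period domain and killing every stratum with $J_b$ not quasi-split via Theorem \ref{local vanishing} --- is exactly what the paper does (Proposition \ref{Key 1} plus Theorem \ref{Mantovan}). But your treatment of the surviving $\mu$-ordinary term has the geometry backwards, and this is where the real content of the proof lies. In the Caraiani--Scholze stratification of $\Fl$ the closure relations are \emph{reversed} relative to the Shimura variety: the $\mu$-ordinary stratum is the \emph{closed} stratum $\Fl(\bQ_p)$, a zero-dimensional (profinite) set, and the fibers of $\pi_{HT}$ over it are the $d$-dimensional ordinary Igusa varieties --- not the other way around, as you assert when you speak of ``the open ($\mu$-ordinary) stratum of dimension $d$ with $0$-dimensional fibers.'' Consequently there is no affine open stratum to which Artin vanishing could be applied to produce the bound $i\geq d$; the surviving contribution is $R\Gamma_c([\Fl(\bQ_p)/\underline{K_p}], i^{b_0*}R\pi^{\circ}_{HT*}\ov{\bF}_{\ell})_{\fm_p}$, a stalk restriction to a closed, zero-dimensional locus, and the degree bound has to come from knowing where the cohomology of the $d$-dimensional ordinary Igusa variety sits. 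That input is precisely the semiperversity theorem of Caraiani--Scholze (\cite{CS:noncompact}*{4.6.1}), i.e.\ $R\psi(R\pi^{\circ}_{HT*}\bF_{\ell})\in{}^{p}D^{\geq d}$, which your proposal never invokes even though the paper flags it as the one result it crucially relies on.

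Moreover, even after importing semiperversity there is a second gap: left $t$-exactness of $Ri^{b_0!}$ gives the bound $\geq d$ only for the \emph{costalk} $R\Gamma_c([\Fl(\bQ_p)/\underline{K_p}], Ri^{b_0!}R\pi^{\circ}_{HT*}\ov{\bF}_{\ell})$, whereas the Mantovan formula produces the \emph{stalk} $i^{b_0*}$. Identifying the two after localization at $\fm_p$ is Proposition \ref{Key 2}, and its proof is not bookkeeping: one filters the cone of $Ri^{b_0!}\to i^{b_0*}$ by terms $Ri^{b_0!}i^{b}_!i^{b*}R\pi^{\circ}_{HT*}\ov{\bF}_{\ell}$ for non-ordinary $b$, descends these to $\Bun_{G_{\bQ_p}}$, and kills them by comparing supports of the spectral Bernstein center action (generic on the $\fm_p$-localized side, never generic on anything coming from the non-quasi-split $J_b$, by Lemma \ref{non-quasi-split implies non-generic}). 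Your proposal has no analogue of this step. Finally, your proposed handling of the noncompact boundary by excision and induction over parabolics is also not what is needed: the paper reduces to the good reduction locus via Lemma \ref{cohomology of good reduction locus} and lets the semiperversity statement absorb the boundary, explicitly avoiding any analysis of boundary strata.
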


\begin{thm}\label{compact}
Assume $G$ is anisotropic modulo center. 
Then, Conjecture \ref{conj} holds true: if $H^i (S_K, \overline{\bF}_{\ell})_{\fm_p}\neq 0$, then $i= d$. 
\end{thm}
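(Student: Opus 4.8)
The plan is to reduce to the local vanishing theorem (Theorem \ref{local vanishing}) via the geometry of the Hodge–Tate period map, exactly as in \cites{CS,CS:noncompact}, but feeding in Theorem \ref{local vanishing} at the crucial step instead of reproving the cohomological vanishing by hand. Since $G$ is anisotropic modulo center, the Shimura variety $S_K$ is proper, so $H^i_c(S_K,\ov\bF_\ell)=H^i(S_K,\ov\bF_\ell)$ and the two halves of Conjecture \ref{conj} collapse to the single assertion that the localized cohomology is concentrated in degree $d$. By Poincaré duality (which is compatible with the Hecke action up to the standard involution, and which interchanges $\fm_p$ with a maximal ideal whose parameter is again generic — genericity of $\rho_{\fm_p}$ is preserved under the relevant duality since the condition $\alpha_{j'}/\alpha_j\neq q$ is symmetric and stable under $\alpha_j\mapsto \alpha_j^{-1}q^{n_i-1}$), it suffices to prove the bound in one direction, say $H^i(S_K,\ov\bF_\ell)_{\fm_p}=0$ for $i<d$.

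First I would pass to the perfectoid Shimura variety $\cS_{K^p}$ at infinite level at $p$ and invoke the Hodge–Tate period map $\pi_{\mathrm{HT}}\colon \cS_{K^p}\to \Fl_{G,\mu}$ to the flag variety, together with its compatibility with the Newton stratification of $\Fl_{G,\mu}$ as established in \cites{CS,CS:noncompact}. The completely split hypothesis on $p$ means $G_{\bQ_p}$ is a product of $\bG_m$ and groups $G_{v_i}$ that are products of general linear groups over $F_{v_i}\cong\bQ_p$, so the relevant local Shimura data are precisely of the type covered by Theorem \ref{local vanishing}. The key geometric input I would use from Caraiani–Scholze is that the fibers of $\pi_{\mathrm{HT}}$ over a point in the Newton stratum indexed by $b$ are (cohomologically) controlled by Igusa varieties, and that, after pushing forward along $\pi_{\mathrm{HT}}$ and using the Leray spectral sequence for the Newton stratification, the localized cohomology $H^*(S_K,\ov\bF_\ell)_{\fm_p}$ is built out of the compactly supported cohomologies $R\Gamma_c(\cM_{(G_{v_i},b,\mu),K_{v_i}},\ov\bF_\ell)_{\fm_p}$ of the local Shimura varieties, tensored with cohomology of Igusa varieties, over the various $b$.

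The main step is then: for every non-basic $b$ (equivalently, every $b$ for which $J_b$ is not quasi-split — here I would need that for products of $\GL_n$'s the non-quasi-split $J_b$ are exactly the non-basic ones, which follows from the classification of inner forms of $\GL_n$), Theorem \ref{local vanishing} gives $H^*_c(\cM_{(G_{v_i},b,\mu),K_{v_i}},\ov\bF_\ell)_{\fm_p}=0$, using precisely the genericity of $\rho_{\fm_p}$. Hence only the \emph{basic} stratum contributes to $H^*(S_K,\ov\bF_\ell)_{\fm_p}$. The basic locus is $0$-dimensional in the relevant sense (it is profinite, a union of points), so its contribution — a pushforward of the cohomology of the basic Igusa variety, which is an $H^p_f$-module placed in degree $0$ relative to the base — sits in the cohomological range forced by the perversity/dimension bounds for $\pi_{\mathrm{HT}}$: concretely, the excision/Leray argument shows that the contribution of the $b$-stratum to $H^i(S_K)_{\fm_p}$ vanishes unless $i\geq d - \dim(\text{stratum})$ together with the dual bound, and for the basic stratum both bounds pinch to $i=d$. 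Combining with the vanishing of all non-basic contributions gives $H^i(S_K,\ov\bF_\ell)_{\fm_p}=0$ for $i\neq d$, which is the theorem.

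The part I expect to be the main obstacle is the bookkeeping in the second and third steps: making precise the statement that $R\Gamma(S_K,\ov\bF_\ell)_{\fm_p}$ decomposes (via a spectral sequence whose terms I can control) into local-Shimura-variety cohomology tensored with Igusa-variety cohomology, with the Hecke action at $p$ acting only through the local Shimura variety factor, so that the localization at $\fm_p$ genuinely kills the non-basic terms. This requires the compatibility of the Fargues–Scholze construction of $\rho_{\fm_p}$ — which is what makes "genericity" meaningful on the local side — with the Hecke action coming from the global Shimura variety; here one uses that the unramified Hecke action at $v_i$ on the cohomology of $S_K$ matches the one on $\cM_{(G_{v_i},b,\mu),K_{v_i}}$ through $\pi_{\mathrm{HT}}$, which is essentially built into the Caraiani–Scholze formalism but must be spelled out. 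Once that compatibility is in hand, Theorem \ref{local vanishing} applies verbatim stratum-by-stratum and the rest is the (by now standard) dimension-counting for $\pi_{\mathrm{HT}}$.
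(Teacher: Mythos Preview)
Your overall strategy matches the paper's, but you have misidentified the surviving stratum: it is the \emph{$\mu$-ordinary} element $b_0$, not the basic one. For $\GL_n$, the group $J_b$ is a product of groups $\GL_{m_j}(D_j)$ indexed by the distinct Newton slopes, with $D_j$ the division algebra of the corresponding invariant; hence $J_b$ is quasi-split exactly when every slope is an integer. In $B(G,\mu^{-1})$ for minuscule $\mu$ this singles out the $\mu$-ordinary $b_0$, whereas the basic element has a single (typically non-integral) slope and $J_b$ a genuine inner form. Your parenthetical claim that ``non-quasi-split $J_b$ are exactly the non-basic ones'' is therefore false, and your description of the surviving locus as profinite and $0$-dimensional is in fact the description of the ordinary locus $\Fl(\bQ_p)\hookrightarrow\Fl_C$ (a closed stratum), not the basic locus (which is open of full dimension). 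Compare Proposition~\ref{Key 1}.

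There is also a genuine missing step at the end. Mantovan's formula (Theorem~\ref{Mantovan}) identifies the surviving piece with $R\Gamma_c([\Fl(\bQ_p)/\underline{K_p}],\, i^{b_0*}R\pi^\circ_{HT*}\ov\bF_\ell)_{\fm_p}$, the \emph{stalk} along the ordinary stratum; but perversity (\cite{CS}*{6.1.3} in the compact case) controls the \emph{costalk} $Ri^{b_0!}R\pi^\circ_{HT*}\ov\bF_\ell$, and it is the latter that lies in $D^{\geq d}$ (Corollary~\ref{costalk}). Passing from one to the other is the content of Proposition~\ref{Key 2}: one must show that the cone of $Ri^{b_0!}\to i^{b_0*}$, which is filtered by contributions $Ri^{b_0!}i^b_!i^{b*}R\pi^\circ_{HT*}\ov\bF_\ell$ from non-ordinary $b$ pulled back from $\Bun_{G_{\bQ_p}}$, vanishes after localizing at $\fm_p$. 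This requires a \emph{second} application of the Fargues--Scholze excursion operators together with Lemma~\ref{non-quasi-split implies non-generic}, not just dimension-counting. The paper's proof of Theorem~\ref{compact} explicitly invokes the argument for Theorem~\ref{CS}, which includes this step, and then concludes $i=d$ by Poincar\'e duality.
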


Theorem \ref{CS} (resp. Theorem \ref{compact}) is proved in \cite{CS:noncompact}*{Theorem 1.1} (resp. \cite{CS}) after localizing at a maximal ideal of the global Hecke algebra under some assumptions, and their version of Theorem \ref{CS} plays a very important role in the application to (potential) automorphy \cite{ten authors}. 
Note however that our statement only uses the local Hecke algebra at $p$. Our method uses neither trace formula nor detailed analysis of the cohomology of Igusa varieties and their boundaries, while we still crucially relies on the semiperversity result of Caraiani-Scholze \cite{CS:noncompact}*{Theorem 4.6.1}. 
(This is the only reason we work in the setting of \cite{CS:noncompact}. Once their results on the geometry of Igusa varieties are generalized, Conjecture \ref{conj} can be proved by our method.)

Let us describe the strategy of our proof. 
We start with the following form of Mantovan's formula (see Theorem \ref{Mantovan}): $R\Gamma (S_{K, \ov{\bQ}}, \overline{\bF}_{\ell})_{\fm_p}$ admits a $H_{K_p}$-equivariant filtration (in the derived sense) whose graded pieces are
\begin{multline*}
R\Gamma (\Ig^b, \ov{\bF}_{\ell})^{\op}\otimes^L_{C_c (J_b (\bQ_p))} R\Gamma_c (\cM_{(G, b, \mu), \infty}, \ov{\bF}_{\ell}(d_b))_{\fm_p} [2d_b] \cong \\
(R\Gamma_c (\Ig^b, \ov{\bF}_{\ell}(d_b))^{*})^{\op}\otimes^L_{C_c (J_b (\bQ_p))} R\Gamma_c (\cM_{(G, b, \mu), \infty}, \ov{\bF}_{\ell}(d_b))_{\fm_p}
\end{multline*}
for $b\in B(G_{\bQ_p}, \mu^{-1})$, where $\Ig^b$ denotes the corresponding Igusa variety, $d_b=\dim \Ig^b$,  and $(-)^{*}$ denotes the smooth dual. 

The first step is to use Theorem \ref{local vanishing}. 

\begin{prop}\label{Key 1}
If $b$ is not ordinary, 
\[
R\Gamma (\Ig^b, \overline{\bF}_{\ell})^{\op} \otimes^L_{C_c (J_b (\bQ_p))} R\Gamma_c (\cM_{(G_{\bQ_p}, b, \mu), K_p}, \overline{\bF}_{\ell}(d_b))_{\fm_p}
\cong 
0.
\]
\end{prop}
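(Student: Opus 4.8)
The plan is to reduce Proposition \ref{Key 1} to Theorem \ref{local vanishing} by identifying, for each non-ordinary $b\in B(G_{\bQ_p},\mu^{-1})$, the relevant inner form $J_b$ and checking it fails to be quasi-split. First I would observe that since $p$ splits completely in $F$ and $K_p$ is hyperspecial, the local Shimura datum $(G_{\bQ_p},b,\mu)$ decomposes as a product over the places $v_i$ (the $\bG_m$-factor is harmless), so the cohomology $R\Gamma_c(\cM_{(G_{\bQ_p},b,\mu),K_p},\ov\bF_\ell(d_b))$ and the group $J_b(\bQ_p)=\prod_i J_{b_{v_i}}(F_{v_i})$ factor as external tensor products over $i$. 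Correspondingly $\fm_p$ decomposes as a product of maximal ideals $\fm_{v_i}$ of the $H_{K_{v_i}}$, and genericity of $\rho_{\fm_p}$ is exactly genericity of each $\rho_{\fm_{v_i}}$ in the sense of the local statement. Thus it suffices to prove the vanishing factor by factor, and there $G_{v_i}$ is a product of $\GL_{n}$'s over a $p$-adic field, precisely the setting of Theorem \ref{local vanishing}.

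Next I would pass from the infinite-level/Hecke-module formulation to the statement about $H^i_c(\cM_{(G,b,\mu),K})_\fm$. The derived tensor product $R\Gamma(\Ig^b,\ov\bF_\ell)^{\op}\otimes^L_{C_c(J_b(\bQ_p))}R\Gamma_c(\cM_{(G_{\bQ_p},b,\mu),K_p},\ov\bF_\ell(d_b))_{\fm_p}$ only involves the cohomology of the local Shimura variety as a $J_b(\bQ_p)$-representation; if $H^i_c(\cM_{(G_{\bQ_p},b,\mu),K_p},\ov\bF_\ell)_{\fm_p}=0$ for all $i$ — equivalently $R\Gamma_c(\cM_{(G_{\bQ_p},b,\mu),K_p},\ov\bF_\ell)_{\fm_p}\cong 0$ in the derived category of $H_{K_p}$-modules — then the whole derived tensor product vanishes regardless of the Igusa factor. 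So the content is: for non-ordinary $b$, $J_b$ is not quasi-split, hence Theorem \ref{local vanishing} applies to each local factor. Here I would use that $b$ ordinary means $b$ is the unique basic-free element with $J_b$ quasi-split in each factor, i.e. $J_{b_{v_i}}$ is a product of $\GL$'s (a Levi, in fact $G_{v_i}$ itself when $b$ is the $\mu$-ordinary element) only in the ordinary case; for any other $b$ in $B(G_{\bQ_p},\mu^{-1})$ at least one factor $J_{b_{v_i}}$ is a nontrivial inner form of a product of general linear groups, which is never quasi-split unless it is split, and a non-split inner form of $\GL_n$ (a product of $\GL_m$'s over division algebras) is not quasi-split. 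This last implication is where I would be most careful: I need the precise dictionary, in the $\GL$-case, between elements of $B(G,\mu^{-1})$ (Kottwitz's Newton/Hodge polygons) and the isomorphism type of $J_b$, and the fact that "quasi-split" for these groups is equivalent to "split" is equivalent to "$J_b\cong$ a product of $\GL$'s over the fields $F_{v_i}$", which singles out exactly the ordinary $b$ among those admissible for $\mu$.

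I expect the main obstacle to be bookkeeping rather than a deep new idea: correctly matching the global non-ordinarity hypothesis with the local statement that "$J_b$ is not quasi-split" in \emph{at least one} factor, and making sure that non-quasi-splitness in one factor of a product suffices to kill the cohomology of the whole product. For the latter I would note that $R\Gamma_c(\cM_{(G,b,\mu),K_p},\ov\bF_\ell)_{\fm_p}$ is (up to shift and twist) an external tensor product of the factors' cohomologies, localized at the product maximal ideal, so if one tensor factor is acyclic after localization at $\fm_{v_i}$ the product is acyclic; and Künneth for $C_c(J_b(\bQ_p))=\widehat\bigotimes_i C_c(J_{b_{v_i}}(F_{v_i}))$ respects this. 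A secondary point is the passage between level $K_p$ and infinite level $\infty$ in the Mantovan formula: since $K_p$ is hyperspecial, $H^i_c(\cM_{\infty})^{K_p}\cong H^i_c(\cM_{K_p})$, and localization at $\fm_p\subset H_{K_p}$ of the infinite-level cohomology only sees the $K_p$-invariants, so Theorem \ref{local vanishing} at level $K$ gives exactly what is needed; I would spell this out but expect no difficulty. The upshot: Proposition \ref{Key 1} follows formally from Theorem \ref{local vanishing} applied factorwise, once the combinatorics of $B(G_{\bQ_p},\mu^{-1})$ and the structure of the $J_b$ are laid out.
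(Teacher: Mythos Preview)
Your proposal is correct and follows the same route as the paper: reduce to Theorem~\ref{local vanishing} via the observation that in this PEL setting $J_b$ is quasi-split if and only if $b$ is ordinary (the paper simply cites \cite{CS}*{5.5.4, 5.5.5} for this), together with the remark that the twist $\ov{\bF}_\ell(d_b)$ does not affect the Hecke action. Your decomposition over the places $v_i$ is unnecessary elaboration---Theorem~\ref{local vanishing} is already stated for arbitrary finite products of general linear groups, so it applies directly to $G_{\bQ_p}$ (the $\mathbb{G}_m$ factor being $\GL_1$)---but it is not wrong.
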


\begin{proof}
To apply Theorem \ref{local vanishing}, use that $J_b$ is quasi-split if and only if $b$ is ordinary in the current setting; cf. proofs of \cite{CS}*{5.5.4, 5.5.5}.  Also note that the twist $\ov{\bF}_{\ell}(d_b)$ does not affect the Hecke action. 
\end{proof}

It remains to work with the ordinary term; let $b_0$ denote the ordinary element. 
For this, we recall the Hodge-Tate period map, which is actually hidden in Mantovan's formula. 
Let $\cS_{K^p}$ denote the perfectoid Shimura variety of level $K^p$. 
Let $\cS_{K^p}^{\circ}$ denote the good reduction locus, which lives over the adic generic fiber of the formal completion of the integral model of level $K$. 
Let $\Fl$ denote the flag variety of $G_{\bQ_p}$ associated with $\mu$ (or $\mu^{-1}$, depending on the sing convention). 
We have the $G(\bQ_p)$-equivariant Hodge-Tate period maps \cite{CS}*{2.1.3}:
\[
\pi_{HT}\colon \cS_{K^p} \to \Fl, \quad
\pi_{HT}^{\circ}\colon \cS_{K^p}^{\circ} \to \Fl
\]
as diamonds; the second is just the restriction of the first. 
We also consider their quotients by $K_p$, and use the same notation. 
We now want to show that 
\[
R\Gamma_c ([\Fl (\bQ_p)/\underline{K_p}], i^{b_0*}R\pi^{\circ}_{HT*}\ov{\bF}_{\ell})_{\fm_p}
\]
sits in degree $\geq d$; here, $\Fl (\bQ_p)$ is the ordinary locus and 
\[
i^{b_0}\colon [\Fl(\bQ_p)/ \underline{K_p}] \hookrightarrow [\Fl_C / \underline{K_p}]
\]
is a closed immersion of v-stacks. 
The key result from \cites{CS:noncompact} is

\begin{thm}[\cite{CS:noncompact}]
Let $C$ be any complete algebraically closed nonarchimedean extension of $\bQ_p$. 
Then, $R\pi_{HT*}\ov{\bF}_{\ell}$ belongs to $^{p}D^{\geq d} (\Fl_C, \ov{\bF}_{\ell})$; the precise claim involves integral models and nearby cycles.  
\end{thm}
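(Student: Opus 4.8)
The plan is to reduce the semiperversity to the affineness of Igusa varieties, one Newton stratum at a time. Write $\Fl_C=\bigsqcup_{b\in B(G_{\bQ_p},\mu^{-1})}\Fl^b_C$ for the (finite) Newton stratification of $\Fl_C$, with $j_b\colon\Fl^b_C\hookrightarrow\Fl_C$ the locally closed immersions, and recall the dimension identity $\dim\Fl^b_C=d-d_b$, where $d_b:=\dim\Ig^b$. Since $R\pi_{HT*}\overline{\bF}_\ell$ is constructible with respect to this stratification, the condition $R\pi_{HT*}\overline{\bF}_\ell\in{}^pD^{\ge d}(\Fl_C)$ is equivalent to the family of costalk estimates
\[
j_b^!\,R\pi_{HT*}\overline{\bF}_\ell\in D^{\ge d_b}(\Fl^b_C)
\]
for every $b$. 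So it suffices, for each $b$, to bound from below the degrees in which $j_b^!\,R\pi_{HT*}\overline{\bF}_\ell$ has cohomology.

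To compute such a costalk I would exploit the structure of $\pi_{HT}$ over a single Newton stratum. By the product formula of Mantovan in the form recalled above, together with the fact that the infinite-level Rapoport--Zink space $\cM_{(G,b,\mu),\infty}$ is a $\underline{J_b(\bQ_p)}$-torsor over $\Fl^b_C$, one has $\pi_{HT}^{-1}(\Fl^b_C)\cong\Ig^b\times^{\underline{J_b(\bQ_p)}}\cM_{(G,b,\mu),\infty}$ over $\Fl^b_C$, so that the fibre of the projection to $\Fl^b_C$ over a geometric point is (the relevant cover of) the Igusa variety $\Ig^b$. Using the compatibility of $i^!$ with derived pushforward and base change, and keeping track of the Tate twist and the shift dictated by the transverse codimension of the Newton stratum in question, one identifies $j_b^!\,R\pi_{HT*}\overline{\bF}_\ell$ --- up to that twist and shift --- with a complex on $\Fl^b_C$ whose geometric stalks compute $R\Gamma_c(\Ig^b,\overline{\bF}_\ell)$. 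The required bound then reduces to $R\Gamma_c(\Ig^b,\overline{\bF}_\ell)\in D^{\ge d_b}$, i.e.\ to the vanishing $H^i_c(\Ig^b,\overline{\bF}_\ell)=0$ for $i<d_b$, once one checks that the numerical bookkeeping indeed produces the bound $d_b$ and not something weaker.

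The step I expect to be the main obstacle is this last vanishing, which rests on the \emph{affineness} of $\Ig^b$: since $\Ig^b$ is affine over $\overline{\bF}_p$ of dimension $d_b$ (at each finite level of the Igusa tower), Artin's affine vanishing theorem together with Poincar\'e duality gives $H^i_c(\Ig^b,\overline{\bF}_\ell)=0$ for $i<d_b$. Affineness of Igusa varieties is not formal: it is extracted from the ampleness of the Hodge line bundle on the minimal compactification of the special fibre, which forces the central leaves --- hence the Igusa tower above them --- to be affine once the boundary is removed. In the non-proper case this requires the detailed study of the minimal compactification and of the Igusa varieties and their boundary carried out in \cite{CS:noncompact}, which is precisely why the statement --- and therefore Conjecture \ref{conj} via our method --- is currently available only in that setting.

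Finally, two technical points are responsible for the caveat about ``integral models and nearby cycles''. First, $\cS_{K^p}$ is perfectoid and carries no naive Poincar\'e--Verdier duality, so the duality and base-change manipulations above cannot be carried out literally on the rigid generic fibre; instead one works with the smooth integral model $\mathscr{S}_K$ over $\cO_E$, replaces $R\pi_{HT*}\overline{\bF}_\ell$ by the nearby-cycles sheaf on the Newton-stratified special fibre $\mathscr{S}_{K,\overline{\bF}_p}$ --- using that the Hodge--Tate period map is compatible with specialization --- and runs the stratum-wise argument there, where the Newton strata and Igusa varieties are honest characteristic-$p$ varieties to which Artin vanishing applies. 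Second, in the non-proper case one must additionally control the boundary of the (perfectoid) minimal compactification inside $\cS_{K^p}$ and its image in $\Fl$; invoking once more the structure of the boundary from \cite{CS:noncompact}, these boundary contributions to $R\pi_{HT*}\overline{\bF}_\ell$ also lie in perverse degrees $\ge d$, so the semiperversity of the whole complex follows.
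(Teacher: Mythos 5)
This statement is not proved in the paper at all: it is the external input that the paper explicitly imports from Caraiani--Scholze (\cite{CS:noncompact}*{Theorem 4.6.1}, restated later as Theorem \ref{semiperversity}), and the author stresses that the whole argument ``still crucially relies on the semiperversity result of Caraiani-Scholze.'' So there is no internal proof to compare yours against; what you have written is an attempted reconstruction of the Caraiani--Scholze argument itself. Your outline does identify the correct circle of ideas (Newton stratification of $\Fl$, the product formula, affineness of Igusa varieties plus Artin vanishing, and the passage to formal models and nearby cycles), but as a proof it has real gaps.

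Concretely: (i) $\cM_{(G,b,\mu),\infty}\to\Fl^b_C$ is not a $\underline{J_b(\bQ_p)}$-torsor; it is a torsor under the extension $\widetilde{J}_b$ of $\underline{J_b(\bQ_p)}$ by the ``unipotent'' group $\widetilde{J}_b^0$ of dimension $d_b$ (only the quotient $\cM_\infty/\widetilde{J}^0_{b}\to\Fl^b_C$ is a $\underline{J_b(\bQ_p)}$-torsor), and it is this $\widetilde{J}_b^0$ that produces the twist and shift $(d_b)[2d_b]$, not the ``transverse codimension'' of the stratum. (ii) More seriously, the step where you ``identify $j_b^!\,R\pi_{HT*}\overline{\bF}_\ell$ \dots with a complex whose geometric stalks compute $R\Gamma_c(\Ig^b,\overline{\bF}_\ell)$'' is exactly the hard half of perversity and is not a formal base-change computation: the product formula and proper-ness-free base change compute the \emph{stalks} $j_b^*R\pi_{HT*}\overline{\bF}_\ell\cong R\Gamma(\Ig^b,\overline{\bF}_\ell)$, which gives the \emph{upper} bound $^pD^{\le d}$, whereas the costalk bound you need requires a duality or purity statement along $\Fl^b_C\subset\Fl_C$ that is simply not available on the perfectoid space $\cS_{K^p}$. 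Relegating this to a ``technical point'' about nearby cycles understates it: the entire content of \cite{CS:noncompact}*{\S 4} (partial minimal compactifications, the extension of $\pi_{HT}$ to them, and the analysis of the boundary of the Igusa varieties) is what makes the costalk estimate work in the non-proper case, and it is precisely because these inputs are only established there that the paper restricts Theorem \ref{CS} to that setting. So your sketch is a fair road map of where the theorem comes from, but it should be presented as a citation, not as a proof.
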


We can deduce from this that 
\[
R\Gamma_c ([\Fl (\bQ_p)/ \underline{K_p}], Ri^{b_0!}R\pi^{\circ}_{HT*}\ov{\bF}_{\ell})
\]
lives in degree $\geq d$. 
The final step is to kill the contribution from the non-ordinary loci after localizing at $\fm_p$: 

\begin{prop}\label{Key 2}
A natural map
\[
R\Gamma_c ([\Fl (\bQ_p)/\underline{K_p}], Ri^{b_0!}R\pi^{\circ}_{HT*}\ov{\bF}_{\ell})_{\fm_p} 
\to
R\Gamma_c ([\Fl (\bQ_p)/\underline{K_p}], i^{b_0*}R\pi^{\circ}_{HT*}\ov{\bF}_{\ell})_{\fm_p}. 
\]
is an isomorphism. 
\end{prop}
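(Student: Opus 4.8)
The plan is to analyze the cone of the natural map (the "adjunction triangle'' comparing $Ri^{b_0!}$ and $i^{b_0*}$), and show that this cone vanishes after localization at $\fm_p$. Write $j\colon [\Fl \setminus \Fl(\bQ_p)/\underline{K_p}]\hookrightarrow [\Fl_C/\underline{K_p}]$ for the open complement, which is the union of the non-ordinary Newton strata. For any complex $\cF$ on $[\Fl_C/\underline{K_p}]$ there is a distinguished triangle $Ri^{b_0!}\cF\to i^{b_0*}\cF\to i^{b_0*}Rj_*j^*\cF\to$, and applying $R\Gamma_c([\Fl(\bQ_p)/\underline{K_p}],-)$ (a $H_{K_p}$-equivariant functor) reduces us to proving that
\[
R\Gamma_c\bigl([\Fl(\bQ_p)/\underline{K_p}],\, i^{b_0*}Rj_*j^*R\pi^{\circ}_{HT*}\ov{\bF}_{\ell}\bigr)_{\fm_p}=0.
\]
By proper (or rather, excision-type) reorganization of the stratification, this group has a finite $H_{K_p}$-equivariant filtration whose graded pieces are built from $R\Gamma_c$ along the non-ordinary strata $[\Fl^{b}/\underline{K_p}]$, $b\neq b_0$, with coefficients $i^{b*}R\pi^{\circ}_{HT*}\ov{\bF}_{\ell}$ (up to shifts by the relative dimensions $d_b$). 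So it suffices to show each such stratum contribution vanishes after localizing at $\fm_p$.

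The point is that each non-ordinary stratum contribution is exactly one of the Mantovan graded pieces already killed by Proposition \ref{Key 1}. Concretely: over the stratum $\Fl^{b}$ the fiber of $\pi^{\circ}_{HT}$ is (the good-reduction locus of) the Igusa variety $\Ig^b$, and the Hodge–Tate period map identifies $[\Fl^{b}(\text{with its }\underline{K_p}\text{-action})]$ with the quotient realizing the $(J_b(\bQ_p), H_{K_p})$-bimodule structure. Unwinding the definitions — this is precisely the content of "the Hodge–Tate period map is hidden in Mantovan's formula'' alluded to in the introduction — the cohomology $R\Gamma_c([\Fl^b/\underline{K_p}], i^{b*}R\pi^{\circ}_{HT*}\ov{\bF}_{\ell})$ (after the appropriate Tate twist and shift, which do not affect the $H_{K_p}$-action) is computed by
\[
R\Gamma(\Ig^b,\ov{\bF}_{\ell})^{\op}\otimes^L_{C_c(J_b(\bQ_p))} R\Gamma_c(\cM_{(G_{\bQ_p}, b, \mu), K_p}, \ov{\bF}_{\ell}(d_b)),
\]
and its localization at $\fm_p$ vanishes by Proposition \ref{Key 1}, since $b\neq b_0$ is not ordinary. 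Running this over all $b\neq b_0$ and assembling the filtration gives the claim.

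The main obstacle is the identification in the previous paragraph: one must check carefully that the stalk $i^{b*}R\pi^{\circ}_{HT*}\ov{\bF}_{\ell}$, together with its $\underline{K_p}$-equivariant structure on the stratum, genuinely reproduces the Igusa-variety $\otimes^L_{C_c(J_b(\bQ_p))}$-local-shimura-variety expression of Mantovan's formula, rather than merely something with the same dimensions. This requires knowing that $\pi^{\circ}_{HT}$ restricted to the preimage of $\Fl^{b}$ is the Igusa-variety fibration with the stated $J_b(\bQ_p)$-action, that the $K_p$-action is the expected one, and that passing to $Rj_*j^*$ and then taking the stalk at the boundary is compatible with the $C_c(J_b(\bQ_p))$-module structure used in forming the derived tensor product; in particular one must ensure the relevant (partial) properness so that $R\Gamma_c$ along strata behaves well in the filtration. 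Once this dictionary is in place — it is essentially a bookkeeping exercise using the constructions of \cite{CS} and the formalism of \cite{FS} — the vanishing is immediate from Proposition \ref{Key 1}. A minor additional point is to verify that the filtration one obtains from the stratification is $H_{K_p}$-equivariant in the derived sense, so that localization at $\fm_p$ may be applied termwise; this follows from the $H_{K_p}$-equivariance of $\pi^{\circ}_{HT}$ and of the excision triangles.
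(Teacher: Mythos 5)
There is a genuine gap, and it sits exactly where the real difficulty of the proposition lies. Your reduction to the vanishing of $R\Gamma_c ([\Fl (\bQ_p)/\underline{K_p}], i^{b_0*}Rj_*j^*R\pi^{\circ}_{HT*}\ov{\bF}_{\ell})_{\fm_p}$ via the adjunction triangle is fine, but the next step --- that this object carries a filtration with graded pieces $R\Gamma_c([\Fl^b/\underline{K_p}], i^{b*}R\pi^{\circ}_{HT*}\ov{\bF}_{\ell})$ for non-ordinary $b$ --- is false. Excision gives such a filtration for $R\Gamma_c$ of the \emph{open complement} $U=\Fl_C\setminus \Fl(\bQ_p)$, but $i^{b_0*}Rj_*j^*\cF$ is the restriction to the closed ordinary stratum of the $*$-pushforward from $U$: it computes the cohomology of the ``link'' of $\Fl(\bQ_p)$ inside $\Fl_C$, not $R\Gamma_c(U, j^*\cF)$. (Already for a constant sheaf on a disk with $Z$ the origin these two functors disagree.) What one actually gets, filtering $\cF$ itself by its strata and applying $Ri^{b_0!}$, is that the graded pieces of the cone are $R\Gamma_c([\Fl(\bQ_p)/\underline{K_p}], Ri^{b_0!}i^b_!i^{b*}\cF)$ for $b\neq b_0$ --- the support of the coefficient object is the \emph{ordinary} stratum, not $\Fl^b$.

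This matters because the pairing that then appears is not a Mantovan graded piece for $b$. Descending $Ri^{b_0!}i^b_!i^{b*}R\pi^{\circ}_{HT*}\ov{\bF}_{\ell}$ along $q_{K_p}$ to $\Bun_{G_{\bQ_p}}$ yields an object $Ri^{b_0!}i^b_!V_b$ on $\Bun^{b_0}_{G_{\bQ_p}}$, i.e.\ a complex of $J_{b_0}(\bQ_p)$-representations, and its contribution is
\[
(Ri^{b_0 !}i^b_! V_b)^{\op}\otimes^L_{C_c (J_{b_0}(\bQ_p))} R\Gamma_c (\cM_{(G_{\bQ_p}, b_0,\mu), K_p}, \ov{\bF}_{\ell}(d))_{\fm_p}[2d],
\]
with the local Shimura variety of the \emph{ordinary} $b_0$. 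Since $J_{b_0}$ is quasi-split, Theorem \ref{local vanishing} and hence Proposition \ref{Key 1} give nothing here; your claim that ``the vanishing is immediate from Proposition \ref{Key 1}'' cannot be salvaged. The paper's proof instead uses Verdier duality on $\Bun_{G_{\bQ_p}}$ to rewrite the pairing as an $R\Hom$ into $i^{b_0*}\bD(i^b_!V_b)$, and then compares supports for the spectral Bernstein center: the action on $R\Gamma_c(\cM_{b_0})_{\fm_p}$ is supported at the generic parameter $\rho_{\fm_p}$, while by Lemma \ref{non-quasi-split implies non-generic} nothing built from $i^b_!V_b$ (with $V_b$ a complex of representations of the non-quasi-split $J_b(\bQ_p)$) can be supported at a generic unramified parameter. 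That excursion-operator argument is the actual content of the proposition and is entirely missing from your proposal.
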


This is proved again using the machinery of \cites{FS}. 
There is a $H_{K_p}$-equivariant filtration on $Ri^{b_0!} R\pi^{\circ}_{HT*}\ov{\bF}_{\ell}$ whose graded pieces are 
\[
Ri^{b_0!} i^b_{!}i^{b \, *} R\pi^{\circ}_{HT*}\ov{\bF}_{\ell}
\]
where $i^b\colon [\Fl^b /\underline{K_p}]\hookrightarrow [\Fl  /\underline{K_p}]$ is a locally closed embedding. 
The piece from $b_0$ is exactly $i^{b_0*} R\pi^{\circ}_{HT*}\ov{\bF}_{\ell}$. 
Now, it suffices to control the composite $Ri^{b_0 !} i^b_!$ for a non-odrinary $b$. 
This can be reduced to an analogous problem in the context of the moduli stack $\Bun_{G_{\bQ_p}}$ of $G_{\bQ_p}$-bundles on the Fargues-Fontaine curve studied in \cite{FS}: one can show that $Ri^{b_0 !} i^b_! i^{b*}R\pi^{\circ}_{HT *}$ comes from the corresponding object from $\Bun_{G_{\bQ_p}}$ as $i^{b*}R\pi^{\circ}_{HT *}$ comes from the strata $\Bun_{G_{\bQ_p}}^{b}$ corresponding to $b$. 
Thus, we can write the contribution from $b$ in the form of
\[
(Ri^{b_0 !}i^b_! (-))^{\op}
\otimes^L_{C_c (J_{b_0} (\bQ_p))}
R\Gamma_c (\cM_{(G_{\bQ_p}, b_0,\mu), K_p}, \ov{\bF}_{\ell}(d))_{\fm^p}
\]
for $i^b\colon \Bun_{G_{\bQ_p}}^b \hookrightarrow \Bun_{G_{\bQ_p}}$. 
To show that such contribution vanishes, we look at actions of excursion operators introduced in \cite{FS}: the action on the right term is ``generic'' by the assumption on $\fm_p$, while the action on the left term cannot be ``generic'' as it comes from the non-split group $J_b$.

\subsection*{Acknowledgements}
This work was supported by JSPS KAKENHI Grant Number 20K14284.

\section{Mod $\ell$ representations of inner forms of general linear groups}
Let $F$ be a finite extension of $\bQ_p$ with residue field $\bF_q$. Let $G$ be an inner form of $\GL_n$ over $F$. 
Let $\ell$ be a prime different from the residue characteristic of $F$ and fix $q^{1/2} \in \overline{\bF}_{\ell}$. 
Let $\Rep_{\overline{\bF}_{\ell}} (G)$ denote the category of smooth $\overline{\bF}_{\ell}$-representation of $G(F)$. 
We will recall some results on $\Rep_{\overline{\bF}_{\ell}} (G)$. 

First recall that an irreducible representation $\pi$ of $G(F)$ is called \emph{cuspidal} (resp. \emph{supercuspidal}) if $\pi$ is not a quotient (resp. subquotient) of any properly parabolically induced representation. 
Two notions are different in general. 
The pair $(M, \sigma)$ consisting of a Levi subgroup of $G$ and a supercuspidal irreducible representation $\sigma$ of $M(F)$ is the supercuspidal support of $\pi$ if $\pi$ is a subquotient of the \emph{normalized} parabolic induction $i^G_M \sigma$; such $(M, \sigma)$ always exists. (For the current $G$, the pair $(M, \sigma)$ is known to be unique up to $G$-conjugacy \cite{MS:supercuspidal support}*{Th\'eor\`eme A}.)
\if0
We next state the block decomposition. 
Given $(M, \sigma)$, the inertial supercuspidal class $\Omega=[M, \sigma]_G$ is defined to be the set of pairs $(M', \sigma')$ that is $G$-conjugate to $(M,\sigma \chi)$ for some unramified character $\chi$ of $M(F)$. 
We define the full subcategory $\Rep_{\overline{\bF}_{\ell}}^{\Omega}(G)$ by the following condition: any irreducible subquotient of an object of $\Rep_{\overline{\bF}_{\ell}}^{\Omega}(G)$ is a subquotient of $i^G_{M'}\sigma'$ for some $(M', \sigma')\in \Omega$. 

\begin{thm}[\cite{SS:block decomposition}]\label{block decomposition}
There is a block decomposition
\[
\Rep_{\overline{\bF}_{\ell}} =\prod_{\Omega} \Rep_{\overline{\bF}_{\ell}}^{\Omega}(G). 
\]
\end{thm}

More concretely, any smooth representation $\pi$ decomposes into the direct sum of objects of $\Rep_{\overline{\bF}_{\ell}}^{\Omega}(G)$. 
\fi

We need the following result on supercuspidal representations: 

\begin{thm}[\cite{MS:lift}*{3.27, 3.28}]\label{lift}
Any supercuspidal irreducible $\bF_{\ell}$-representation admits a lift to a supercuspidal irreducible $\bQ_{\ell}$-representation. 
\end{thm}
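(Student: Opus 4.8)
\emph{Sketch of the argument.} The strategy is to pass through the type-theoretic classification of cuspidal representations of inner forms of $\GL_n$ --- established over $\overline{\bQ}_\ell$ by S\'echerre--Stevens and over $\overline{\bF}_\ell$ by Minguez--S\'echerre --- and to lift a type one constituent at a time. A supercuspidal irreducible $\overline{\bF}_\ell$-representation $\pi$ of $G(F)$ is in particular cuspidal, so $\pi\cong\cInd_J^{G(F)}\lambda$ for an \emph{extended maximal simple type} $(J,\lambda)$, where $J\subset G(F)$ is open and compact modulo the centre. Up to conjugacy, $\lambda$ is assembled from a simple character $\theta$ of a compact pro-$p$ subgroup $H^1\subset J$, the associated Heisenberg representation $\eta$ on a pro-$p$ subgroup $J^1$ with $H^1\subset J^1\subset J$, and a representation inflated from an irreducible cuspidal representation $\rho$ of the finite reductive quotient $J/J^1$, which is a general linear group $\GL_m(k')$ over a finite field $k'$; in this dictionary $\pi$ is supercuspidal if and only if $\rho$ is supercuspidal as a representation of $\GL_m(k')$.

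I would then lift each ingredient. Since $H^1$ and $J^1$ are pro-$p$ and $\ell\neq p$, reduction modulo $\ell$ is a bijection between isomorphism classes of irreducible smooth $\overline{\bZ}_\ell$- and $\overline{\bF}_\ell$-representations of each of them; thus $\theta$ and $\eta$ lift canonically to a simple character $\widetilde\theta$ and its Heisenberg representation $\widetilde\eta$ over $\overline{\bQ}_\ell$. For the finite part one uses that every irreducible cuspidal $\overline{\bF}_\ell$-representation of $\GL_m(k')$ is the reduction modulo $\ell$ of an irreducible cuspidal $\overline{\bQ}_\ell$-representation $\widetilde\rho$, attached to a regular character of the multiplicative group of the degree-$m$ extension of $k'$ as in Green's parametrization --- a result essentially due to James and Dipper--James; moreover, over a field of characteristic zero every cuspidal irreducible representation of a finite general linear group is automatically supercuspidal, so $\widetilde\rho$ is supercuspidal. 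Combining $\widetilde\theta$, $\widetilde\eta$ and $\widetilde\rho$ produces an extended maximal simple type $(J,\widetilde\lambda)$ over $\overline{\bQ}_\ell$ whose reduction modulo $\ell$ is $\lambda$.

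Finally, set $\widetilde\pi:=\cInd_J^{G(F)}\widetilde\lambda$. By S\'echerre--Stevens this is an irreducible cuspidal $\overline{\bQ}_\ell$-representation of $G(F)$, and since over a field of characteristic zero every cuspidal irreducible representation of an inner form of $\GL_n$ is supercuspidal, $\widetilde\pi$ is supercuspidal. To check that $\widetilde\pi$ lifts $\pi$: after normalizing the central character to be $\overline{\bZ}_\ell$-integral, the finite-dimensional representation $\widetilde\lambda$ carries a $J$-stable $\overline{\bZ}_\ell$-lattice; compact induction of that lattice is a $G(F)$-stable lattice in $\widetilde\pi$; and since $\cInd_J^{G(F)}$ is exact and commutes with reduction modulo $\ell$, its reduction is $\cInd_J^{G(F)}\lambda=\pi$. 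As $\pi$ is irreducible, this exhibits $\widetilde\pi$ as a lift of $\pi$.

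The step I expect to be the main obstacle is the finite-group input together with its compatibility with the rest of the construction. One must establish that cuspidal $\overline{\bF}_\ell$-representations of finite general linear groups lift to cuspidal representations in characteristic zero --- a statement about $\ell$-modular decomposition numbers --- and, more delicately, that this lift can be performed simultaneously with the automatic lift of the pro-$p$ data so as to produce a genuine extended maximal simple type over $\overline{\bQ}_\ell$. Handling this requires keeping track of endo-classes and of the precise normalization of the Heisenberg representation along $\overline{\bZ}_\ell\to\overline{\bF}_\ell$, as well as verifying that supercuspidality on the $p$-adic side matches supercuspidality of the finite-group datum consistently on both sides of the reduction.
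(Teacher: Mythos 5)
This statement is quoted verbatim from M\'{\i}nguez--S\'{e}cherre \cite{MS:lift}*{3.27, 3.28} and the paper gives no proof of its own; your type-theoretic sketch (lift the pro-$p$ data canonically since $\ell\neq p$, lift the cuspidal representation of the finite $\GL_m(k')$ via James/Dipper--James, reassemble the extended maximal simple type, and compactly induce) is precisely the route taken in that reference. The outline is correct, and the difficulties you flag at the end are exactly the ones handled there.
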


\section{A lemma on $L$-parameters}
Using the results explained in the previous section, we prove a lemma on $L$-parameters. 

Let $F$ be a finite extension of $\bQ_p$ with residue field $\bF_q$. 
Let $G$ be a product of general linear groups over $F$. 
Fix $b \in B(G)$, and let $J_b$ denote the associated algebraic group over $F$ as usual. 

Fix $\ell$ different from $p$ and $q^{1/2}\in \overline{\bZ}_{\ell}$.  
Let $L$ be an algebraically closed field over $\overline{\bZ}_{\ell}$. 
For any irreducible smooth $L$-representation $\pi$ of $J_b (F)$, Fargues-Scholze \cite{FS}*{IX.4.1, IX.7.1} constructed the (semisimple) $L$-parameter well-defined up to $\widehat{J}_b(L)$-conjugacy:
\[
\varphi_{\pi}\colon W_F \to \widehat{J}_b (L) \times W_F \hookrightarrow \widehat{G}(L) \times W_F,  
\]
where the embedding is twisted as in \cite{FS}*{IX.7.1}. 
(Note that $\pi$ is known to be Schur-irreducible.)

Let us recall some properties of $L$-parameters. 
\begin{enumerate}
\item The construction is compatible with products of groups \cite{FS}*{IX.6.2}: if $G=G_1 \times G_2$, $b=(b_1, b_2)$, and $\pi=\pi_1 \boxtimes \pi_2$, then $\varphi_{\pi}=(\varphi_{\pi_1}, \varphi_{\pi_2})$ as homomorphisms to $\widehat{J}_b (L)=\widehat{J}_{b_1} (L)\times \widehat{J}_{b_2} (L)$.  
\if0
\item The construction is compatible with the Weil restriction \cite{FS}*{IX.6.3}: if $G=\Res^{F'}_F G'$, then the $L$-parameter $\varphi_{\pi}$ for $G$ is determined from the one $W_{F'}\to \widehat{G'}\rtimes W_{F'}$ for $G'$. 
\item The construction is compatible with twisting \cite{FS}*{end of IX.6}: if $\chi$ is a character of $G(F)$, $\varphi_{\chi\pi}$ is the twist of $\varphi_{\pi}$ by $\varphi_{\chi}\colon W_F \to Z(\widehat{G})(L)$. 
\fi
\item The construction is compatible with the parabolic induction \cite{FS}*{IX.7.3}: if $\pi$ is a subquotient of a \emph{unnormalized} parabolically induced representation $\textnormal{Ind}^{J_b}_M \sigma$, then $\varphi_{\pi}$ is obtained from $\varphi_{\sigma}$ via the \emph{twisted} embedding $\widehat{M}(L)\hookrightarrow \widehat{J}_b(L)$.   
Equivalently, if $\pi$ is a subquotient of a \emph{normalized} parabolically induced representation $i^{J_b}_M \sigma$, then $\varphi_{\pi}$ is obtained from $\varphi_{\sigma}$ via the \emph{untwisted} embedding $\widehat{M}(L)\hookrightarrow \widehat{J}_b(L)$. 
\item The construction is compatible with the mod $\ell$ reduction \cite{FS}*{IX.5.2}: if $\pi$ is an irreducible $\overline{\bF}_{\ell}$-representation and obtained as the mod $\ell$ reduction of an irreducible $\overline{\bQ}_{\ell}$-representation $\widetilde{\pi}$, $\varphi_{\pi}$ is the mod $\ell$ reduction of $\varphi_{\widetilde{\pi}}$. 
\item For an irreducible $\overline{\bQ}_{\ell}$-representation $\pi$, $\varphi_{\pi}$ is the same as the usual semisimplified $L$-parameter (i.e., it is compatible with the one of Harris-Taylor via the Jacquet-Langlands correspondence if we ignore the monodromy operator); this is \cite{HKW}*{1.0.3}. 
\end{enumerate}

Now we can prove the lemma we need:

\begin{lem}\label{non-quasi-split implies non-generic}
Suppose $J_b$ is not quasi-split. 
If $\pi$ is an irreducible smooth $\bF_{\ell}$-representation of $J_b (F)$, then $\varphi_{\pi}$ is not a generic unramified $L$-parameter. 
\end{lem}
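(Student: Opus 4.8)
The plan is to reduce to a single factor carrying a non-split division algebra, then to a supercuspidal representation, lift to characteristic $0$, and conclude with Zelevinsky's classification of essentially square-integrable representations of $\GL_r(F)$ together with a short computation on Frobenius eigenvalues.

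Since $J_b$ is an inner form of a Levi subgroup of $G$, it is a product $\prod_k \GL_{m_k}(D_k)$ with the $D_k$ central division algebras over $F$, and it is quasi-split exactly when every $D_k$ is split. The hypothesis thus supplies a factor $J_0 := \GL_{m_0}(D_0)$ with $d_0 := \deg D_0 \geq 2$; write $\pi = \boxtimes_k \pi_k$. By compatibility of the construction with products, $\varphi_\pi$ is assembled blockwise from the $\varphi_{\pi_k}$, and a parameter having a ramified summand, or whose Frobenius has two eigenvalues of ratio $q$ inside a single $\GL_{n_i}$-block of $\widehat G$, is not generic unramified in the sense of the Introduction. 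Hence it suffices to show that $\varphi_{\pi_0}$ is ramified or that $\varphi_{\pi_0}(\Frob_F)$ has two eigenvalues with ratio $q$. Replacing $(J_b,\pi)$ by $(J_0,\pi_0)$, I may therefore assume $J_b = \GL_m(D)$ with $\deg D = d \geq 2$ and $n := md$.

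Next I would pass to the supercuspidal support $(M,\sigma)$ of $\pi$, with $M = \prod_j \GL_{m_j}(D)$ and $\sigma = \boxtimes_j \sigma_j$, each $\sigma_j$ supercuspidal on $\GL_{m_j}(D)$. By compatibility with normalized parabolic induction and with products, $\varphi_\pi \cong \bigoplus_j \varphi_{\sigma_j}$ as a representation of $W_F$ on $\overline{\bF}_\ell^{\,n}$, so it is enough to show, for each $j$, that $\varphi_{\sigma_j}$ is ramified or has a Frobenius-eigenvalue pair of ratio $q$. Fix $j$ and set $r := m_j d \geq d \geq 2$. By Theorem~\ref{lift}, lift $\sigma_j$ to a supercuspidal irreducible $\overline{\bQ}_\ell$-representation $\widetilde\sigma_j$; then $\varphi_{\sigma_j}$ is the mod-$\ell$ reduction of $\varphi_{\widetilde\sigma_j}$, and by the identification (\cite{HKW}) of the latter with the usual semisimplified parameter via the Jacquet--Langlands correspondence, $\varphi_{\widetilde\sigma_j}$ is the semisimplification of the Weil--Deligne parameter of $\mathrm{JL}(\widetilde\sigma_j)$. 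As $\widetilde\sigma_j$ is supercuspidal, hence essentially square-integrable, $\mathrm{JL}(\widetilde\sigma_j)$ is an essentially square-integrable representation of $\GL_r(F)$, so by Zelevinsky it is $\mathrm{St}_t(\widetilde\tau)$ for some $t \mid r$ and some supercuspidal $\widetilde\tau$ of $\GL_{r/t}(F)$; hence $\varphi_{\widetilde\sigma_j} \cong \bigoplus_{a=0}^{t-1} \varphi_{\widetilde\tau}\, |\,\cdot\,|^{(t-1)/2 - a}$ with $\varphi_{\widetilde\tau}$ irreducible of dimension $r/t$ (the fixed $q^{1/2}$ entering here).

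Finally I would split into cases, assuming in both that $\varphi_{\sigma_j}$ is unramified (otherwise there is nothing to prove). If $t \geq 2$, then $\overline{\varphi_{\widetilde\tau}}$ is unramified and, for any of its Frobenius eigenvalues $\beta$, the reduction $\varphi_{\sigma_j}$ contains the eigenvalues $\beta q^{(t-1)/2}$ and $\beta q^{(t-1)/2 - 1}$, of ratio $q$, and we are done. If $t = 1$, then $\varphi_{\widetilde\tau} = \varphi_{\widetilde\sigma_j}$ is irreducible of dimension $r \geq 2$, hence ramified over $\overline{\bQ}_\ell$, while by assumption its reduction $\varphi_{\sigma_j}$ is unramified; so the finite group $\varphi_{\widetilde\tau}(I_F)$ has trivial semisimple reduction mod $\ell$, hence is an $\ell$-group, hence (as $\ell \neq p$) trivial on wild inertia, and $\varphi_{\widetilde\tau}$ is tame. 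A tame irreducible $r$-dimensional representation of $W_F$ has the form $\mathrm{Ind}_{W_{F_r}}^{W_F}\psi$ with $F_r/F$ unramified of degree $r$ and $\psi$ tame, and the $\ell$-group condition forces $\psi|_{I_F}$ to have order $\ell^a$ for some $a \geq 1$ with $q$ of multiplicative order exactly $r$ modulo $\ell^a$. Reducing mod $\ell$, $\varphi_{\sigma_j}(\Frob_F)$ has characteristic polynomial $X^r - \beta$ for some $\beta \in \overline{\bF}_\ell^\times$, whose set of eigenvalue-ratios contains $\mu_{r'}(\overline{\bF}_\ell) \setminus \{1\}$, with $r'$ the prime-to-$\ell$ part of $r$, and contains $1$ whenever $\ell \mid r$. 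Since the order $r_1$ of $q$ in $\overline{\bF}_\ell^\times$ divides $r$ and is prime to $\ell$, it divides $r'$, so $q \in \mu_{r'}(\overline{\bF}_\ell)$; if $q \neq 1$ this is the desired ratio, and if $q = 1$ then $r_1 = 1$, which together with $q$ having order $r$ modulo $\ell^a$ forces $r$ to be a power of $\ell$, hence $\ell \mid r$, so $1 = q$ occurs as a ratio. I expect the $t = 1$ subcase to be the real obstacle: one must rule out that the mod-$\ell$ reduction of an irreducible ramified parameter is an unramified \emph{generic} parameter, and this is exactly where the eigenvalue computation above is needed; the remaining steps are formal given the recalled properties of the Fargues--Scholze construction and Theorem~\ref{lift}.
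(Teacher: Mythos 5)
Your proof is correct, and its skeleton coincides with the paper's: reduce to a single $\GL_m(D)$-factor with $D$ nonsplit, pass to the supercuspidal support, lift via Theorem \ref{lift}, and identify the characteristic-zero parameter through the Fargues--Scholze compatibilities and \cite{HKW} as a generalized Steinberg parameter $\bigoplus_{a}\varphi_{\widetilde\tau}|\cdot|^{(t-1)/2-a}$. Where you genuinely diverge is in the decisive step. The paper invokes the Caraiani--Scholze rigidity statement (\cite{CS}*{6.2.2}, \cite{CS:noncompact}*{proof of 5.1.3}): if the mod-$\ell$ reduction of an integral parameter is generic unramified, the characteristic-zero parameter itself splits as a sum of characters with non-cyclotomic ratios, which no supercuspidal of $\GL_m(D)$ can produce. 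You instead prove the needed incompatibility by hand: the $t\geq 2$ case yields a ratio $q$ directly, and in the $t=1$ case you show that an irreducible $r$-dimensional parameter ($r\geq 2$) with unramified reduction must be tame, induced from the unramified degree-$r$ extension by a character whose inertial restriction has $\ell$-power order, and then the arithmetic of $\mathrm{ord}_{\ell^a}(q)=r$ forces $q$ (or $1$, when $q\equiv 1 \bmod \ell$ and $\ell\mid r$) to occur as a ratio of Frobenius eigenvalues of the reduction. I checked this computation, including the edge cases $q\equiv 1\pmod\ell$ and $\ell\mid r$, and it is sound. What your route buys is self-containedness: you do not need the deformation-theoretic input from Caraiani--Scholze, only standard facts about tame irreducible representations of $W_F$; what it costs is length, and it is specific to $\GL_n$ (as is the whole lemma), whereas the cited CS lemma is the form of the statement that generalizes.
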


\begin{proof}
Write $G=\prod_i \GL_{n_i}$, $b=(b_i)$, $J_b =J_{b_i}$, and $\pi=\boxtimes_i \pi_i$. 
For some $i$, $J_{b_i}$ is not quasi-split by the assumption. 
By the definition of genericity and properties of $L$-parameters, we may assume that $G=\GL_n$. 

Assume $\varphi_{\pi}$ is a generic unramified $L$-parameter. 
Take the supercuspidal support $(M, \sigma)$ of $\pi$, and a lift $\widetilde{\sigma}$ of $\sigma$ using Theorem \ref{lift}. 
The $L$-parameter
\[
\varphi_{\widetilde{\sigma}}\colon W_F \to \widehat{M}(\overline{\bQ}_{\ell}) \hookrightarrow \GL_n(\overline{\bQ}_{\ell})
\]
with the untwisted embedding, factors through $\GL_n(\overline{\bZ}_{\ell})$, and its reduction to $\GL_n(\overline{\bF}_{\ell})$ recovers $\varphi_{\pi}$. 
By the genericity, we have a decomposition $\varphi_{\widetilde{\sigma}}=\oplus_{i=1}^n \chi_i$ into characters $\chi_1, \dots, \chi_n$ and $\chi_i / \chi_j$ is not the cyclotomic character for any $i\neq j$; cf. \cite{CS}*{6.2.2} and \cite{CS:noncompact}*{proof of 5.1.3}. 
Since $J_b$ is not quasi-split, $M$ has a factor of the form of $\GL_m(D)$ for some central division $F$-algebra $D$ of dimension $>1$ and $m\geq 1$. Thus, no such representation $\widetilde{\sigma}$ exists. 
\end{proof}

\section{Proof of Theorem \ref{local vanishing}}
We shall use Hecke operators and excursion operators. 
Let us first recall the description of the cohomology of local Shimura varieties from \cite{FS}*{IX.3}. 
Let $r_\mu$ denote the representation of $\widehat{G}$ whose highest weight is conjugate to the cocharacter corresponding to $\mu$, and set $d=\dim r_{\mu}$. 
Let $i^1$ (resp. $i^b$) denote the immersion $\Bun^1_G \hookrightarrow \Bun_G$ (resp. $\Bun^b_G \hookrightarrow \Bun_G$).   
Then, there is an identification
\[
R\Gamma_c (\cM_{(G,b, \mu), K}, \bZ_{\ell}[q^{1/2}])[d](d/2) \cong 
i^{b \, *}T_{r_{\mu}} (i^1_! \cInd^{G(F)}_{K}\bZ_{\ell}[q^{1/2}])
\]
as representations of $J_b (F)$, and each $H^i_c$ is a finitely generated smooth representation of $J_b (\bQ_p)$ \cite{FS}*{IX.3.1}. 
Moreover, this identification is compatible with \emph{right} actions of the Hecke algebra $H_K$. 

Given $\fm \subset H_K$, it induces 
\[
R\Gamma_c (\cM_{(G,b, \mu), K}, \bZ_{\ell}[q^{1/2}])_{\fm}[d](d/2) \cong 
i^{b \, *}T_{r_{\mu}} (i^1_! (\cInd^{G(F)}_{K}\bZ_{\ell}[q^{1/2}])_{\fm})
\]
as $i^{b*}$, $T_{r_{\mu}}$, and $i^1_!$ commute with colimits.  
%Both are left adjoint functors. 

Let $\pi$ be an irreducible subquotient of $H^i_c (\cM_{(G,b, \mu), K}, \overline{\bF}_{\ell})_{\fm}$. 
Once we prove that $\varphi_{\pi}=\rho_{\fm}$, we can finish by Lemma \ref{non-quasi-split implies non-generic}. 
To identify $\varphi_{\pi}$, we shall look at excursion operators. Let $\cD=(I, V, \alpha, \beta, (\gamma_i)_{i \in I})$ be an excursion datum \cite{FS}*{VIII.4.2}. 
We need to show that the associated excursion operator $S_{\cD}$ acts on $\pi$ by the scalar
\[
\overline{\bF}_{\ell}\xrightarrow{\alpha} V \xrightarrow{(\varphi_{\fm} (\gamma_i))_{i\in I}} V \xrightarrow{\beta} \overline{\bF}_{\ell}, 
\]
where we are implicitly taking the image of $\gamma_i$ in $\Frob^{\bZ}_F$. 

We will actually work with the spectral Bernstein center $\cZ^{\spec} (G, \bZ_{\ell}[q^{1/2}])$ of $G$ \cite{FS}*{IX.0.2}. 
As $\ell$ is very good for $\widehat{G}$ in their sense, $\cZ^{\spec} (G, \bZ_{\ell}[q^{1/2}])$ acts on $D_{\lis}(\Bun_G, \bZ_{\ell}[q^{1/2}])$ \cite{FS}*{IX.5.2}, and the action is compatible with excursion operators. 
Recall that $\rho_{\fm}$ corresponds to the character $\lambda_{\fm} \colon H_K \to \overline{\bF}_{\ell}$.  
On the other hand, the action of $\cZ^{\spec} (G, \bZ_{\ell}[q^{1/2}])$ on $\cInd^{G(F)}_K \bZ_{\ell}[q^{1/2}]$ induces a natural map
\[
\cZ^{\spec} (G, \bZ_{\ell}[q^{1/2}]) \to \End_{G(F)} (\cInd^{G(F)}_K \bZ_{\ell}[q^{1/2}])= H_K^{\op}. 
\]
(Beware that $\cInd^{G(F)}_K \bZ_{\ell}[q^{1/2}]$ is being regarded as a \emph{right} module of $H_K$.)
Composing it with an involution $KhK \mapsto  Kh^{-1}K$ of $H_K$, we obtain a map
\[
\cZ^{\spec} (G, \bZ_{\ell}[q^{1/2}]) \to H_K;
\]
this is the map compatible with usual $L$-parameters for unramified irreducible representations as $L$-parameters of Fargues-Scholze agree with the usual one. 
Now, $\lambda_{\fm}$ determines the localization
\[
\cZ^{\spec} (G, \bZ_{\ell}[q^{1/2}])\to \cZ^{\spec} (G, \bZ_{\ell}[q^{1/2}])_{\fm}. 
\] 
It suffices to show that 
\begin{quote}
the action of $\cZ^{\spec} (G, \bZ_{\ell}[q^{1/2}])$ on 
$i^{b \, *}T_{r_{\mu}} (i^1_! (\cInd^{G(F)}_{K}\bZ_{\ell}[q^{1/2}])_{\fm})$ factors through $\cZ^{\spec} (G, \bZ_{\ell}[q^{1/2}])_{\fm}$. 
\end{quote}
As the action of $\cZ^{\spec} (G, \bZ_{\ell}[q^{1/2}])$ commute with excursion operators \cite{FS}*{IX.5.2}, we need only observe that 
\begin{quote}
the action of $\cZ^{\spec} (G, \bZ_{\ell}[q^{1/2}])$ on $(\cInd^{G(F)}_{K}\bZ_{\ell}[q^{1/2}])_{\fm}$ factors through $\cZ^{\spec} (G, \bZ_{\ell}[q^{1/2}])_{\fm}$. 
\end{quote}
This completes the proof of Theorem \ref{local vanishing}. 

\section{Shimura varieties of PEL type and the Hodge-Tate period map}
We start the discussion on Shimura varieties. 
Let $(\cO_B, *, \Lambda, (\cdot, \cdot))$ be an integral PEL datum of type A or C unramified at $p$; cf.~\cite{CS}*{4.3}. Let $(G, X)$ denote the associated Shimura datum with the reflex field $E\subset \bC$. 
We write $S_K$ for the Shimura variety of level $K\subset G(\bA_f)$. 
Fix a place $\fp$ of $E$ above $p$ and an embedding $\overline{\bQ}\to \overline{\bQ}_p$ that induces $\fp$. 
Let $\cS_K$ denote the adic space over $E_{\fp}$ associated with $S_{K, E_{\fp}}$. 
Suppose for the moment that $K=K^p K_p$ and $K_p=G(\bZ_p)$. 
The Shimura variety has the canonical integral model $\mathscr{S}_K$ over $\cO_{E, \fp}$. 
Let $\cS^{\circ}_{K}\hookrightarrow \cS_K$ denote the open immersion from the adic generic fiber of the $p$-adic completion $\mathscr{S}_K^{\wedge}$ of $\mathscr{S}_K$; this is the locus of ``good reduction''.  
For deeper $K$, we take the inverse image to define $\cS^{\circ}_{K}\hookrightarrow \cS_K$; the tower $\{\cS_K^{\circ}\}_{K}$ is stable under the Hecke action.   
We also consider the associated diamonds \cites{Scholze:Berkeley, Scholze:diamond}. 

\begin{lem}\label{cohomology of good reduction locus}
Let $C$ be a complete algebraically closed nonarchimedean field over $\bQ_p$. the natural map
\[
R\Gamma (S_{K, \overline{\bQ}}, \overline{\bF}_{\ell}) \to 
R\Gamma (\cS_{K, C}^{\circ}, \ov{\bF}_{\ell})
\]
is an isomorphism for $\ell\neq p$. 
\end{lem}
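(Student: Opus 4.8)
The plan is to identify both sides with the \'etale cohomology of the special fibre $\mathscr{S}_{K,\ov{\bF}_p}$ of the canonical integral model, and then to check that the natural map respects these identifications. Throughout we take $K_p=G(\bZ_p)$, which is the only case needed in the applications, so that $\mathscr{S}_K$ is smooth over $\cO_{E,\fp}$; the tame level $K^p$ may be arbitrary and plays no role below.

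First I would dispose of the left-hand side. By Huber's comparison between the \'etale cohomology of a scheme of finite type over a field and that of the associated adic space, together with the invariance of torsion \'etale cohomology under an extension of algebraically closed base fields (applied to $\ov{\bQ}\hookrightarrow\ov{\bQ}_p\hookrightarrow C$), the map in question is identified with the restriction map
\[
R\Gamma(\cS_{K,C},\ov{\bF}_\ell)\longrightarrow R\Gamma(\cS^{\circ}_{K,C},\ov{\bF}_\ell)
\]
along the open immersion $\cS^{\circ}_{K}\hookrightarrow\cS_K$, so it suffices to prove this restriction map is an isomorphism.

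Since $\mathscr{S}_K/\cO_{E,\fp}$ is smooth, the nearby cycles of $\ov{\bF}_\ell$ along this model are constant, whence $R\Gamma(\cS^{\circ}_{K,C},\ov{\bF}_\ell)\cong R\Gamma(\mathscr{S}_{K,\ov{\bF}_p},\ov{\bF}_\ell)$. For $R\Gamma(\cS_{K,C},\ov{\bF}_\ell)$ I would pass to a smooth proper toroidal compactification $\mathscr{S}^{\Sigma}_K$ of $\mathscr{S}_K$ over $\cO_{E,\fp}$ whose boundary $\mathscr{D}$ is a relative normal crossings divisor; such a model exists because the PEL datum is unramified at $p$ (Lan). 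Let $i\colon\mathscr{D}\hookrightarrow\mathscr{S}^{\Sigma}_K$ denote the inclusion of the boundary, and use the same letter for the induced closed immersions on adic generic fibres over $C$ and on special fibres, writing $\cD$ for the adic generic fibre of $\mathscr{D}$. The localization triangle attached to $i$ yields an excision triangle
\[
R\Gamma(\cD_{C},Ri^{!}\ov{\bF}_\ell)\longrightarrow R\Gamma(\cS^{\Sigma}_{K,C},\ov{\bF}_\ell)\longrightarrow R\Gamma(\cS_{K,C},\ov{\bF}_\ell)
\]
and, over $\ov{\bF}_p$, the analogous triangle with $\mathscr{S}^{\Sigma}_{K,\ov{\bF}_p}$, $\mathscr{S}_{K,\ov{\bF}_p}$, $\mathscr{D}_{\ov{\bF}_p}$ in the corresponding roles. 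The middle terms agree, by smooth and proper base change over $\cO_{E,\fp}$ (and Huber's comparison): $R\Gamma(\cS^{\Sigma}_{K,C},\ov{\bF}_\ell)\cong R\Gamma(\mathscr{S}^{\Sigma}_{K,\ov{\bF}_p},\ov{\bF}_\ell)$. To match the outer left terms compatibly, I would use that $\mathscr{D}$ is proper over $\cO_{E,\fp}$ and that nearby cycles commute with the closed immersion $i$ and with Verdier duality (Gabber's theorem, as $\ell\neq p$), hence with $Ri^{!}$; this gives $R\Gamma(\cD_{C},Ri^{!}\ov{\bF}_\ell)\cong R\Gamma(\mathscr{D}_{\ov{\bF}_p},Ri^{!}\ov{\bF}_\ell)$. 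Comparing the two triangles then identifies $R\Gamma(\cS_{K,C},\ov{\bF}_\ell)$ with $R\Gamma(\mathscr{S}_{K,\ov{\bF}_p},\ov{\bF}_\ell)$, and tracing through the specialization maps shows that, under the identifications above, this is precisely the restriction map to $R\Gamma(\cS^{\circ}_{K,C},\ov{\bF}_\ell)$, which is therefore an isomorphism.

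The hard part is the boundary contribution: controlling $Ri^{!}\ov{\bF}_\ell$ and its behaviour under nearby cycles. This is exactly where a \emph{smooth proper} integral toroidal compactification with relative normal crossings boundary is essential, together with the compatibility of nearby cycles with the six operations (equivalently, a log-smooth base change/purity statement along the boundary). If one wishes to avoid Gabber's duality theorem, one may instead stratify $\mathscr{D}$ into the smooth locally closed strata cut out by the normal crossings divisor, apply absolute purity on each stratum, and reassemble by a spectral sequence, with the same conclusion. The remaining point --- that the chain of identifications is compatible with the canonical restriction map --- is a routine but slightly tedious diagram chase with the specialization maps for $\mathscr{S}_K$ and $\mathscr{S}^{\Sigma}_K$.
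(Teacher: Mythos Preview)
Your argument is correct and amounts to a reconstruction of what lies behind the paper's one-line proof, which simply cites Lan--Stroh and Huber (via \cite{CS:noncompact}*{2.6.4}): pass to a smooth proper toroidal compactification with relative normal crossings boundary over $\cO_{E,\fp}$, and compare the localization triangles on the generic and special fibres using the triviality of nearby cycles along the smooth model together with the commutation of $R\psi$ with $Ri^!$ (via Gabber's duality, or purity along the boundary strata). Your restriction to hyperspecial $K_p$ is harmless: the general case follows by finite \'etale pushforward, and the infinite-level case actually used in the proof of Theorem~\ref{Mantovan} then follows by passing to the colimit.
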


\begin{proof}
This follows from results of Lan-Stroh \cite{Lan-Stroh}*{5.20} and Huber; see \cite{CS:noncompact}*{2.6.4}. 
\end{proof}

We now define perfectoid Shimura varieties. 
Set
\[
\cS_{K^p}:= \varprojlim_{K_p} \cS_{K^p K_p}^{\diamondsuit}, \quad
\cS_{K^p}^{\circ}:= \varprojlim_{K_p} \cS^{\circ, \diamondsuit}_{K^p K_p} \hookrightarrow \cS_{K^p}; 
\]
both are actually perfectoid spaces after base change to a perfectoid field. 
Let $\Fl$ denote the flag variety of $G(E_{\fp})$ associated with the cocharacter $\mu$ (or, perhaps better, its inverse) determined by $h\in X$; cf. \cite{CS}*{2.1}. 
We regard it as a diamond over $\Spd (E_{\fp})$. 

\begin{thm}[\cite{CS}*{2.1.3}]
There is a $G(\bQ_p)$-equivariant morphism
\[
\pi_{HT}\colon \cS_{K^p}\to \Fl
\]
with a specific construction. 
This is called the Hodge-Tate period map. 
\end{thm}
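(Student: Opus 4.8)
The plan is to recall the construction of \cite{CS}*{2.1.3}, since the statement is a packaging of relative $p$-adic Hodge theory for the universal abelian scheme. At each finite level $K^pK_p$ with $K_p=G(\bZ_p)$ one has the universal abelian variety $A$ over $\cS_{K^pK_p}$ (up to prime-to-$p$ isogeny) carrying the structure coming from the integral PEL datum $(\cO_B,*,\Lambda,(\cdot,\cdot))$ together with its $K_p$-level structure at $p$; since $p$ is unramified and $K_p$ hyperspecial, the $p$-adic Tate module $T_pA$ is a $\bZ_p$-local system on the integral model. Passing to the perfectoid inverse limit $\cS_{K^p}=\varprojlim_{K_p}\cS_{K^pK_p}^{\diamondsuit}$ amounts precisely to trivializing the level structure at $p$: there is a canonical isomorphism $T_pA\cong\underline{\Lambda}$ of $\bZ_p$-local systems on $\cS_{K^p}$, compatible with the $\cO_B$-action and the pairing.

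The second step is the Hodge--Tate sequence. After base change to a perfectoid field, so that $\cS_{K^p}$ becomes a perfectoid space, relative $p$-adic Hodge theory for $A$ (cf.\ the theory of $p$-divisible groups over perfectoid rings) yields a short exact sequence of $\cO_{\cS_{K^p}}$-modules
\[
0 \to \mathrm{Lie}(A)\otimes_{\cO_{\cS_{K^p}}}\cO_{\cS_{K^p}}(1) \to T_pA\otimes_{\bZ_p}\cO_{\cS_{K^p}} \to (\mathrm{Lie}\,A^{\vee})^{\vee}\otimes_{\cO_{\cS_{K^p}}}\cO_{\cS_{K^p}} \to 0,
\]
in which the sub-bundle is locally a direct summand, fiberwise of the rank prescribed by $\mu$, stable under the $\cO_B$-action, and compatible with the polarization; it is therefore a filtration of $T_pA\otimes\cO_{\cS_{K^p}}$ of type $\mu$ compatible with the $G$-structure. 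Transporting it along the trivialization gives, functorially in each affinoid perfectoid $\Spa(R,R^+)\to\cS_{K^p}$ --- whose structure map to $\Spd(E_{\fp})$ supplies an untilt $R^{\sharp}$ over $E_{\fp}$ --- a filtration of $\underline{\Lambda}\otimes R^{\sharp}$ of the prescribed type, i.e.\ an $R^{\sharp}$-point of the flag variety $\Fl$ of $G$ associated with $\mu$ (or $\mu^{-1}$). Gluing produces the morphism $\pi_{HT}\colon\cS_{K^p}\to\Fl$ of diamonds over $\Spd(E_{\fp})$, and the same recipe restricted to $\cS_{K^p}^{\circ}\subset\cS_{K^p}$ gives $\pi_{HT}^{\circ}$.

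For $G(\bQ_p)$-equivariance: an element $g\in G(\bQ_p)$ acts on $\cS_{K^p}$ by modifying the $K_p$-level structure, and under the trivialization of $T_pA[1/p]$ this is intertwined with the standard linear action of $G(\bQ_p)$ on $\Lambda\otimes\bQ_p$. Since the Hodge--Tate filtration is intrinsic to $A$ and $G(\bQ_p)$ acts on $\Fl$ through the very same linear action on $\underline{\Lambda}\otimes\cO$, the relevant square commutes, so $\pi_{HT}$ is equivariant.

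The main obstacle is the relative $p$-adic Hodge theory input of the second step: one needs the Hodge--Tate sequence over the (non-smooth) perfectoid base $\cS_{K^p}$ with the Hodge--Tate filtrant a genuine sub-bundle of the expected rank and locally a direct summand --- this is where Scholze's work on perfectoid spaces and on $p$-divisible groups is essential --- together with the identification of the diamond $\Fl$ with the moduli functor of $\mu$-filtrations compatible with the $G$-structure. A secondary, purely bookkeeping point is propagating the $\cO_B$-action and the polarization through the whole construction so that the image lands in the flag variety of $G$ rather than of $\GL$. Since all of this is carried out in \cite{CS}*{2.1}, in practice I would simply cite it.
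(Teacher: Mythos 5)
The paper gives no proof of this statement: it is recalled verbatim from \cite{CS}*{2.1.3}, and your sketch (trivializing $T_pA$ over the perfectoid limit, the relative Hodge--Tate filtration, and equivariance via the level-structure action) is an accurate summary of the construction there, ending with the same citation. So your proposal is correct and matches the paper's approach.
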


We write $\pi_{HT}^{\circ}$ for the restriction of $\pi_{HT}$ to $\cS_{K^p}^{\circ}$. The restriction $\pi_{HT}^{\circ}$ is quasicompact and quasiseparated. 

\section{Newton stratifications and Igusa varieties}
It is important to study the Hodge-Tate period map using Newton stratifications. 
There are two types of stratifications. 
We fix a complete algebraically closed nonarchimedean extension $C$ of $\ov{\bQ}_p \supset E_{\fp}$.  

The first stratification is classical, and is induced from the Newton stratification of the geometric special fiber $\ov{\mathscr{S}}_{K}$ over $\ov{\bF}_p$ of the integral model: 
for each $b\in B(G_{\bQ_p, \mu^{-1}})$, the inverse images of the stratum $\ov{\mathscr{S}}_K^b$ along the specialization maps define locally closed subspaces
\[
\cS_{K, C}^{\circ b} \subset \cS_{K, C}^{\circ}, \quad \cS_{K^p, C}^{\circ b} \subset \cS_{K^p, C}^{\circ}. 
\]
Note that
\[
\cS_{K,C}^{\circ \geq b}:= \bigcup_{b' \geq b} \cS_{K, C}^{\circ b'}
=\cS \setminus\bigcup_{b' \not\geq b} \cS_{K, C}^{\circ b'}
\]
is a quasicompact open subspace of $\cS_{K, C}^{\circ}$ for every $b$. 
In particular, for $\mu$-ordinary $b_0$, the unique maximal element, $\cS_{K, C}^{\circ b_0}$, $\cS_{K^p, C}^{\circ b_0}$ are open strata. 

On the other hand, Caraiani-Scholze defined the Newton stratification of $\Fl_C$ with the reverse direction \cite{CS}*{Section 3}, e.g., $\Fl^{b_0}_C$ is a closed stratum. 
In fact, it is induced from the stratification of $\Bun_{G_{\bQ_p}}$ of the Fargues-Fontaine curve. 
Let us recall this description.
The flag variety $\Fl_C$ as a diamond over $\Spd (C)$ has the following interpretation: it sends a perfectoid space $S$ over $C^{\flat}$ to the set of isomorphism classes of 
modification of the trivial $G_{\bQ_p}$-bundle $\cE_1$ on the relative Fargues-Fontaine curve $X_{S}$ bounded by $\mu$, i.e., pairs
\[
(\cE, i\colon \cE_1 \to \cE)
\]
of a $G_{\bQ_p}$-bundle $\cE$ and a modification $i$ that is bounded by $\mu$ along the divisor determined the structure map $\Fl_C\to \Spd (C)$. 
There is a natural map 
\[
\Fl_C \to \Bun_{G_{\bQ_p}} ; \quad (\cE, i) \to \cE
\]
to the moduli stack $\Bun_{G_{\bQ_p}}$ of $G_{\bQ_p}$-bundles \cite{FS}*{III}. 
The moduli stack $\Bun_{G_{\bQ_p}}$ is an $\ell$-cohomologically smooth Artin v-stack of dimension 0 \cite{FS}*{IV.1.19}.  
As explained in \cite{FS}*{III}, each $b \in B(G_{\bQ_p})$ gives rise to a locally closed subfunctor
\[
\Bun_{G_{\bQ_p}}^b \hookrightarrow \Bun_{G_{\bQ_p}}
\]
classifying $G$-bundles of type $b \in B(G_{\bQ_p})$. In fact, this defines a stratification of $\Bun_{G_{\bQ_p}}$. 
The stratification of $\Fl_C$ is induced from that of $\Bun_{G_{\bQ_p}}$ via the map $\Fl_C\to \Bun_{G_{\bQ_p}}$ as is clear from the definition in \cite{CS}*{3.5.6}. 
In particular, $\Fl^b$ is a locally closed locally spatial subdiamond of $\Fl_C$; it was merely a locally closed topological subset in \cites{CS}. 

The following lemma is useful later:

\begin{lem}[\cite{Hansen:middle}*{2.10}]\label{map to BunG}
Fix $\ell\neq p$. 
The map $\Fl_C \to \Bun_G$ factors through a map
\[
q\colon [\Fl_C / \underline{G(E_{\fp})}] \to \Bun_G, 
\]
and $q$ is $\ell$-cohomologically smooth. 
Moreover, for any open compact subgroup $K_p \subset G(\bQ_p)$, the composite
\[
q_K \colon [\Fl_C / \underline{K_p}] \to [\Fl_C / \underline{G(E_{\fp})}] \xrightarrow{q} \Bun_G
\]
is $\ell$-cohomologically smooth. 
\end{lem}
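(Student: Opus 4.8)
The plan is to deduce the statement from two structural results of Fargues--Scholze --- the $\ell$-cohomological smoothness of one leg of a Hecke correspondence, and the openness of basic Newton strata in $\Bun_G$ --- after first disposing of the factorization. The $\underline{G(E_{\fp})}$-action on $\Fl_C$ is the one induced by the automorphisms of the trivial $G$-bundle $\cE_1$ on the Fargues--Fontaine curve, under the identification $\underline{\Aut}(\cE_1) \cong \underline{G(E_{\fp})}$. The map $\Fl_C \to \Bun_G$ sends $(\cE, i)$ to $\cE$, hence forgets $i$ and is in particular invariant under precomposing $i$ with automorphisms of $\cE_1$; so by v-descent along the surjection $\Fl_C \to [\Fl_C/\underline{G(E_{\fp})}]$ it factors uniquely through a map $q$ as in the statement, and $q_K$ is by construction the composite of $q$ with the projection $[\Fl_C/\underline{K_p}] \to [\Fl_C/\underline{G(E_{\fp})}]$.

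For the $\ell$-cohomological smoothness of $q$, I would bring in the Hecke stack $\mathcal{H}$ of \cite{FS} parametrizing triples $(\cE', \cE, i\colon \cE' \to \cE)$ with $i$ a modification bounded by $\mu$ --- base changed to $\Spd C$ along the degree-one divisor determined by $C$ --- together with its two legs $h^{\leftarrow}, h^{\rightarrow} \colon \mathcal{H} \to \Bun_G$ sending such a triple to $\cE'$, resp.\ $\cE$. Restricting the source bundle $\cE'$ to the semistable slope-zero stratum amounts to specifying a $\underline{G(E_{\fp})}$-torsor worth of trivializations of $\cE'$, so there is a natural identification $(h^{\leftarrow})^{-1}(\Bun_G^1) \cong [\Fl_C/\underline{G(E_{\fp})}]$ under which $q$ becomes the restriction of $h^{\rightarrow}$. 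Now, since $b = 1$ is basic, $\Bun_G^1 \hookrightarrow \Bun_G$ is \emph{open}, hence so is $(h^{\leftarrow})^{-1}(\Bun_G^1) \hookrightarrow \mathcal{H}$; and $h^{\rightarrow}$ is $\ell$-cohomologically smooth by \cite{FS}. Since $\ell$-cohomological smoothness is stable under composition and open immersions are $\ell$-cohomologically smooth, $q$ is $\ell$-cohomologically smooth.

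For $q_K$ it then remains to check that the projection $[\Fl_C/\underline{K_p}] \to [\Fl_C/\underline{G(E_{\fp})}]$ is $\ell$-cohomologically smooth (of dimension $0$). This is the base change along $\Fl_C \to \ast$ of $[\ast/\underline{K_p}] \to [\ast/\underline{G(E_{\fp})}]$, which, pro-\'etale locally on the target, is the projection with fibre the space $\underline{G(E_{\fp})/K_p}$; the latter is a disjoint union of points because $K_p$ is open in $G(E_{\fp})$, so this projection is \'etale, and $\ell$-cohomological smoothness is v-local on the target. Composing with $q$ gives the claim for $q_K$.

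The substantive content is entirely the input borrowed from \cite{FS}: the $\ell$-cohomological smoothness of the Hecke leg $h^{\rightarrow}$ over all of $\Bun_G$, and the openness of basic strata. For minuscule $\mu$ --- the case at hand --- the former comes down in essence to the properness and smoothness of the flag variety $\Fl_{G,\mu}$ over $\Spd C$, but still needs the geometric Satake formalism to globalize it over $\Bun_G$; this is where the main work sits, which is why the lemma is quoted from \cite{Hansen:middle} rather than reproved. A routine but necessary technical point is to match conventions between $\Fl_C$, which lives over $\Spd C$, and $\mathcal{H}$, which is fibred over $\mathrm{Div}^1_{E_{\fp}}$: one base changes $\mathcal{H}$ along the point of $\mathrm{Div}^1_{E_{\fp}}$ cut out by $C$, which preserves the $\ell$-cohomological smoothness of $h^{\rightarrow}$.
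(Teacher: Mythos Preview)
The paper does not supply its own proof of this lemma; it simply quotes \cite{Hansen:middle}*{2.10}. Your argument is correct and is essentially the one given there: identify $[\Fl_C/\underline{G(\bQ_p)}]$ with the fibre of the minuscule Hecke correspondence over the open stratum $\Bun_G^1 \hookrightarrow \Bun_G$, invoke the $\ell$-cohomological smoothness of the Hecke leg from \cite{FS}, and then observe that $[\Fl_C/\underline{K_p}] \to [\Fl_C/\underline{G(\bQ_p)}]$ is \'etale because $K_p$ is open.
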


Fix $b\in B(G_{\bQ_p}, \mu^{-1})$. 
The intersection
\[
\cS_{K^p, C}^{\circ b} \cap (\pi_{HT}^{\circ})^{-1} (\Fl^b_C)=
\cS_{K^p, C}^{\circ b} \times_{\Fl_C} \Fl^b_C
\]
defines a quasicompact and quasiseparated open immersion to $(\pi_{HT}^{\circ})^{-1} (\Fl^b_C)$, and it contains all points of rank 1. 
Let
\[
\pi_{HT}^{\circ b}\colon \cS_{K^p, C}^{\circ b} \times_{\Fl_C} \Fl^b_C
\to \Fl^b_C
\]
denote the map induced from $\pi_{HT}^{\circ}$, which is quasicompact and quasiseparated. 

\begin{lem}\label{rank 1 points}
A natural map
\[
(R\pi_{HT*}^{\circ} \overline{\bF}_{\ell})|_{\Fl_C^b} \to 
R\pi_{HT*}^{\circ b}\overline{\bF}_{\ell}
\]
is an isomorphism in $D_{\et}(\Fl_C^b, \overline{\bF}_{\ell})$ \cite{Scholze:diamond}*{14.13}. 
\end{lem}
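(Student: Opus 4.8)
The plan is to compute $(R\pi^\circ_{HT*}\ov{\bF}_\ell)|_{\Fl^b_C}$ by going through the preimage of the stratum and then discarding its ``higher rank'' part. The assertion is local on $\Fl^b_C$, so I may and do replace $\Fl^b_C$ by a quasicompact open and assume the diamonds appearing below are spatial. Set $Y^b:=(\pi^\circ_{HT})^{-1}(\Fl^b_C)$, with the map $p^b\colon Y^b\to\Fl^b_C$ induced by $\pi^\circ_{HT}$, and let $j\colon \cS^{\circ b}_{K^p,C}\times_{\Fl_C}\Fl^b_C\hookrightarrow Y^b$ be the quasicompact open immersion identified in the discussion above, so that $\pi^{\circ b}_{HT}=p^b\circ j$. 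The natural map of the statement then factors as
\[
(R\pi^\circ_{HT*}\ov{\bF}_\ell)|_{\Fl^b_C}\xrightarrow{\,(1)\,} Rp^b_*\ov{\bF}_\ell\xrightarrow{\,(2)\,} Rp^b_*Rj_*\ov{\bF}_\ell=R\pi^{\circ b}_{HT*}\ov{\bF}_\ell,
\]
where $(1)$ is the base-change map for $\pi^\circ_{HT}$ along the locally closed immersion $\Fl^b_C\hookrightarrow\Fl_C$ and $(2)$ is induced by the unit $\ov{\bF}_\ell\to Rj_*\ov{\bF}_\ell$. I will argue that $(1)$ and $(2)$ are both isomorphisms.

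For $(2)$ the point is that $j$ is, by the discussion preceding the statement, a quasicompact open immersion whose source contains every rank-$1$ point of $Y^b$, so that the complement of $j$ consists of points of rank $\geq 2$; the underlying geometric fact is the compatibility of the classical Newton stratification of $\cS^\circ_{K^p,C}$ with the Fargues-Fontaine one of $\Fl_C$ on points of rank $1$ (cf.\ \cite{CS}*{Section 4}). For such $j$, étale cohomology with $\ov{\bF}_\ell$-coefficients ($\ell\neq p$) is insensitive to the complement, in the relative sense that $Rp^b_*\ov{\bF}_\ell\to Rp^b_*Rj_*\ov{\bF}_\ell$ is an isomorphism; this is \cite{Scholze:diamond}*{14.13}, and $\ell\neq p$ is essential here. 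For $(1)$, I would factor $\Fl^b_C\hookrightarrow\Fl_C$ as a closed immersion into a quasicompact open $W\subseteq\Fl_C$ followed by $W\hookrightarrow\Fl_C$: restriction of $R\pi^\circ_{HT*}\ov{\bF}_\ell$ along the open part commutes with base change for free, and along the closed part one invokes the base-change statements of \cite{Scholze:diamond} for the quasicompact quasiseparated morphism $\pi^\circ_{HT}$ (again the circle of results around \cite{Scholze:diamond}*{14.13}).

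The step I expect to be the main obstacle is $(1)$: obtaining base change along the locally closed immersion $\Fl^b_C\hookrightarrow\Fl_C$ for $\pi^\circ_{HT}$, which is merely quasicompact and quasiseparated rather than proper, and making sure the ``rank-$1$ points suffice'' principle is applied in the correct relative form over $\Fl^b_C$ --- including the bookkeeping that $p^b$, $j$ and $W\hookrightarrow\Fl_C$ all satisfy the finiteness hypotheses of \cite{Scholze:diamond}, which follows from $\pi^\circ_{HT}$ being quasicompact and quasiseparated. Once $(1)$ is in place, everything else is the étale-cohomology formalism of \cite{Scholze:diamond} together with the Caraiani-Scholze comparison of the two Newton stratifications on rank-$1$ points; no new geometric input is required.
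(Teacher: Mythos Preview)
Your approach is essentially the paper's, just organized as a two-step factorization rather than a direct stalk check. The paper passes to $D^+(\Fl^b_{C,\et},\ov{\bF}_\ell)$ (all diamonds involved being locally spatial), uses that $\Fl^b_{C,\et}$ has enough geometric points \cite{Scholze:diamond}*{14.3}, and verifies the map on every stalk by citing \cite{CS}*{4.4.1, 4.4.2}.

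Your steps (1) and (2) both ultimately collapse to the same stalk computation. Base change for $R\pi^\circ_{HT*}$ along a \emph{closed} immersion is not a black-box theorem in \cite{Scholze:diamond} for merely qcqs morphisms; to establish your (1) you would check it on stalks at geometric points of $\Fl^b_C$, i.e.\ show that the stalk of $R\pi^\circ_{HT*}\ov{\bF}_\ell$ is the cohomology of the fiber --- and this is exactly what \cite{CS}*{4.4.1} provides in this setting. Likewise, the ``rank-$1$ points suffice'' input for your (2), in the relative form you need over $\Fl^b_C$, again comes down to comparing the cohomology of the two fibers $Y^b_{\bar y}$ and $(\cS^{\circ b}_{K^p,C}\times_{\Fl_C}\Fl^b_C)_{\bar y}$, which is handled by \cite{CS}*{4.4.2}. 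So the decomposition buys no real simplification; the paper's route is the same argument with less scaffolding. Finally, note that \cite{Scholze:diamond}*{14.13} is just the definition of $D_{\et}$, not a base-change or rank-$1$ statement; the reference you want for ``enough points'' is \cite{Scholze:diamond}*{14.3}, and the geometric inputs are \cite{CS}*{4.4.1, 4.4.2}.
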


\begin{proof}
As all diamonds involved are locally spatial, we are allowed to work with $D^+ (\Fl^b_{C, \et}, \overline{\bF}_{\ell})$. 
It suffices to show that
\[
(R \pi_{HT*}^{\circ}\overline{\bF}_{\ell})_{\ov{y}} \to
(R \pi_{HT*}^{\circ b}\overline{\bF}_{\ell})_{\ov{y}}
\]
is an isomorphism for all geometric points $\overline{y}\colon \Spa (C', (C')^+) \to \Fl_C^b$, because $\Fl^b_{C, \et}$ has enough points \cite{Scholze:diamond}*{14.3}. 
This follows from \cite{CS}*{4.4.1, 4.4.2}.  
\end{proof}

We next recall the product formula that describes the structure of $\pi_{HT}^{\circ b}$. 
We need some preparation. 

Let $\pi_{HT}^b\colon \cM_{(G, b,\mu), \infty} \to \Fl_C$ denote the Hodge-Tate period map from (the diamond associated with) the Rapoport-Zink space at infinite level, which factors through $\Fl^b_C$ \cite{CS}*{4.2.6}. Using the interpretation as the moduli of $G_{\bQ_p}$-shtukas \cite{Scholze:Berkeley}*{24.3.5}, we have the following cartesian diagram of v-stacks
\[
\begin{CD}
\cM_{(G, b,\mu), \infty} @>>> * \\
@V \pi_{HT}^b VV @VVV \\
\Fl^b_C @>>>  \Bun_{G_{\bQ_p}}^b.  
\end{CD}
\]
Let us also recall that $\Bun_{G_{\bQ_p}}^b$ has the form of $[* /\widetilde{J}_b]$, where $\widetilde{J}_b$ is the extension of $\underline{J_b (\bQ_p)}$ by a ``unipotent'' group $\widetilde{J}_b^0$ of dimension $d_b:=\langle 2\rho, \nu_b \rangle$, with a canonical splitting $\underline{J_b (\bQ_p)}\hookrightarrow \widetilde{J}_b$ \cite{FS}*{III}. 

The product formula describes the fiber product
\[
\cS_{K^p, C}^{\circ b} \times_{\Fl_C} \cM_{(G, b,\mu), \infty}
\]
using Igusa varieties.  
Choose a completely slope divisible $p$-divisible group $\bX_b$ over $\ov{\bF}_p$ with $G_{\bQ_p}$-structure corresponding to $b\in B(G_{\bQ_p}, \mu^{-1})$. (It is known to exist.)
This choice determines the (perfect) Igusa variety $\Ig^b$ of level $K$ over $\ov{\bF}_p$ \cite{CS}*{4.3.1}, equipped with the action of $J_b (\bQ_p)$. 
Its dimension is $d_b$. 
As $\Ig^b$ is perfect \cite{CS}*{4.3.5}, it lifts uniquely to a flat $p$-adic formal scheme $\Ig^b_{W(\ov{\bF}_p)}$ over $W(\ov{\bF}_p)$. 

\begin{prop}[\cite{CS}*{4.3.19, 4.3.20}]\label{product formula}
Choosing $(\bX_b)_{\cO_C}\in \cM (G_{\bQ_p}, b, \mu)(C)$, we have an isomorphism of diamonds
\[
\cS_{K^p, C}^{\circ b} \times_{\Fl_C} \cM_{(G_{\bQ_p}, b,\mu), \infty}
\cong 
\cM_{(G_{\bQ_p}, b,\mu), \infty} \times_{C} (\Ig^b_{W(\ov{\bF}_p)})^{\textnormal{ad}}_{C}, 
\]
where $(\Ig^b_{W(\ov{\bF}_p)})^{\textnormal{ad}}_{C}$ denotes the adic generic fiber of $\Ig^b_{W(\ov{\bF}_p)}$ base changed to $C$. 
\end{prop}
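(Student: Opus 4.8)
The statement is due to Caraiani--Scholze, and I would follow their argument, recast in the language of diamonds; write $\cM_\infty$ for $\cM_{(G_{\bQ_p}, b,\mu), \infty}$. Both sides are small v-sheaves, so it suffices to exhibit a natural bijection on $S$-points for $S$ an affinoid perfectoid space over $\Spd C$, functorial in $S$. Both sides also carry an evident map to $\cM_\infty$ --- the right-hand side by the first projection, the left-hand side by the projection off the factor $\cS_{K^p, C}^{\circ b}$ --- and the claimed isomorphism is over $\cM_\infty$. So I would fix a map $S\to \cM_\infty$; by the cartesian square above this is the datum of a modification of the trivial $G_{\bQ_p}$-bundle on $X_S$ bounded by $\mu$ whose underlying bundle has type $b$, together with a trivialization. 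Over the untilts this is equivalently a $p$-divisible group $H$ with $G$-structure over the relevant ring of integers, a quasi-isogeny $\ov{H}\dashrightarrow \bX_b$ over the special fibre, and a trivialization of its Tate module (with its $G$-structure). It then remains to identify the fibre of the left-hand side over such an $S$-point with $(\Ig^b_{W(\ov{\bF}_p)})^{\mathrm{ad}}_{C}\times_C S$.

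A lift of $S\to \cM_\infty$ to the left-hand side is an $S$-point of $\cS_{K^p, C}^{\circ b}$ --- an abelian variety $A$ with PEL structure, prime-to-$p$ level $K^p$-structure and a full trivialization of its $p$-adic Tate module, having good reduction whose special fibre lies in the Newton stratum $b$ --- subject to the constraint $\pi_{HT}^{\circ}(A)=\pi_{HT}^b(H)$ in $\Fl^b_C$. The key input is the Scholze--Weinstein classification of $p$-divisible groups over integral perfectoid rings by minuscule modifications of the trivial bundle: matching Hodge--Tate periods together with matching trivializations of the ($G$-structured) Tate modules forces a canonical isomorphism $A[p^\infty]\xrightarrow{\sim}H$ of $p$-divisible groups with $G$-structure respecting the trivializations. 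Hence the datum of $A$ becomes rigidified: composing with the fixed quasi-isogeny $\ov{H}\dashrightarrow \bX_b$, the data amount to an abelian variety over the reduction with its prime-to-$p$ level structure and a quasi-isogeny of its $p$-divisible group to $\bX_b$ --- that is, an $S^{\flat}$-point of the perfect Igusa variety $\Ig^b$, whose definition already incorporates such quasi-isogenies.

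Conversely, the uniqueness of the flat $p$-adic lift $\Ig^b_{W(\ov{\bF}_p)}$ of the perfect scheme $\Ig^b$, combined with Serre--Tate theory, lets me reconstruct $A$ over the untilt from such an Igusa point and the deformation $H$ of $\bX_b$ supplied by the $\cM_\infty$-factor: $H$ is the deformation of the $p$-divisible group, Serre--Tate upgrades it to a deformation of the abelian scheme, and the prime-to-$p$ level structure deforms uniquely; one then checks that $\pi_{HT}^{\circ}(A)=\pi_{HT}^b(H)$ holds automatically, so the output lands in the fibre product. The two constructions are manifestly mutually inverse and functorial in $S$, yielding the asserted isomorphism of diamonds.

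The main obstacle is the deformation-theoretic heart: making precise, in families over a perfectoid base and at infinite level at $p$ (so that the abelian scheme lives over the untilt, not over $C$ itself), the equivalence between ``abelian variety with good reduction and prescribed Hodge--Tate period'' and the pair ``(its rigidified reduction) together with (its $p$-divisible group)''. This rests on the Scholze--Weinstein theory of $p$-divisible groups, on $\cS_{K^p}$ being perfectoid, and on carefully tracking the quasi-isogeny bookkeeping so that the Igusa variety that emerges really is the $\Ig^b$ of \cite{CS}; granting these points the remaining verifications are formal, and I refer to \cite{CS}*{4.3.19, 4.3.20} for the details.
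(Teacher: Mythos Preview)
The paper does not supply its own proof of this proposition; it is stated with attribution to \cite{CS}*{4.3.19, 4.3.20} and used as a black box. Your sketch is a faithful outline of the Caraiani--Scholze argument from that reference (and you yourself defer to it at the end), so there is nothing to contrast.
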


\section{Mantovan's formula}
We combine results explained in the previous section to prove a form of Mantovan's formula. 
(For usual Mantovan's formula, see \cites{Mantovan, Hamacher-Kim}.)
Fix $\ell\neq p$ and set $C:= \widehat{\ov{\bQ}}_p$. The Rapoport-Zink space at infinite level $\cM_{(G, b, \mu), \infty}$ will be regarded as a diamond over $\Spd (C)$. 

Recall that the perfect Igusa variety $\Ig^b$ is the perfection of the limit of Igusa varieties at finite level \cite{CS}*{4.3.8}. 
Define $R\Gamma_c (\Ig^b, \ov{\bF}_{\ell} (d_b))$ to be the colimit of compactly supported cohomology of $\ov{\bF}_{\ell} (d_b)$ on Igusa varieties at finite level. 
The Poincar\'e duality at finite level implies that
\[
R\Gamma (\Ig^b, \ov{\bF}_{\ell})^{*} \cong 
R\Gamma_c (\Ig^b, \ov{\bF}_{\ell} (d_b))[2d_b], 
R\Gamma_c (\Ig^b, \ov{\bF}_{\ell}(d_b))^{*} \cong 
R\Gamma (\Ig^b, \ov{\bF}_{\ell})[2d_b]
\]
in the derived category of smooth $\ov{\bF}_{\ell}$-representations of $J_b (\bQ_p)$, where $(-)^{*}$ is the smooth dual and $\ov{\bF}_{\ell} (d_b)$ is regarded as an $J_b (\bQ_p)$-equivariant sheaf via the functoriality of dualizing complexes; 
compare with the argument in \cite{Kret-Shin}*{7.1}. 

\begin{thm}\label{Mantovan}
There is a filtration of $R\Gamma (\cS_{K^p, \ov{\bQ}}, \ov{\bF}_{\ell})$ by complexes of smooth representations of $G(\bQ_p)\times W_{E_\fp}$ whose graded pieces are
\begin{multline*}
R\Gamma (\Ig^b, \ov{\bF}_{\ell})^{\op}\otimes^L_{C_c (J_b (\bQ_p))} R\Gamma_c (\cM_{(G, b, \mu), \infty}, \ov{\bF}_{\ell}(d_b))[2d_b] \cong \\
(R\Gamma_c (\Ig^b, \ov{\bF}_{\ell}(d_b))^{*})^{\op}\otimes^L_{C_c (J_b (\bQ_p))} R\Gamma_c (\cM_{(G, b, \mu), \infty}, \ov{\bF}_{\ell}(d_b)). 
\end{multline*}
Here, $\ov{\bF}_{\ell}(d_b)$ on $\cM_{(G, b, \mu), \infty}$ is equipped with a $J_b (\bQ_p)$-equivariant structure as in Lemma \ref{identification of upper shriek} below. 
An analogous claim holds at the level $K^p K_p$ by taking $K_p$-invariants. 
\end{thm}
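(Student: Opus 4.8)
The plan is to deduce Mantovan's formula from the product formula (Proposition \ref{product formula}) and the geometric setup of the Hodge-Tate period map, using the excision/stratification filtration attached to the Newton strata. First I would record the decomposition of $R\Gamma(S_{K,\ov{\bQ}},\ov{\bF}_{\ell})$ via Lemma \ref{cohomology of good reduction locus} as $R\Gamma(\cS^{\circ}_{K^p,C},\ov{\bF}_{\ell})$ (passing to infinite level $K^p$ and taking $K_p$-invariants at the end), and then use the Newton stratification $\{\cS^{\circ b}_{K^p,C}\}_{b\in B(G_{\bQ_p},\mu^{-1})}$ of the good-reduction locus. Since the $\cS^{\circ\geq b}_{K^p,C}$ form a filtration by quasicompact opens, excision gives a filtration on $R\Gamma$ whose graded pieces are $R\Gamma_c(\cS^{\circ b}_{K^p,C},\ov{\bF}_{\ell})$; I would check this filtration is $G(\bQ_p)\times W_{E_{\fp}}$-equivariant since all the strata and the specialization maps are.

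Next I would identify each graded piece with the asserted tensor product. The key input is Proposition \ref{product formula}, which (after choosing $(\bX_b)_{\cO_C}\in\cM(G_{\bQ_p},b,\mu)(C)$) identifies $\cS^{\circ b}_{K^p,C}\times_{\Fl_C}\cM_{(G_{\bQ_p},b,\mu),\infty}$ with $\cM_{(G_{\bQ_p},b,\mu),\infty}\times_C(\Ig^b_{W(\ov{\bF}_p)})^{\mathrm{ad}}_C$. Using Lemma \ref{rank 1 points} to replace $(R\pi^{\circ}_{HT*}\ov{\bF}_{\ell})|_{\Fl^b_C}$ by $R\pi^{\circ b}_{HT*}\ov{\bF}_{\ell}$, the cohomology of $\cS^{\circ b}_{K^p,C}$ becomes a cohomology over $\Fl^b_C$ which, via the cartesian square $\cM_{(G,b,\mu),\infty}\to\Bun^b_{G_{\bQ_p}}$ and the presentation $\Bun^b_{G_{\bQ_p}}=[*/\wt{J}_b]$, unwinds into a $J_b(\bQ_p)$-equivariant cohomology. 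Concretely, compactly supported cohomology of the product $\cM_{(G,b,\mu),\infty}\times_C(\Ig^b)^{\mathrm{ad}}_C$ — with the diagonal $J_b(\bQ_p)$-action — computes $R\Gamma_c(\Ig^b,\ov{\bF}_{\ell})\otimes^L_{C_c(J_b(\bQ_p))}R\Gamma_c(\cM_{(G,b,\mu),\infty},\ov{\bF}_{\ell})$, and passing from compact support to the full complex over $\cM_{(G,b,\mu),\infty}$ (equivalently, from $R\Gamma_c(\Ig^b)$ to $(R\Gamma_c(\Ig^b,\ov{\bF}_{\ell}(d_b))^*)^{\op}$) produces the Tate twists and shift $[2d_b]$ via the Poincar\'e duality statements recalled just before the theorem. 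The two displayed expressions agree by those same duality isomorphisms, so it suffices to establish either one. The Tate twist $\ov{\bF}_{\ell}(d_b)$ on $\cM_{(G,b,\mu),\infty}$ with its $J_b(\bQ_p)$-equivariant structure is exactly the one pinned down in Lemma \ref{identification of upper shriek}, so this matching of equivariant structures is the point where I would invoke that lemma.

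The main obstacle will be the bookkeeping of the $J_b(\bQ_p)$-equivariance and the derived tensor product over $C_c(J_b(\bQ_p))$: one must verify that the identification coming from the product formula really does intertwine the natural $J_b(\bQ_p)$-actions on the two sides (the action on $\cM_{(G,b,\mu),\infty}$, and the action on $\Ig^b$ coming from the $\bX_b$-structure), and that the passage from an honest fiber product of diamonds to the $\otimes^L_{C_c(J_b(\bQ_p))}$ is legitimate at the level of derived categories of smooth representations — this is where the presentation $\Bun^b_{G_{\bQ_p}}=[*/\wt{J}_b]$ and the splitting $\underline{J_b(\bQ_p)}\hookrightarrow\wt{J}_b$, together with the fact that $\wt{J}^0_b$ is a pro-unipotent group not affecting $\ov{\bF}_{\ell}$-cohomology, do the work. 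Finally I would check compatibility with the $G(\bQ_p)$-action (transitivity of the Hecke action on the tower $\{\cS^{\circ}_K\}$) and with $W_{E_{\fp}}$, and then take $K_p$-invariants (exact, since $K_p$ is a compact open and $\ell\neq p$) to get the statement at finite level $K^pK_p$.
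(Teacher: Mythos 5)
There is a genuine gap at the very first step, and it propagates. You filter the good-reduction locus $\cS^{\circ}_{K^p,C}$ by the quasicompact opens $\cS^{\circ\geq b}_{K^p,C}$ and claim that excision gives a filtration on $R\Gamma$ with graded pieces $R\Gamma_c(\cS^{\circ b}_{K^p,C},\ov{\bF}_{\ell})$. That is the excision pattern for \emph{compactly supported} cohomology (the triangle $j_!j^*\to\id\to i_*i^*$); for $R\Gamma$ the relevant triangle is $i_*Ri^!\to\id\to Rj_*j^*$, so the graded pieces of $R\Gamma(\cS^{\circ}_{K^p,C},\ov{\bF}_{\ell})$ for your filtration are local cohomology complexes $R\Gamma_{\cS^{\circ b}}(\cS^{\circ\geq b},\ov{\bF}_{\ell})$ involving $Ri^{b!}$, not $R\Gamma_c$ of the strata. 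Since $\cS^{\circ}_{K^p,C}$ is not proper, you cannot conflate the two, and Lemma \ref{cohomology of good reduction locus} only identifies $R\Gamma$ (not $R\Gamma_c$) of the good reduction locus with the cohomology of the Shimura variety.

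Even setting that aside, the graded pieces you would obtain do not match the theorem. A direct computation of $R\Gamma_c(\cS^{\circ b}_{K^p,C},\ov{\bF}_{\ell})$ via the product formula uses $R\pi^{\circ b}_{HT!}$ along the Igusa direction and therefore produces $R\Gamma_c(\Ig^b,\ov{\bF}_{\ell})$, whereas the theorem's graded piece involves $R\Gamma(\Ig^b,\ov{\bF}_{\ell})^{\op}$ (equivalently, the smooth dual of $R\Gamma_c(\Ig^b,\ov{\bF}_{\ell}(d_b))$ — these differ by Poincar\'e duality and are not interchangeable, as the fibers are non-proper). The paper's route avoids both problems in one move: it first pushes forward to the \emph{proper} flag variety, writing $R\Gamma(\cS^{\circ}_{K^p,C},\ov{\bF}_{\ell})\cong R\Gamma(\Fl_C,R\pi^{\circ}_{HT*}\ov{\bF}_{\ell})$, and then runs excision for the Caraiani--Scholze stratification of $\Fl_C$ (which goes in the reverse direction, with the ordinary stratum closed). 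Properness of $\Fl_C$ makes $R\Gamma=R\Gamma_c$ there, so the graded pieces are $R\Gamma_c(\Fl^b_C,R\pi^{\circ b}_{HT*}\ov{\bF}_{\ell})$ (via Lemma \ref{rank 1 points}), which correctly combine the full pushforward along the Igusa fibers (giving $R\Gamma(\Ig^b)$) with compact supports along $\Fl^b_C$ (giving $R\Gamma_c(\cM_{(G,b,\mu),\infty})$ after unwinding the $\widetilde J_b$-torsor via Lemmas \ref{HS SS}, \ref{identification of upper shriek}, \ref{Kunneth}). Your later steps (product formula, torsor descent, Poincar\'e duality) are the right ingredients, but they must be applied to these mixed-support graded pieces; as written, your argument would at best yield the dual statement in the Remark following the theorem, which — as the paper notes — is not known to imply the theorem because $R\Gamma_c(\cM_{(G,b,\mu),\infty},\ov{\bF}_{\ell}(d_b))$ is not admissible.
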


\begin{rem}
The dual statement gives a filtration of $R\Gamma_c (\cS_{K^p, \ov{\bQ}}, \ov{\bF}_{\ell}(d))[2d]$ whose graded pieces are
\[
R\Hom_{J_b (\bQ_p)}(R\Gamma_c (\cM_{(G, b, \mu), \infty}, \overline{\bF}_{\ell}(d_b)), R\Gamma_c (\Ig^b, \ov{\bF}_{\ell}(d_b))).
\]
As the twists on the source and target are cancelled out (see Lemma \ref{dualizing complexes} below), the resulting equality in the Grothendieck group of smooth representations of $G(\bQ_p)\times W_{E_{\fp}}$ is the $\ov{\bF}_{\ell}$-version of usual Mantovan's formula. 

It is not clear to the author if this dual statement implies Theorem \ref{Mantovan} itself as $R\Gamma_c (\cM_{(G, b, \mu), \infty}, \overline{\bF}_{\ell}(d_b))$ is not admissible. 
\end{rem}

The rest of this section is devoted to the proof of this theorem. 
By Lemma \ref{cohomology of good reduction locus}, we have isomorphisms
\[
R\Gamma (\cS_{K^p, \overline{\bQ}}, \overline{\bF}_{\ell}) \cong  
R\Gamma (\cS_{K^p, C}^{\circ}, \ov{\bF}_{\ell}) \cong
R\Gamma (\Fl_C, R\pi_{HT*}\ov{\bF}_{\ell}).  
\]
Using excision sequences, we obtain a filtration whose graded pieces are
\[
R\Gamma_c (\Fl_C^b, R\pi_{HT*}\ov{\bF}_{\ell}) \cong 
R\Gamma_c (\Fl_C^b, R\pi_{HT*}^{\circ b}\overline{\bF}_{\ell}), 
\]
where the isomorphism comes from Lemma \ref{rank 1 points}. 
Therefore, it remains to identify $R\Gamma_c (\Fl_C^b, R\pi_{HT*}^{\circ b}\overline{\bF}_{\ell})$. 
Let us factorize $\pi_{HT}^{b}$ as
\[
\cM_{(G, b,\mu),\infty} \xrightarrow{\pi_{\unip}^b} \cM_{(G, b,\mu),\infty} / \widetilde{J}_{b, C}^{0} \xrightarrow{\pi_{\semi}^{b}} \Fl^b_C, 
\]
where $\pi_{\unip}^b$ (resp. $\pi_{\semi}^b$) is a $\widetilde{J}_{b, C}^0$-torsor (resp. $\underline{J_b (\bQ_p)}$-torsor) as v-sheaves. 

As in \cite{Mantovan}*{Proposition 5.12}, 
\begin{multline*}
R\Gamma (\Ig^b, \ov{\bF}_{\ell})^{\op}\otimes^L_{C_c (J_b (\bQ_p))} R\Gamma_c (\cM_{(G, b, \mu), \infty}, \ov{\bF}_{\ell}) \cong \\
(R\Gamma (\Ig^b, \ov{\bF}_{\ell})\otimes^L_{\ov{\bF}_{\ell}} R\Gamma_c (\cM_{(G, b, \mu), \infty}, \ov{\bF}_{\ell}))\otimes^L_{C_c (J_b (\bQ_p))} \ov{\bF}_{\ell}. 
\end{multline*}
So, it suffices to show the following lemmas:

\begin{lem}\label{HS SS}
Choose an $\ov{\bF}_{\ell}$-valued Haar measure of $J_b (\bQ_p)$.  
There is a $G(\bQ_p)$-equivariant isomorphism
\[
R\Gamma_c (\Fl_C^b, R\pi_{HT*}^{\circ b}\overline{\bF}_{\ell}) 
\cong \\
R\Gamma_c (\cM_{(G,b,\mu), \infty}/ \widetilde{J}_{b, C}^{0}, 
R\pi_{\semi}^{b \, *} R\pi_{HT*}^{\circ b}\overline{\bF}_{\ell})
\otimes^L_{C_c (J_b (\bQ_p))} \overline{\bF}_{\ell}.  
\]
\end{lem}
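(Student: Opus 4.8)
The plan is to recognize this as Shapiro's lemma / cohomological descent for the pro-étale $\underline{J_b(\bQ_p)}$-torsor $\pi_{\semi}^b$; in particular it depends only on the torsor and not on the specific complex $R\pi_{HT*}^{\circ b}\ov{\bF}_{\ell}$. Write $G:=J_b(\bQ_p)$, $X:=\Fl_C^b$, $Y:=\cM_{(G,b,\mu),\infty}/\wt{J}_{b,C}^0$, and $p:=\pi_{\semi}^b\colon Y\to X$, which by the factorization recalled just above is a $\underline{G}$-torsor of v-sheaves, so that $X=[Y/\underline{G}]$; put $\cF:=R\pi_{HT*}^{\circ b}\ov{\bF}_{\ell}\in D_{\et}(X,\ov{\bF}_{\ell})$, so that $p^*\cF=R\pi_{\semi}^{b*}R\pi_{HT*}^{\circ b}\ov{\bF}_{\ell}$ with its canonical $G$-equivariant structure. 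Then the left-hand side of the lemma is $R\Gamma_c(X,\cF)$, the first tensor factor on the right is $R\Gamma_c(Y,p^*\cF)$, and the chosen $\ov{\bF}_{\ell}$-valued Haar measure is what makes $C_c(G)$ a convolution algebra. So I would prove, in this generality, the descent isomorphism $R\Gamma_c([Y/\underline{G}],\cF)\cong R\Gamma_c(Y,p^*\cF)\otimes^L_{C_c(G)}\ov{\bF}_{\ell}$ for an arbitrary $\underline{G}$-torsor $p$ and arbitrary $\cF$; the $G(\bQ_p)$-equivariance will be automatic, since the $G(\bQ_p)$-action commutes with $p$ and with the torsor structure.

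For the descent isomorphism I would argue by base change along the classifying map $c\colon X\to B\underline{G}:=[\ast/\underline{G}]$ of the torsor (so $Y=X\times_{B\underline{G}}\ast$ and $p$ is the base change of $\pi\colon\ast\to B\underline{G}$), giving $Rp_!\ov{\bF}_{\ell}\cong c^*(R\pi_!\ov{\bF}_{\ell})$. The object $R\pi_!\ov{\bF}_{\ell}\in D_{\lis}(B\underline{G},\ov{\bF}_{\ell})\simeq D(C_c(G))$ is the regular representation of $G$: pulling back along $\pi$ (one has $\ast\times_{B\underline{G}}\ast\cong\underline{G}$) reduces this to the computation of $R\Gamma_c(\underline{G},\ov{\bF}_{\ell})$, which is $C_c(G)$ concentrated in degree $0$ --- write $\underline{G}=\bigsqcup_{G/K_0}\underline{K_0}$ for an open pro-$p$ subgroup $K_0$ and $\underline{K_0}=\varprojlim_{K'}\underline{K_0/K'}$, a cofiltered limit of finite disjoint unions of $\Spd C$ along proper transition maps, and invoke continuity of compactly supported étale cohomology --- and the module structure is the regular one. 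Now the projection formula gives $Rp_!\,p^*\cF\cong\cF\otimes_{\ov{\bF}_{\ell}}c^*(R\pi_!\ov{\bF}_{\ell})$, hence $R\Gamma_c(Y,p^*\cF)\cong R\Gamma_c\!\big(X,\cF\otimes_{\ov{\bF}_{\ell}}c^*(R\pi_!\ov{\bF}_{\ell})\big)$. Since $-\otimes^L_{C_c(G)}\ov{\bF}_{\ell}$ commutes with $c^*$, with $\cF\otimes(-)$, and with $R\Gamma_c(X,-)$, and since tensoring the regular representation over $C_c(G)$ against the trivial module $\ov{\bF}_{\ell}$ returns $\ov{\bF}_{\ell}$, we conclude $R\Gamma_c(Y,p^*\cF)\otimes^L_{C_c(G)}\ov{\bF}_{\ell}\cong R\Gamma_c(X,\cF\otimes_{\ov{\bF}_{\ell}}c^*\ov{\bF}_{\ell})\cong R\Gamma_c(X,\cF)$.

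The step I expect to be the main obstacle is the bookkeeping of $C_c(G)$-module structures underlying the last display. The regular representation $R\pi_!\ov{\bF}_{\ell}$ carries two commuting $C_c(G)$-actions --- its $B\underline{G}$-module structure and its endomorphism action by the opposite algebra --- and one has to check that, after applying $c^*$, tensoring with $\cF$, and applying $R\Gamma_c(X,-)$, it is the action matching the one in the statement of the lemma that gets collapsed by $-\otimes^L_{C_c(G)}\ov{\bF}_{\ell}$, while functoriality in $\cF$ is carried by the other. The supporting inputs --- legitimacy of $c$ as a map of Artin v-stacks, $Rf_!$-base change, the equivalence $D_{\lis}(B\underline{G},\ov{\bF}_{\ell})\simeq D(C_c(G))$ together with the identification of $R\pi_!\ov{\bF}_{\ell}$ with the regular representation, and the continuity and properness statements used in the $\underline{G}$-computation --- are all available in \cite{FS}; indeed, the descent isomorphism is essentially the mechanism by which \cite{FS} expresses the cohomology of $\Bun_{G_{\bQ_p}}$ in terms of smooth representations, and if one cites it in that form the lemma is immediate.
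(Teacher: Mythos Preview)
Your argument is correct and rests on the same core mechanism as the paper's---projection formula plus the identification of $R\pi_{\semi!}^b\ov{\bF}_{\ell}$ with the regular representation---but you route it through the classifying stack $B\underline{G}$ and the equivalence $D_{\lis}(B\underline{G},\ov{\bF}_{\ell})\simeq D(C_c(G))$ from \cite{FS}, whereas the paper stays on $\Fl_C^b$ throughout. Concretely, the paper first commutes $R\Gamma_c(\Fl_C^b,-)$ past $-\otimes^L_{C_c(J_b(\bQ_p))}\ov{\bF}_{\ell}$ by invoking Mantovan's Godement-resolution argument \cite{Mantovan}*{Theorem 5.2} (adapted to $J_b(\bQ_p)$-equivariant sheaves on the diamond $\Fl_C^b$), then applies the projection formula at the sheaf level to get $R\pi_{\semi!}^b R\pi_{\semi}^{b*}\cF\cong R\pi_{\semi!}^b\ov{\bF}_{\ell}\otimes^L\cF$, observes that the stalks of $R\pi_{\semi!}^b\ov{\bF}_{\ell}$ are $C_c(J_b(\bQ_p))$, and writes down an explicit integration map $R\pi_{\semi!}^b\ov{\bF}_{\ell}\otimes^L\cF\to\cF$ (this is where the Haar measure enters) which becomes an isomorphism after $-\otimes^L_{C_c(J_b(\bQ_p))}\ov{\bF}_{\ell}$. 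Your packaging is more functorial and correctly flags the one delicate point (the $C_c(G)$-module bookkeeping and the commutation of $R\Gamma_c$ with the tensor product); the paper's hands-on approach makes the dependence on the Haar measure and on Mantovan's argument more visible, and avoids having to set up the $B\underline{G}$ formalism just for this step.
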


\begin{lem}\label{identification of upper shriek}
The map $\pi_{\unip}^b$ is $\ell$-cohomologically smooth of dimension $d_b$ and
\[
R\pi_{\unip}^{b \, !} \bF_{\ell} \cong \ov{\bF}_{\ell}(d_b)[2d_b];
\]
we regard $\ov{\bF}_{\ell}(d_b)$ as a $J_b (\bQ_p)$-equivariant sheaf via this isomorphism. 

Moreover, the natural transform
\[
R\pi^b_{\unip \, !}R\pi_{\unip}^{b \, !} \to \id
\]
is an equivalence. 
\end{lem}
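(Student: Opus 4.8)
\emph{The plan.} Everything reduces to cohomological properties of the group diamond $\widetilde{J}^0_{b,C}$. Recall from \cite{FS}*{III} that $\pi^b_{\unip}$ is a torsor under $\widetilde{J}^0_{b,C}$; v-locally on $\cM_{(G,b,\mu),\infty}/\widetilde{J}^0_{b,C}$ it is the projection $\widetilde{J}^0_{b,C}\times_{\Spd C}(-)\to(-)$. Since $\ell$-cohomological smoothness is v-local and stable under base change, and since for $\ell$-cohomologically smooth maps the formation of $R(-)^!\overline{\bF}_\ell$, of $R(-)_!\overline{\bF}_\ell$, and of the counit $R(-)_!R(-)^!\to\id$ all commute with base change \cites{FS, Scholze:diamond}, it suffices to establish, for the structure map $g\colon \widetilde{J}^0_{b,C}\to\Spd C$:
\begin{enumerate}
\item $g$ is $\ell$-cohomologically smooth of dimension $d_b$;
\item $Rg^!\overline{\bF}_\ell\cong \overline{\bF}_\ell(d_b)[2d_b]$, canonically;
\item the counit $Rg_!Rg^!\to\id$ is an equivalence.
\end{enumerate}

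\emph{Devissage.} By \cite{FS}*{III}, $\widetilde{J}^0_{b,C}$ carries a finite filtration by closed normal sub-group-diamonds with successive subquotients positive Banach--Colmez spaces $\mathcal{BC}(\mathcal{E}_k)$, i.e.\ attached to vector bundles $\mathcal{E}_k$ on the Fargues--Fontaine curve with all Harder--Narasimhan slopes $>0$, and $\sum_k\deg\mathcal{E}_k$ recovers the dimension $d_b=\langle 2\rho,\nu_b\rangle$ of $\widetilde{J}^0_{b,C}$. By \cite{FS}*{II.3} (building on \cite{Scholze:diamond}), each $\mathcal{BC}(\mathcal{E}_k)\to\Spd C$ is partially proper and $\ell$-cohomologically smooth of dimension $\deg\mathcal{E}_k$, with $R(-)^!\overline{\bF}_\ell\cong\overline{\bF}_\ell(\deg\mathcal{E}_k)[2\deg\mathcal{E}_k]$ and with invertible counit $R(-)_!R(-)^!\to\id$; in particular $R(-)_!\overline{\bF}_\ell\cong\overline{\bF}_\ell(-\deg\mathcal{E}_k)[-2\deg\mathcal{E}_k]$. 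As $\ell$-cohomological smoothness is stable under composition, relative dimensions add and the relative dualizing complex of a composite is the (derived) tensor product of the two (so the Tate twists and shifts add up), and counits compose, the devissage along the filtration yields (1), (2), (3) for $g$. In particular $Rg_!Rg^!\overline{\bF}_\ell\cong (Rg_!\overline{\bF}_\ell)(d_b)[2d_b]\cong\overline{\bF}_\ell(-d_b)[-2d_b](d_b)[2d_b]$, in which twists and shifts cancel, and the counit realizes the isomorphism $Rg_!Rg^!\overline{\bF}_\ell\xrightarrow{\ \sim\ }\overline{\bF}_\ell$.

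\emph{Conclusion.} Pulling (1)--(3) back from $g$ gives the $\ell$-cohomological smoothness of $\pi^b_{\unip}$ of dimension $d_b$, the isomorphism $R\pi^{b\,!}_{\unip}\overline{\bF}_\ell\cong\overline{\bF}_\ell(d_b)[2d_b]$, and $R\pi^b_{\unip\,!}R\pi^{b\,!}_{\unip}\overline{\bF}_\ell\xrightarrow{\ \sim\ }\overline{\bF}_\ell$. For the last assertion with general coefficients, use smoothness to write $R\pi^{b\,!}_{\unip}\mathcal{F}\cong R\pi^{b\,*}_{\unip}\mathcal{F}\otimes^L R\pi^{b\,!}_{\unip}\overline{\bF}_\ell$; the projection formula then gives $R\pi^b_{\unip\,!}R\pi^{b\,!}_{\unip}\mathcal{F}\cong\mathcal{F}\otimes^L R\pi^b_{\unip\,!}R\pi^{b\,!}_{\unip}\overline{\bF}_\ell$, compatibly with counits, which reduces to the case $\mathcal{F}=\overline{\bF}_\ell$ just handled. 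Finally, since $Rg^!\overline{\bF}_\ell\cong\overline{\bF}_\ell(d_b)[2d_b]$ is canonical (assembled from the fundamental classes of the $\mathcal{BC}(\mathcal{E}_k)$), so is $R\pi^{b\,!}_{\unip}\overline{\bF}_\ell\cong\overline{\bF}_\ell(d_b)[2d_b]$, and since $\pi^b_{\unip}$ is $J_b(\bQ_p)$-equivariant so is this isomorphism; this is precisely the $J_b(\bQ_p)$-equivariant structure imposed on $\overline{\bF}_\ell(d_b)$. The only real work is importing the Banach--Colmez computations from \cite{FS} and checking that the Tate twists add up correctly through the devissage; beyond this bookkeeping I do not expect a genuine obstacle.
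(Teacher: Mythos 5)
Your proof is correct in outline, but it reaches the three assertions by a genuinely different route from the paper. For the identification $R\pi^{b\,!}_{\unip}\ov{\bF}_\ell\cong\ov{\bF}_\ell(d_b)[2d_b]$ the paper does not run the Banach--Colmez d\'evissage at all: it invokes the concrete presentation of $\widetilde{J}^0_{b}$ over $W(\ov{\bF}_p)$ as the v-sheaf attached to the perfectoid formal scheme $\mathrm{Spf}(W(\ov{\bF}_p)[\![x_1^{1/p^\infty},\dots,x_{d_b}^{1/p^\infty}]\!])$ (from \cite{CS}*{4.2.11}) and then quotes the dualizing-complex computation of \cite{Scholze:diamond}*{Section 27} for such formal schemes, descending afterwards to $[\ast/\widetilde{J}^0_b]$; the smoothness statement and the invertibility of the counit are simply quoted from \cite{FS}*{III.5.1} and the proof of \cite{FS}*{V.2.1}. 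Your d\'evissage through the filtration of $\widetilde{J}^0_{b}$ by positive Banach--Colmez spaces is essentially a re-proof of those cited results of Fargues--Scholze, so it is more self-contained but also re-derives material the paper deliberately outsources; conversely, the paper's choice of trivialization via the formal model over $W(\ov{\bF}_p)$ is not an accident of taste --- it is what makes the comparison of equivariant structures with the Igusa variety in Lemma \ref{dualizing complexes} go through, whereas your ``canonical'' trivialization assembled from fundamental classes of the $\mathcal{BC}(\mathcal{E}_k)$ would require a separate argument there. One small caution: the conjugation action of $J_b(\bQ_p)$ on $\widetilde{J}^0_{b}$ does not obviously preserve your trivialization, so you should not claim the isomorphism is equivariant on the nose --- the induced equivariant structure on $\ov{\bF}_\ell(d_b)$ is a priori a nontrivial character (and Lemma \ref{dualizing complexes} is precisely about pinning it down); since the lemma only \emph{defines} the equivariant structure via the isomorphism, this does not affect correctness here.
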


\if0
\begin{rem}
Beware that $W_{E_{\fp}}$ acts \emph{trivially} on $R\pi_{\unip}^{b \, !} \bF_{\ell}$ for a natural Weil descent datum of $\widetilde{J}_{b, C}^0$ compatible with usual Weil descent data of Rapoport-Zink spaces and Igusa varieties. 
\end{rem}
\fi

\begin{lem}\label{Kunneth}
There is an isomorphsim
\begin{multline*}
R\Gamma_c (\cM_{(G,b,\mu), \infty}, \pi_{HT}^{b \, *} R\pi_{HT*}^{\circ b}\overline{\bF}_{\ell}\otimes^L_{\overline{\bF}_{\ell}} R\pi_{\unip}^{b\, !} \overline{\bF}_{\ell}) \\
\cong 
R\Gamma (\Ig^b, \ov{\bF}_{\ell})\otimes^L_{\ov{\bF}_{\ell}} R\Gamma_c (\cM_{(G, b, \mu), \infty}, R\pi_{\unip}^{b\, !} \overline{\bF}_{\ell})
\end{multline*}
compatible with actions of $G(\bQ_p)$, $J_b (\bQ_p), W_{E_{\fp}}$. 
\end{lem}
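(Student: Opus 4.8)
The plan is to realize both sides as global sections of sheaves on the product-type space appearing in the product formula (Proposition \ref{product formula}) and deduce the K\"unneth-type isomorphism from base change. First I would recall that the product formula gives a $G(\bQ_p)\times J_b(\bQ_p)\times W_{E_\fp}$-equivariant isomorphism
\[
\cS_{K^p, C}^{\circ b} \times_{\Fl_C} \cM_{(G_{\bQ_p}, b,\mu), \infty}
\cong
\cM_{(G_{\bQ_p}, b,\mu), \infty} \times_{C} (\Ig^b_{W(\ov{\bF}_p)})^{\textnormal{ad}}_{C}.
\]
Pulling back $R\pi_{HT*}^{\circ b}\ov{\bF}_\ell$ along $\pi_{HT}^b\colon \cM_{(G,b,\mu),\infty}\to \Fl^b_C$ and using proper-ish base change along the quasicompact quasiseparated map $\pi_{HT}^{\circ b}$, the sheaf $\pi_{HT}^{b\,*}R\pi_{HT*}^{\circ b}\ov{\bF}_\ell$ is identified with the pushforward of $\ov{\bF}_\ell$ along the projection from the left-hand side of the product formula to $\cM_{(G,b,\mu),\infty}$; under the right-hand side this projection is the projection $\cM_{(G,b,\mu),\infty}\times_C (\Ig^b)^{\mathrm{ad}}_C \to \cM_{(G,b,\mu),\infty}$, so this pushforward becomes the constant sheaf tensored with the cohomology of the adic Igusa variety. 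Combined with the comparison $R\Gamma((\Ig^b_{W(\ov{\bF}_p)})^{\mathrm{ad}}_C,\ov{\bF}_\ell)\cong R\Gamma(\Ig^b,\ov{\bF}_\ell)$ (invariance of \'etale cohomology under the perfection / passage to the adic generic fiber of the formal lift, as in \cite{CS}*{4.3.5} and the discussion preceding Lemma \ref{rank 1 points}), I get
\[
\pi_{HT}^{b\,*}R\pi_{HT*}^{\circ b}\ov{\bF}_\ell \cong \ov{\bF}_\ell \otimes^L_{\ov{\bF}_\ell} R\Gamma(\Ig^b,\ov{\bF}_\ell)
\]
as $J_b(\bQ_p)$-equivariant sheaves on $\cM_{(G,b,\mu),\infty}$, with the $J_b(\bQ_p)$-action entirely in the $R\Gamma(\Ig^b,\ov{\bF}_\ell)$-factor.

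Granting this, I would tensor both sides with $R\pi_{\unip}^{b\,!}\ov{\bF}_\ell$ over $\ov{\bF}_\ell$ and apply $R\Gamma_c(\cM_{(G,b,\mu),\infty}, -)$. Since $R\Gamma(\Ig^b,\ov{\bF}_\ell)$ is just a complex of $\ov{\bF}_\ell$-vector spaces (with group action), it pulls out of the compactly supported cohomology of the Rapoport-Zink tower by the projection formula, giving
\[
R\Gamma_c(\cM_{(G,b,\mu),\infty}, \pi_{HT}^{b\,*}R\pi_{HT*}^{\circ b}\ov{\bF}_\ell \otimes^L_{\ov{\bF}_\ell} R\pi_{\unip}^{b\,!}\ov{\bF}_\ell)
\cong
R\Gamma(\Ig^b,\ov{\bF}_\ell)\otimes^L_{\ov{\bF}_\ell} R\Gamma_c(\cM_{(G,b,\mu),\infty}, R\pi_{\unip}^{b\,!}\ov{\bF}_\ell),
\]
which is exactly the claim. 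The equivariance for $G(\bQ_p)$ and $W_{E_\fp}$ is tracked through the product formula, which is equivariant for these actions (they act only through the $\cM_{(G,b,\mu),\infty}$-factor), while the $J_b(\bQ_p)$-equivariance is built into the identification of the first paragraph together with the chosen equivariant structure on $R\pi_{\unip}^{b\,!}\ov{\bF}_\ell$ from Lemma \ref{identification of upper shriek}.

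I expect the main obstacle to be the careful justification of the base-change isomorphism $\pi_{HT}^{b\,*}R\pi_{HT*}^{\circ b}\ov{\bF}_\ell \cong \ov{\bF}_\ell\otimes^L R\Gamma(\Ig^b,\ov{\bF}_\ell)$ in the v-stack / diamond setting: one must check that $\pi_{HT}^{\circ b}$ is qcqs enough that base change applies (this is recorded just before Lemma \ref{rank 1 points}), that the product formula is compatible with $\pi_{HT}^b$ and its Hodge-Tate projection in the precise sense needed, and that passing to the adic generic fiber of the $p$-adic formal lift $\Ig^b_{W(\ov{\bF}_p)}$ does not change \'etale cohomology with $\ov{\bF}_\ell$-coefficients — the last point being a combination of the topological invariance of the \'etale site under perfection and Huber's comparison for formal schemes and their adic generic fibers. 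Keeping the three group actions synchronized through all of these identifications is the bookkeeping-heavy part, but conceptually it is forced once the product formula is invoked equivariantly.
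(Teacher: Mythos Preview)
Your proposal is correct and follows essentially the same route as the paper: use the product formula (Proposition \ref{product formula}) to obtain a cartesian diagram, apply base change (twice) to identify $\pi_{HT}^{b\,*}R\pi_{HT*}^{\circ b}\ov{\bF}_{\ell}$ with the pullback of $R\Gamma((\Ig^b_{W(\ov{\bF}_p)})^{\mathrm{ad}}_C,\ov{\bF}_\ell)\cong R\Gamma(\Ig^b,\ov{\bF}_\ell)$ along the structure map, and then conclude by the projection formula. The only minor discrepancy is the citation for the comparison of Igusa cohomologies: the paper invokes \cite{CS}*{4.4.3} rather than \cite{CS}*{4.3.5}.
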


\begin{proof}[Proof of Lemma \ref{HS SS}]
We first claim that
\begin{multline*}
R\Gamma_c (\cM_{(G,b,\mu), \infty}/ \widetilde{J}_{b, C}^0, 
R\pi_{\semi}^{b \, *} R\pi_{HT*}^{\circ b}\overline{\bF}_{\ell})
\otimes^L_{C_c (J_b (\bQ_p))} \overline{\bF}_{\ell} 
\cong  \\
R\Gamma_c (\Fl^b_C, R\pi_{\semi !}^{b}R\pi_{\semi}^{b \, *} R\pi_{HT*}^{\circ b}\overline{\bF}_{\ell} \otimes^L_{C_c (J_b (\bQ_p))} \overline{\bF}_{\ell} ). 
\end{multline*}
This is essentially \cite{Mantovan}*{Theorem 5.2} applied to $R\pi_{\semi !}^{b}R\pi_{\semi}^{b \, *} R\pi_{HT*}^{\circ b}\overline{\bF}_{\ell}$ on $\Fl^b_C$, or its extension by zero on $\Fl_C$. While the setting is different, the argument there works as we can still take Godement resolutions of $J_b (\bQ_p)$-equivariant sheaves. (The resolution is taken in the categories of not necessarily smooth $J_b (\bQ_p)$-equivariant sheaves.)
So, we shall identify
\[
R\pi_{\semi !}^{b}R\pi_{\semi}^{b \, *} R\pi_{HT*}^{\circ b}\overline{\bF}_{\ell}
\otimes^L_{C_c (J_b (\bQ_p))} \overline{\bF}_{\ell}. 
\]
By the projection formula \cite{Scholze:diamond}*{22.23}, 
\[
R\pi_{\semi !}^{b}R\pi_{\semi}^{b \, *} R\pi_{HT*}^{\circ b}\overline{\bF}_{\ell} \cong 
R\pi_{\semi !}^b \ov{\bF}_{\ell} \otimes^L_{\ov{\bF}_{\ell}} R\pi_{HT*}^{\circ b}\overline{\bF}_{\ell}, 
\]
and every stalk of $R\pi_{\semi !}^b \ov{\bF}_{\ell}$ is isomorphic to $C_c (J_b (\bQ_p))$. 
One has a map, depending on the choice of the Haar measure, 
\[
R\pi_{\semi !}^b \ov{\bF}_{\ell} \otimes^L_{\ov{\bF}_{\ell}} R\pi_{HT*}^{\circ b}\overline{\bF}_{\ell} 
\to R\pi_{HT*}^{\circ b}\overline{\bF}_{\ell}
\]
such that it is given by integration at the level of stalks. Cf.~\cite{Hamacher-Kim}*{B.3} in the setting of schemes. 
It induces an isomorphsim
\[
R\pi_{\semi !}^{b}R\pi_{\semi}^{b \, *} R\pi_{HT*}^{\circ b}\overline{\bF}_{\ell}
\otimes^L_{C_c (J_b (\bQ_p))} \overline{\bF}_{\ell} \cong 
R\pi_{HT*}^{\circ b}\overline{\bF}_{\ell}. 
\]
This completes the proof of Lemma \ref{HS SS}.  
\end{proof}

\begin{proof}[Proof of Lemma \ref{identification of upper shriek}]
As $\pi^b_{\unip}$ is a $\widetilde{J}_b^{0}$-torsor, it follows from \cite{FS}*{III.5.1} that $\pi^b_{\unip}$ is $\ell$-cohomologically smooth of dimension $d_b$.  
The equivalence of 
\[
R\pi^b_{\unip !} R\pi^{b \, !}_{\unip} \to \id
\]
is shown in the proof of \cite{FS}*{V.2.1}. 
It is also shown there that $R\pi^{b\, *}_{\unip}$ is fully faithful. 
To identify $R\pi^{b \, !}_{\unip} \ov{\bF}_{\ell}$, note that $\widetilde{J}^0_{b, W(\ov{\bF}_{p})}$ is the v-sheaf associated with the formal scheme of the form of
\[
\textnormal{Spf} (W(\ov{\bF}_{p})[\![x_1^{1/p^{\infty}}, \dots, x_{d_b}^{1/p^{\infty}}]\!]);
\]
see \cite{CS}*{4.2.11}. 
So, by \cite{Scholze:diamond}*{Section 27}, the dualizing complex of 
\[
\widetilde{J}^0_{b, W(\ov{\bF}_{p})} \to \Spd (W(\ov{\bF}_p))
\]
is naturally isomorphic to $\ov{\bF}_{\ell}(d_b)[2d_b]$. So, the same holds for
\[
\Spd (W(\ov{\bF}_p)) \to [\Spd (W(\ov{\bF}_p))/ \widetilde{J}^0_{b, W(\ov{\bF}_{p})}], 
\]
which in turn gives the desired identification $R\pi^{b \, !}_{\unip} \ov{\bF}_{\ell}\cong \ov{\bF}_{\ell}(d_b)[2d_b]$. 
\end{proof}

\begin{proof}[Proof of Lemma \ref{Kunneth}]
By Proposition \ref{product formula}, we have the following carteisian diagram of locally spatial diamonds:
\[
\begin{CD}
\cM_{(G_{\bQ_p}, b,\mu), \infty} \times_{C} (\Ig^b_{W(\ov{\bF}_p)})^{\textnormal{ad}}_{C} 
@> \pr >> \cM_{(G_{\bQ_p}, b,\mu), \infty} \\
@VVV @VV \pi^b_{HT} V \\
\cS_{K^p, C}^{\circ b} @>> \pi^{\circ b}_{HT} > \Fl_C^b. 
\end{CD}
\]
Using the base change isomorphisms twice, we see that $R\pi_{HT}^{b \, *} R\pi_{HT*}^{\circ b}\overline{\bF}_{\ell}$ identifies with the pullback of $R\Gamma ((\Ig^b_{W(\ov{\bF}_p)})^{\textnormal{ad}}_{C}, \overline{\bF}_{\ell})$ along the structure map 
\[
\cM_{(G_{\bQ_p}, b,\mu), \infty} \to \Spd (C). 
\]
As there is an isomorphism supplied by \cite{CS}*{4.4.3} 
\[
R\Gamma ((\Ig^b_{W(\ov{\bF}_p)})^{\textnormal{ad}}_{C}, \overline{\bF}_{\ell}) \cong
R\Gamma (\Ig^b, \ov{\bF}_{\ell}), 
\]
we finish by the projection formula \cite{Scholze:diamond}*{22.23}. 
\end{proof}

Let us remark that the twist has no effect in the following sense: 

\begin{lem}\label{dualizing complexes}
The $J_b (\bQ_p)$-equivariant structure on $\ov{\bF}_{\ell}(d_b)$ on $\Ig^b$ and on $R\pi_{\unip}^{b \, !} \ov{\bF}_{\ell}$ are given by the same smooth character $\kappa\colon J_b (\bQ_p)\to \ov{\bF}_{\ell}^\times$. 
\end{lem}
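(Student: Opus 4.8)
The two structures are equivariant structures on the same invertible object $\ov{\bF}_{\ell}(d_b)$, and since we work over algebraically closed base fields each is given by a smooth character of $J_b(\bQ_p)$; write $\kappa_{\unip}$ for the one on $R\pi^{b\,!}_{\unip}\ov{\bF}_{\ell}$ and $\kappa_{\Ig}$ for the one on the dualizing sheaf of $\Ig^b$. The plan is to realize both as the restriction along the splitting $\underline{J_b(\bQ_p)}\hookrightarrow\widetilde{J}_b$ of the canonical equivariant structure on the dualizing complex of the $\widetilde{J}_b$-torsor $\ast\to\Bun_{G}^b=[\ast/\widetilde{J}_b]$, transported to $\cM_{(G,b,\mu),\infty}$ and to $(\Ig^b_{W(\ov{\bF}_p)})^{\textnormal{ad}}_C$ through the product formula, and then to observe that a single object on a common cover pulls back to both.

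First I would treat $\kappa_{\unip}$. By Lemma~\ref{identification of upper shriek}, $\pi^b_{HT}=\pi^b_{\semi}\circ\pi^b_{\unip}$ is $\ell$-cohomologically smooth and $R\pi^{b\,!}_{HT}\ov{\bF}_{\ell}=R\pi^{b\,!}_{\unip}R\pi^{b\,!}_{\semi}\ov{\bF}_{\ell}$; since $\pi^b_{\semi}$ is a $\underline{J_b(\bQ_p)}$-torsor its relative dualizing sheaf is $\ov{\bF}_{\ell}$ with the trivial $J_b(\bQ_p)$-equivariant structure, so the $J_b(\bQ_p)$-equivariant object $R\pi^{b\,!}_{HT}\ov{\bF}_{\ell}=\omega_{\cM_{(G,b,\mu),\infty}/\Fl^b_C}$ coincides with $R\pi^{b\,!}_{\unip}\ov{\bF}_{\ell}$, i.e. carries $\kappa_{\unip}$. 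On the other hand $\cM_{(G,b,\mu),\infty}=\Fl^b_C\times_{\Bun_G^b}\ast$ and $\pi^b_{HT}$ is the base change of the $\widetilde{J}_b$-torsor $\ast\to[\ast/\widetilde{J}_b]$, so $\omega_{\cM_{(G,b,\mu),\infty}/\Fl^b_C}$ is the pullback of the dualizing complex of that torsor, whose canonical $\widetilde{J}_b$-equivariant structure restricts along $\underline{J_b(\bQ_p)}$ to $\kappa_{\unip}$.

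Now I would bring in the product formula (Proposition~\ref{product formula}), which is $J_b(\bQ_p)$-equivariant for the action on the $\cM_{(G,b,\mu),\infty}$-factor on the $\cS^{\circ b}_{K^p,C}\times_{\Fl^b_C}(-)$ side and for the diagonal action on the $\cM_{(G,b,\mu),\infty}\times_C(\Ig^b_{W(\ov{\bF}_p)})^{\textnormal{ad}}_C$ side (cf.~\cite{CS}*{4.3.20}). In the Cartesian square in the proof of Lemma~\ref{Kunneth}, the projection $\mathrm{pr}$ to $\cM_{(G,b,\mu),\infty}$ is the base change of $\pi^{\circ b}_{HT}$ along $\pi^b_{HT}$, hence by $\ell$-cohomologically smooth base change for upper-shriek functors its relative dualizing sheaf is $\mathrm{pr}^*\omega_{\cM_{(G,b,\mu),\infty}/\Fl^b_C}$, which carries $\kappa_{\unip}$ by the previous paragraph; read on the other side of the product formula, the same map is a projection off the factor $(\Ig^b_{W(\ov{\bF}_p)})^{\textnormal{ad}}_C$, so its relative dualizing sheaf is the pullback along the other projection of the dualizing complex of $(\Ig^b_{W(\ov{\bF}_p)})^{\textnormal{ad}}_C$ over $\Spd C$, which carries $\kappa_{\Ig}$ by compatibility of dualizing complexes with the (perfect) lift $\Ig^b\rightsquigarrow\Ig^b_{W(\ov{\bF}_p)}$ and with passage to the adic generic fiber. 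Both projections being $J_b(\bQ_p)$-equivariant and surjective as maps of v-stacks, pullback is faithful on $J_b(\bQ_p)$-equivariant structures of the invertible sheaf $\ov{\bF}_{\ell}(d_b)$, so $\kappa_{\unip}=\kappa_{\Ig}=:\kappa$.

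The main obstacle is the equivariant bookkeeping: one must verify that the product formula isomorphism really intertwines the single-factor action with the diagonal one, that the smooth base-change isomorphism for $R(-)^!$ and the torsor computations above can be made $J_b(\bQ_p)$-equivariant, and keep track of the direction of the action (diagonal versus anti-diagonal), which only affects whether one reads off $\kappa$ or $\kappa^{-1}$ and not the asserted equality. Alternatively one can compute both characters directly: $\kappa_{\unip}$ from the description $\widetilde{J}^0_{b,W(\ov{\bF}_p)}=\textnormal{Spf}(W(\ov{\bF}_p)[\![x_1^{1/p^{\infty}},\dots,x_{d_b}^{1/p^{\infty}}]\!])$ of \cite{CS}*{4.2.11} together with \cite{Scholze:diamond}*{Section 27}, as the action of $\underline{J_b(\bQ_p)}\subset\widetilde{J}_b$ by conjugation on $\bigoplus_i\ov{\bF}_p\,x_i$ (suitably Tate-twisted), and $\kappa_{\Ig}$ from the Grothendieck--Messing deformation theory behind \cite{CS}*{4.3}, after identifying the two relevant $d_b$-dimensional ``positive-slope'' tangent spaces $J_b(\bQ_p)$-equivariantly.
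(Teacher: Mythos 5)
Your treatment of $\kappa_{\unip}$ (realizing it as the restriction along the splitting $\underline{J_b(\bQ_p)}\hookrightarrow\widetilde{J}_b$ of the canonical equivariant structure on the dualizing complex of the torsor $*\to\Bun^b_{G_{\bQ_p}}$, with $\pi^b_{\semi}$ contributing trivially by unimodularity) is fine and is essentially what the paper does. The comparison step, however, has a genuine gap. In the cartesian square from Lemma \ref{Kunneth}, the projection $\pr\colon T:=\cM_{(G,b,\mu),\infty}\times_C(\Ig^b_{W(\ov{\bF}_p)})^{\mathrm{ad}}_C\to\cM_{(G,b,\mu),\infty}$ is the base change of $\pi^{\circ b}_{HT}$ along the $\ell$-cohomologically smooth map $\pi^b_{HT}$, so smooth base change for upper shriek gives $R\pr^!\ov{\bF}_\ell\cong\pr_1^*R\pi^{\circ b\,!}_{HT}\ov{\bF}_\ell$, where $\pr_1\colon T\to\cS^{\circ b}_{K^p,C}$ is the \emph{other} projection --- not $\pr^*\omega_{\cM/\Fl^b_C}$ as you assert. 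The object $\pr^*\omega_{\cM/\Fl^b_C}$ is instead $R\pr_1^!\ov{\bF}_\ell$, the relative dualizing complex of $\pr_1$ (the base change of $\pi^b_{HT}$). So your argument ends up equating the dualizing complexes of two different morphisms out of $T$: that of $\pr$, which by K\"unneth carries $\kappa_{\Ig}$, and that of $\pr_1$, which carries $\kappa_{\unip}$. There is no a priori equivariant isomorphism between these two invertible objects; asserting one is exactly the content of the lemma.

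Nor can this be repaired by more careful bookkeeping within the same strategy: the product formula only yields $\omega_{T/\Fl^b_C}\cong R\pr^!\ov{\bF}_\ell\otimes\pr^*\omega_{\cM/\Fl^b_C}$, and both the fiber-product and the product readings of this identity carry the character $\kappa_{\Ig}\cdot\kappa_{\unip}$, so one obtains a tautology rather than a relation between the two characters. (Relatedly, $R\pi^{\circ b\,!}_{HT}\ov{\bF}_\ell$ is a priori only some invertible object on $\cS^{\circ b}_{K^p,C}$ --- in fact the rank-one local system attached to $\kappa_{\Ig}$ via the $\widetilde{J}_b$-torsor $T\to\cS^{\circ b}_{K^p,C}$ --- so one cannot extract a character from it without already knowing the answer.) The missing input is the one the paper actually uses: the full group diamond $\widetilde{J}_b$, and not merely $\underline{J_b(\bQ_p)}$, acts on the v-sheaf attached to $\Ig^b_{W(\ov{\bF}_p)}$, and acts \emph{freely}; this equivariantly identifies the ``tangent space'' of the lifted Igusa variety with $\widetilde{J}^0_b$ and hence forces the two characters to coincide. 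Your closing ``alternatively'' paragraph points at exactly this (the $J_b(\bQ_p)$-equivariant identification of the two $d_b$-dimensional positive-slope deformation spaces), but that identification is the whole lemma and is left unproved in your write-up.
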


\begin{proof}
The first remark is that $\Ig^b$ is nonempty \cite{VW}*{11.2}.  
It is clear that the action of $J_b (\bQ_p)$ on $\ov{\bF}_{\ell}(d_b)_{\Ig^b}$ is given by some character on the $J_b (\bQ_p)$-orbit of a connected component of $\Ig^b$ (which we choose); it will be denoted by $\kappa_1$. 
Moreover, writing $\Ig^b$ as the perfection of the inverse limit of Igusa varieties at finite level, one sees that this character $\kappa_1$ is smooth. 

It remains to study $R\pi_{\unip}^{b \, !} \ov{\bF}_{\ell}$. 
Recall the cartesian diagram
\[
\begin{CD}
\cM_{(G_{\bQ_p},b,\mu),\infty} @>>> \Spd (C) \\
@V \pi_{\unip}^b VV @VVV \\
\cM_{(G_{\bQ_p},b,\mu),\infty} / \widetilde{J}_{b, C}^0 @>>> [ \Spd (C)  / \widetilde{J}^{0}_{b, C}]. 
\end{CD}
\]
By commutation of upper shriek with base change \cite{Scholze:diamond}*{23.12}, $R\pi_{\unip}^{b \, !} \ov{\bF}_{\ell}$ is obtained from the pullback from the corresponding object on $\Spd (C)$, on which $J_b (\bQ_p)$ acts by a character since it is already isomorphic to $\ov{\bF}_{\ell}(d_b)[2d_b]$ on $\Spd (C)$ as in the proof of Lemma \ref{identification of upper shriek}. 
Write $\kappa_2$ for this character. 

To compare $\kappa_1$ and $\kappa_2$, we need only observe that $\widetilde{J}^b_{\ov{\bF}_{p}}$ acts freely on the v-sheaf $\Ig_{W(\ov{\bF}_{p})}^{b, \diamondsuit}$ associated with the formal scheme $\Ig^b_{W(\ov{\bF}_{p})}$.  
(In particular, $\kappa_1$ is independent of the choice of the connected component and $\kappa_2$ is also smooth.)
\end{proof}

\section{Semiperversity}
From now on, we work in the setting of \cite{CS:noncompact}*{Section 2}. 
So, $B=F$ is a CM field and $V=F^{2n}$, and $G$ is a quasi-split similitude unitary group. 
From \cite{CS:noncompact}, we recall the key semiperversity result. 

Let $C$ be a complete algebraically closed nonarchimedean extension of $\bQ_p$ with the ring of integers $\cO_C$ and the residue field $k$. 

\begin{thm}[\cite{CS:noncompact}*{4.6.1}]\label{semiperversity}
There is a cofinal system of formal models $\mathfrak{Fl}$ of $\Fl$ over $\cO_C$ such that the nearby cycle
\[
R\psi (R\pi_{HT*}^{\circ} \bF_{\ell}) 
\]
belongs to $^{p}D^{\geq d}(\Fl_k, \bF_{\ell})$. 
\end{thm}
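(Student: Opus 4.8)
The statement is Theorem 4.6.1 of \cite{CS:noncompact}, so for the purposes of this paper it can simply be quoted. Nonetheless, let me sketch the architecture of the argument, since the rest of the paper rests on it. The plan is to reduce the semiperversity of $R\psi(R\pi_{HT*}^{\circ}\bF_{\ell})$ on $\Fl_k$ to a dimension-count on the Newton strata of $\Fl_C$, using the stratumwise description of $R\pi^{\circ}_{HT*}\bF_{\ell}$ supplied by Lemma \ref{rank 1 points} together with the product formula (Proposition \ref{product formula}). First I would fix a cofinal system of formal models $\mathfrak{Fl}$ of $\Fl$ over $\cO_C$ that are compatible with the Newton stratification — concretely, formal models on which the locally closed pieces $\Fl^b_C$ spread out to locally closed formal subschemes $\mathfrak{Fl}^b$ with special fibres $\mathfrak{Fl}^b_k$ — and check that the formal completion of (a finite level model of) the perfectoid Shimura variety maps to such a $\mathfrak{Fl}$. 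This is where the integral models enter, and why the precise claim is phrased in terms of nearby cycles.

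Next I would use the excision/stratification yoga: $R\psi(R\pi^{\circ}_{HT*}\bF_{\ell})$ restricted to $\mathfrak{Fl}^b_k$ can be computed, via smooth base change and proper base change for nearby cycles, from $R\psi(R\pi^{\circ b}_{HT*}\bF_{\ell})$, i.e.\ from the nearby cycles of the cohomology of the fibre $\cS^{\circ b}_{K^p,C}\times_{\Fl_C}\Fl^b_C$. By the product formula (Proposition \ref{product formula}), after trivialising along a point of $\cM_{(G_{\bQ_p},b,\mu),\infty}(C)$, this fibre is (roughly) the adic generic fibre of the formal Igusa variety $\Ig^b_{W(\ov{\bF}_p)}$, whose cohomology agrees with $R\Gamma(\Ig^b,\bF_{\ell})$. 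The crucial geometric input is then an Artin-type vanishing: $R\Gamma(\Ig^b,\bF_{\ell})$ sits in degrees $\le d_b$ — this uses affineness of Igusa varieties (or at worst of the relevant truncated/perfected pieces) à la \cite{CS}. Combining this with the fact that $\Fl^b_k$ has codimension $\ge$ (something explicit in terms of $b$) inside $\Fl_k$, and with the compatibility of the perverse $t$-structure with the stratification, one gets that $j^{b*}_kR\psi(R\pi^{\circ}_{HT*}\bF_{\ell})$ lives in perverse degrees $\ge d$; letting $b$ range over all of $B(G_{\bQ_p},\mu^{-1})$ and assembling, one concludes $R\psi(R\pi^{\circ}_{HT*}\bF_{\ell})\in\,^{p}D^{\ge d}(\Fl_k,\bF_{\ell})$.

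The main obstacle is the bookkeeping of codimensions: one must match the dimension of the Igusa variety $d_b=\langle 2\rho,\nu_b\rangle$ against the codimension of the Newton stratum $\Fl^b$ in $\Fl$, and verify that the sum works out uniformly to give the single bound $d=\dim\Fl$ on every stratum. Equivalently, the contribution of stratum $b$ to perverse amplitude is $(\dim\Ig^b)-(\mathrm{codim}\,\Fl^b\ \text{-- contributions})$, and the content of the theorem is that this never drops below $d$. This is exactly the dimension-formula input of \cite{CS:noncompact}*{Section 4}; I would import it rather than re-derive it. The two subtler technical points along the way are (i) that one genuinely needs formal/integral models and nearby cycles, because $\pi^{\circ}_{HT}$ is only quasicompact after restriction to the good-reduction locus and the naive ``$Rj_!$ then perverse'' argument on the adic space does not see the right $t$-structure, and (ii) that the perfectness of $\Ig^b$ forces one to work with $\Ig^b_{W(\ov{\bF}_p)}$ and the comparison \cite{CS}*{4.4.3}. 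Since all of this is carried out in \cite{CS:noncompact}, I would present the statement as a citation and only remark on why the integral-model formulation is unavoidable.
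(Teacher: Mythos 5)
Your primary move --- importing the statement by citation from \cite{CS:noncompact}*{4.6.1} --- is exactly what the paper does; no proof is given here, and none is needed. So on that level the proposal matches.

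However, the sketch you append of the Caraiani--Scholze argument is not a correct account of it, and contains a directional error that would matter if you actually tried to carry it out. The mechanism you describe --- restrict to Newton strata, identify the stalks of $R\pi^{\circ}_{HT*}\bF_{\ell}$ on $\Fl^b$ with $R\Gamma(\Ig^b,\bF_{\ell})$ via Lemma \ref{rank 1 points} and the product formula, invoke affineness of Igusa varieties to place this in degrees $\le d_b$, and match $d_b$ against $\mathrm{codim}\,\Fl^b$ --- is precisely the proof of the \emph{other} half of perversity. An upper bound on the cohomological degree of \emph{stalks} is the defining condition for $^{p}D^{\le d}$; membership in $^{p}D^{\ge d}$ is a condition on \emph{costalks} ($i_x^!$), which the stratumwise stalk computation does not control. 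In the compact case one passes from $^{p}D^{\le d}$ to $^{p}D^{\ge d}$ by Verdier self-duality of $R\pi_{HT*}\bF_{\ell}[d]$, using properness of $\pi_{HT}$; that is exactly what fails in the non-compact setting and is the reason Theorem \ref{semiperversity} is only a semiperversity statement proved through nearby cycles. The actual argument of \cite{CS:noncompact} for the $^{p}D^{\ge d}$ bound does not stratify by Newton strata at all: it constructs formal models for which preimages of affine pieces of the special fiber under $\mathrm{sp}^{-1}$ followed by $\pi_{HT}^{\circ}$ are (rising unions of) affinoid perfectoid spaces, and then combines Artin vanishing for affinoids with Poincar\'e duality on the smooth good-reduction locus to bound \emph{compactly supported} cohomology below by $d$, which is the affine-open criterion for $^{p}D^{\ge d}$. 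If you want to record a sketch alongside the citation, it should be that one; as written, your sketch proves (at best) a statement the paper does not use.
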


\if0
As the nearby cycle commutes with Verdier dual (\cite{Hansen:nearby cycle}*{Theorem 1.3.ii},  \cite{GW}*{Theorem 4.4}), the nearby cycle of the Verdier dual
\[
\bD(R\pi_{HT*}^{\circ} \bF_{\ell}):=
R\Hom (R\pi_{HT*}^{\circ} \bF_{\ell} , \bF_{\ell}(d)[2d])
\]
lives in $^{p}D^{\leq -d}(\Fl_k, \bF_{\ell})$. 
\fi

\begin{cor}\label{costalk}
Let $i^{b_0}$ denote the the closed immersion $\Fl (\bQ_p) \hookrightarrow \Fl_C$. 
The local cohomology
\begin{align*}
R\Gamma_{\Fl (\bQ_p)} (\Fl_C, R\pi_{HT*}^{\circ} \bF_{\ell})
&:=
R\Gamma (\Fl (\bQ_p), Ri_{b_0}^! R\pi_{HT*}^{\circ} \bF_{\ell}) \\
&\cong
R\Gamma_c (\Fl (\bQ_p), Ri_{b_0}^! R\pi_{HT*}^{\circ} \bF_{\ell})
\end{align*}
belongs to $D^{\geq d}(\bF_{\ell})$. 
Similarly, 
\[
\Gamma_c ([\Fl (\bQ_p)/K_p], Ri_{b_0}^! R\pi_{HT*}^{\circ} \bF_{\ell})
\]
belongs to $D^{\geq d}(\bF_{\ell})$. 
\end{cor}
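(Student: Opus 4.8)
The plan is to deduce the corollary from the semiperversity statement of Theorem~\ref{semiperversity} together with a comparison between nearby cycles on the special fiber and costalks at classical points of the generic fiber. First I would recall that for any point $b$ of $\Fl_C$ of rank $1$, in particular for a point of the closed stratum $\Fl(\bQ_p)=\Fl^{b_0}_C$, the costalk $(Ri^!_{b_0}R\pi^{\circ}_{HT*}\bF_\ell)$ can be computed on a cofinal system of formal models via the nearby cycles functor; this is the content of the ``precise claim involving integral models and nearby cycles'' alluded to after the statement of Theorem~\ref{semiperversity}, and it is exactly how $\Fl(\bQ_p)$ sits inside $\mathfrak{Fl}$ as (the generic points of) the special fiber $\Fl_k$. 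Concretely, the classical points of $\Fl$ specialize to closed points of $\Fl_k$, and $Ri^!$ at such a point of the generic fiber agrees, after passing to the limit over the formal models, with $Ri^!$ at the corresponding point of $\Fl_k$ applied to $R\psi(R\pi^{\circ}_{HT*}\bF_\ell)$. Since $R\psi(R\pi^{\circ}_{HT*}\bF_\ell)\in {}^pD^{\geq d}(\Fl_k,\bF_\ell)$, the definition of the perverse $t$-structure gives that the costalk $Ri^!_x$ at any point $x$ lies in $D^{\geq d}(\bF_\ell)$ (for the costalk one uses the $j^!$-type bound, which for the perverse $t$-structure on a scheme says $Ri^{!}_x M\in D^{\geq d+\dim\overline{\{x\}}}\supseteq D^{\geq d}$ when $M\in{}^pD^{\geq d}$); in particular this holds uniformly over the points of $\Fl(\bQ_p)$.

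Next I would globalize over $\Fl(\bQ_p)$. The space $\Fl(\bQ_p)$ is a profinite set (the classical points of the closed stratum, which is a single $G(\bQ_p)$-orbit, a copy of $G(\bQ_p)/P_\mu(\bQ_p)$ at infinite level, hence profinite after taking the quotient picture into account); on a profinite set cohomology with compact support and cohomology agree, which accounts for the displayed isomorphism $R\Gamma_{\Fl(\bQ_p)}\cong R\Gamma_c$. Since $Ri^!_{b_0}R\pi^{\circ}_{HT*}\bF_\ell$ has all costalks in $D^{\geq d}$ and $\Fl(\bQ_p)$ has cohomological dimension $0$ (being profinite), the hypercohomology $R\Gamma(\Fl(\bQ_p),Ri^!_{b_0}R\pi^{\circ}_{HT*}\bF_\ell)$ lies in $D^{\geq d}(\bF_\ell)$: there is a spectral sequence (or simply the fact that on a profinite set $R\Gamma$ is exact and computes a filtered colimit of finite direct sums of stalks) with $E_2$-term concentrated in cohomological degree $0$ in the ``space'' direction, so no negative shift can be introduced.

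For the equivariant version, I would pass to the quotient $[\Fl(\bQ_p)/\underline{K_p}]$. Since $K_p$ is a compact $p$-adic group and $\ell\neq p$, the quotient map $\Fl(\bQ_p)\to[\Fl(\bQ_p)/\underline{K_p}]$ is $\ell$-cohomologically smooth of dimension $0$, and $R\Gamma_c([\Fl(\bQ_p)/\underline{K_p}],-)$ is computed as the homotopy $K_p$-coinvariants (equivalently, by Lemma~\ref{map to BunG}-type smoothness, a shift-free operation) of $R\Gamma_c(\Fl(\bQ_p),-)$; taking (derived) coinvariants by a profinite group of order prime to $\ell$ is exact, hence preserves $D^{\geq d}$. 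Therefore $R\Gamma_c([\Fl(\bQ_p)/\underline{K_p}],Ri^!_{b_0}R\pi^{\circ}_{HT*}\bF_\ell)$ also lies in $D^{\geq d}(\bF_\ell)$.

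The main obstacle I expect is making precise the comparison ``costalk at a classical point of the generic fiber $=$ costalk on the special fiber of a formal model, applied to $R\psi$,'' i.e., justifying that the perversity bound of Theorem~\ref{semiperversity}, which is a statement about $R\psi(R\pi^{\circ}_{HT*}\bF_\ell)$ on $\Fl_k$ for a \emph{cofinal} system of formal models $\mathfrak{Fl}$, transfers to a costalk bound for $Ri^!_{b_0}R\pi^{\circ}_{HT*}\bF_\ell$ on the adic/diamond side. This requires: that the classical points $\Fl(\bQ_p)$ are precisely the points reducing into the relevant part of $\Fl_k$; that $Ri^!$ commutes with the passage to the limit over the cofinal system of formal models (so that one may use whichever model makes $R\psi$ perverse); and that $R\psi$ is compatible with $Ri^!$ at a point in the sense needed. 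All of these are in the spirit of \cite{CS:noncompact}, and I would cite the relevant comparison there; the remaining steps (profiniteness of $\Fl(\bQ_p)$, exactness of prime-to-$\ell$ coinvariants, identification $R\Gamma=R\Gamma_c$ on a profinite set) are formal.
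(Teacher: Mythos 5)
Your overall strategy---feed the semiperversity of $R\psi(R\pi_{HT*}^{\circ}\bF_{\ell})$ through the specialization of $\Fl(\bQ_p)$ to closed points of $\Fl_k$ and use that the perverse and standard $t$-structures agree at closed points---is the right one, and your list of required compatibilities (cofinality of formal models, commutation of $Ri^!$ with the limit) identifies the real technical content. But the route you take through pointwise costalks has a genuine gap at the globalization step. You bound the costalks $Ri_x^!\cF$ at individual classical points $x$ and then invoke cohomological dimension $0$ of the profinite set $\Fl(\bQ_p)$ to bound $R\Gamma(\Fl(\bQ_p), Ri^{b_0!}\cF)$. What that last step actually needs is that the complex $Ri^{b_0!}\cF$, as a sheaf on $\Fl(\bQ_p)$, lies in $D^{\geq d}$, i.e.\ that its \emph{stalks} on $\Fl(\bQ_p)$ lie in degrees $\geq d$. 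Those stalks are colimits of $R\Gamma_U(-,\cF)$ over clopen neighborhoods $U$ of $x$ in $\Fl(\bQ_p)$ --- local cohomology along germs of the whole ordinary locus --- and they are not the costalks $Ri_x^!\cF$ at single points. There is no formal implication from ``all $Ri_x^!\cF\in D^{\geq d}$'' to ``$Ri^{b_0!}\cF\in D^{\geq d}$'' here, so your spectral-sequence/exactness argument does not close. (A smaller slip: the perverse costalk bound is $Ri_x^!M\in D^{\geq d-\dim\overline{\{x\}}}$, not $D^{\geq d+\dim\overline{\{x\}}}$; this only gives $D^{\geq d}$ at \emph{closed} points, which is why the paper is careful to reduce to a finite set of closed points rather than ``any point''.)

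The paper avoids pointwise statements entirely. It writes the global local cohomology $R\Gamma_{\Fl(\bQ_p)}(\Fl_C,\cF)$ as the fiber of $R\Gamma(\Fl_C,\cF)\to R\Gamma(\Fl_C\setminus\Fl(\bQ_p),\cF)$ (Lemma \ref{Gaitsgory-Lurie}), and for each formal model $\mathfrak{Fl}$ compares this with the fiber of $R\Gamma(\Fl_C,\cF)\to R\Gamma(\Fl_C\setminus\mathrm{sp}^{-1}(Z),\cF)$, which equals $R\Gamma_Z(\Fl_k,R\psi\cF)$ for $Z$ the \emph{finite set of closed points} of $\Fl_k$ supporting the ordinary locus; left $t$-exactness of $i_Z^!$ and Theorem \ref{semiperversity} put each of these in $D^{\geq d}$, and passing to the $R\lim$ over the cofinal system (under which $\mathrm{sp}^{-1}(Z)$ shrinks to $\Fl(\bQ_p)$, and using that $D^{\geq d}$ is stable under limits) gives the claim. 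If you want to salvage your approach you would need to run exactly this excision-plus-limit argument ``locally on $\Fl(\bQ_p)$,'' at which point it collapses to the paper's global argument; I would recommend restructuring along those lines. Your treatment of $R\Gamma\cong R\Gamma_c$ on the profinite set and of the passage to $[\Fl(\bQ_p)/\underline{K_p}]$ is fine in spirit (though note $K_p$ need not have pro-order prime to $\ell$, so ``exactness of coinvariants'' is not the right justification; one should argue via the $\underline{K_p}$-torsor and the degeneration coming from the first part, as the paper implicitly does).
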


\begin{proof}
Set $\cF:= R\pi_{HT*}^{\circ} \bF_{\ell}$. 
Let $j^{b_0}\colon \Fl_C \setminus \Fl (\bQ_p) \hookrightarrow \Fl$ denote the open immersion from the complement of the ordinary locus. 
We will work in the $\infty$-categorical setup. 
By Lemma \ref{Gaitsgory-Lurie} below, there is a canonical fiber sequence
\[
i_{*}^{b_0}Ri^{b_0!}\cF \to \cF \to Rj^{b_0}_{*}j^{b_0*} \cF
\]
giving rise to a canonical fiber sequence
\[
R\Gamma_{\Fl (\bQ_p)} (\Fl_C, \cF) \to R\Gamma (\Fl_C, \cF) \to R\Gamma (\Fl_C \setminus \Fl (\bQ_p), \cF). 
\]

Let us first fix a formal model $\mathfrak{Fl}$ as in Theorem \ref{semiperversity}. 
The image of the ordinary locus under the specialization map is a closed set $i_Z\colon Z\hookrightarrow \Fl_k$ consisting of finitely many closed points. 
As $i_Z^!$ is left $t$-exact, $i_Z^{!} R\psi (\cF)$ lives in $D^{\geq d}(Z, \bF_{\ell})$.  
Let $j_Z \colon \Fl_k \setminus Z \to \Fl_k$ denote the open immersion from the complement of $Z$. 
As above, there is a canonical fiber sequence 
\[
i_{Z*}Ri_Z^{!}R\psi (\cF) \to R\psi(\cF) \to Rj_{Z*}j_Z^* R\psi (\cF)
\]
giving rise to a canonical fiber sequence
\[
R\Gamma_{Z} (\Fl_k, R\psi (\cF)) \to R\Gamma (\Fl_C, \cF) \to R\Gamma (\Fl_C \setminus\textnormal{sp}^{-1}(Z), \cF)
\]
with $R\Gamma_{Z} (\Fl_k, R\psi (\cF))\in D^{\geq d}(\bF_{\ell})$. 

Now taking the limits by varying $\mathfrak{Fl}$,  we obtain a canonical fiber sequence
\[
R\lim R\Gamma_{Z} (\Fl_k, R\psi (\cF)) \to R\Gamma (\Fl_C, \cF) \to R\Gamma (\Fl_C \setminus \Fl (\bQ_p), \cF) 
\]
as fibre sequences commute with limits.   
So, we conclude that
\[
R\Gamma_{\Fl (\bQ_p)}(\Fl_C, \cF) \cong R\lim R\Gamma_{Z} (\Fl_k, R\psi (\cF)) \in D^{\geq d}(\bF_{\ell})
\]
as desired. 

The second part follows from the first part. 
\end{proof}

The following lemma was used above:

\begin{lem}\label{Gaitsgory-Lurie}
Let $i\colon Z\hookrightarrow X$ be a closed immersion of locally spatial diamonds with the open complement $j\colon U\hookrightarrow X$. 
The functor $i_*$ admits a left adjoint $Ri^!$, and for any object $\cF$ of the $\infty$-category $\cD_{\et}(X, \bF_{\ell})$ \cite{Scholze:diamond}*{17.1}, there is a canonical fiber sequence
\[
i_* Ri^! \cF \to \cF \to 
Rj_* j^* \cF. 
\]
In particular, $Ri^! $ agrees with the one in \cite{Scholze:diamond}. 
A similar statement holds for schemes. 
\end{lem}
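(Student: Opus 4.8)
The plan is to establish Lemma \ref{Gaitsgory-Lurie} by reducing everything to the definitional properties of the six-functor formalism on locally spatial diamonds developed in \cite{Scholze:diamond}, and then porting the standard recollement argument. First I would recall that for a closed immersion $i\colon Z\hookrightarrow X$ of locally spatial diamonds, the functor $i_*$ is fully faithful (its essential image consists of objects supported on $Z$), and that $j^*i_* = 0$ while $j^*$ admits a fully faithful right adjoint $Rj_*$. These facts are in \cite{Scholze:diamond}, Sections 17 and following; the only subtlety is that there $Ri^!$ is constructed as a \emph{right} adjoint of $i_!=i_*$ landing in $\mathcal D_{\et}^+$, and one wants it on the full $\infty$-category $\mathcal D_{\et}(X,\bF_\ell)$. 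So the first real step is to note that since $i_*$ is an exact colimit-preserving functor between presentable stable $\infty$-categories (using that $\mathcal D_{\et}(X,\bF_\ell)$ and $\mathcal D_{\et}(Z,\bF_\ell)$ are presentable, which holds for locally spatial diamonds by \cite{Scholze:diamond}, 17.1–17.3), the adjoint functor theorem produces a right adjoint $Ri^!$ automatically; one then checks it agrees with Scholze's on $\mathcal D_{\et}^+$ by comparing universal properties.

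Next I would construct the fiber sequence. Given $\cF$, let $\cG$ denote the fiber of the adjunction counit $\cF \to Rj_*j^*\cF$. Applying $j^*$ and using $j^*Rj_*\simeq \id$ (full faithfulness of $Rj_*$), we get $j^*\cG \simeq 0$, so $\cG$ is supported on $Z$; hence $\cG \simeq i_*i^*\cG$ by the characterization of the essential image of $i_*$. It then remains to identify $i^*\cG$ with $Ri^!\cF$, i.e.\ to show $\cG \simeq i_*Ri^!\cF$. For this I would use the adjunction: for any $\cH$ on $Z$,
\[
\Hom(i_*\cH, \cG) \simeq \Hom(i_*\cH, \cF),
\]
because $\Hom(i_*\cH, Rj_*j^*\cF)\simeq \Hom(j^*i_*\cH, j^*\cF) = \Hom(0, j^*\cF) = 0$, so the long exact sequence collapses. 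But $\Hom(i_*\cH,\cF)\simeq \Hom(\cH, Ri^!\cF)$ by definition of $Ri^!$, so $\cG$ corepresents the same functor as $i_*Ri^!\cF$ on objects of the form $i_*\cH$; combined with the already-established fact that $\cG$ lies in the essential image of $i_*$, Yoneda gives $\cG\simeq i_*Ri^!\cF$, which is the claimed fiber sequence.

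The statement for schemes is entirely classical (it is the recollement triangle, e.g.\ \cite{Scholze:diamond}-adjacent references or standard; one can cite \cite{FS} or a textbook), so I would dispatch it with a one-line reference. The main obstacle I anticipate is purely a bookkeeping matter: making sure that the $Ri^!$ produced here by the adjoint functor theorem on the \emph{unbounded} category genuinely restricts to the bounded-below $Ri^!$ of \cite{Scholze:diamond}, since the latter is a priori only characterized on $\mathcal D_{\et}^+$. The cleanest way around this is to observe that $i_*$ preserves the bounded-below condition and all filtered colimits, so its right adjoint is computed the same way on $\mathcal D_{\et}^+$ regardless of the ambient category, and the two constructions agree by uniqueness of adjoints; alternatively one invokes that $\mathcal D_{\et}(X,\bF_\ell)$ is left-complete (again \cite{Scholze:diamond}, Section 17) and passes to the limit. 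This is a routine but necessary check; everything else is formal recollement nonsense.
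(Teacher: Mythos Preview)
Your argument is correct and follows essentially the same route as the paper: both identify the essential image of the fully faithful functor $i_*$ as the objects killed by $j^*$, then recognize the fiber of $\cF \to Rj_*j^*\cF$ as $i_*Ri^!\cF$. The paper is terser---citing \cite{Gaitsgory-Lurie}*{2.2.5.5, 2.2.5.6} for the recollement details you spell out, and defining $Ri^!$ directly via the fiber sequence rather than first invoking the adjoint functor theorem on presentable categories---but this is a cosmetic difference.
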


\begin{proof}
One sees that the essential image of the fully faithful functor $i_*$ consists of objects $\cG$ with $j^* \cG\cong 0$. 
Thus, the fiber sequence characterizes the left adjoint functor $Ri^!$, and it is well-defined. A more detail is given in \cite{Gaitsgory-Lurie}*{2.2.5.5} for quasi-projective schemes over an algebraically closed field (this is not an essential assumption; see also \cite{Gaitsgory-Lurie}*{2.2.5.6}). 
\end{proof}

\section{Proof of Theorem \ref{CS}}
We only consider $H^*$; the case of $H^*_c$ follows from this case by the Poincar\'e duality as the dual of a generic unramified $L$-parameter is generic unramified. 
(See the proof of \cite{CS:noncompact}*{1.1}.)

By Proposition \ref{Key 1}, we have
\[
R\Gamma (\Ig^b, \overline{\bF}_{\ell})^{\op} \otimes^L_{C_c (J_b (\bQ_p))} R\Gamma_c (\cM_{(G_{\bQ_p}, b, \mu), K_p}, \overline{\bF}_{\ell}(d_b))_{\fm_p}
\cong 
0.
\]
for any non-ordinary $b$. 
So, by Theorem \ref{Mantovan}, we see that
\[
R\Gamma (S_K, \ov{\bF}_{\ell})_{\fm_p} \cong 
R\Gamma (\Ig^{b_0}, \overline{\bF}_{\ell})^{\op} \otimes^L_{C_c (J_{b_0} (\bQ_p))} R\Gamma_c (\cM_{(G_{\bQ_p}, b_0, \mu), K_p}, \overline{\bF}_{\ell}(d))_{\fm_p}[2d], 
\]
where $b_0$ is the unique ordinary element. 
As in the proof of Theorem \ref{Mantovan}, we have
\begin{multline*}
R\Gamma (\Ig^{b_0}, \overline{\bF}_{\ell})^{\op} \otimes^L_{C_c (J_{b_0} (\bQ_p))} R\Gamma_c (\cM_{(G_{\bQ_p}, b_0, \mu), K_p}, \overline{\bF}_{\ell}(d))_{\fm_p}[2d] \\
\cong 
R\Gamma_c ([\Fl (\bQ_p)/ \underline{K_p}], i^{b_0*}R\pi^{\circ}_{HT*}\ov{\bF}_{\ell})_{\fm_p},  
\end{multline*}
where $i^{b_0}\colon \Fl (\bQ_p)\hookrightarrow \Fl_C$ is a closed immersion from the ordinary stratum. 
We know, by Corollary \ref{costalk}, that 
\[
R\Gamma_c ([\Fl (\bQ_p)/ \underline{K_p}], Ri^{b_0 !} R\pi^{\circ}_{HT*}\ov{\bF}_{\ell})_{\fm_p}
\]
belongs to $D^{\geq d}(\ov{\bF}_{\ell})$. 
We shall prove that 
\[
R\Gamma_c ([\Fl (\bQ_p)/\underline{K_p}], Ri^{b_0 !} R\pi^{\circ}_{HT*}\ov{\bF}_{\ell})_{\fm_p}
\cong
R\Gamma_c ([\Fl (\bQ_p)/\underline{K_p}], i^{b_0*} R\pi^{\circ}_{HT*}\ov{\bF}_{\ell})_{\fm_p}. 
\]
This is the content of Proposition \ref{Key 2}. 

\begin{proof}[Proof of Proposition \ref{Key 2}]
It suffices to show that
\[
R\Gamma_c ([\Fl (\bQ_p) /\underline{K_p}], Ri^{b_0 !} i^b_! i^{b*}R\pi^{\circ}_{HT*}\ov{\bF}_{\ell})_{\fm_p} \cong 0
\]
for every $b\neq b_0$. 
From the proof of Theorem \ref{Mantovan}, one sees that $i^{b*}(R\pi^{\circ}_{HT*}\ov{\bF}_{\ell})$ comes from some object
\[
V_b \in D_{\et}(\Bun_{G_{\bQ_p}}^b, \ov{\bF}_{\ell}) \cong D(J_b (\bQ_p), \ov{\bF}_{\ell})
\]
under the pullback along $q^b_{K_p}\colon [\Fl^b_C /K_p] \to \Bun^b_{G_{\bQ_p}}$ (the restriction of $q_{K_p}$), where the equivalence is given by \cite{FS}*{V.2.2}. 
Moreover, $V_b$ is a bounded complex of admissible representations of $J_b (\bQ_p)$. 
As $q_{K_p}$ is $\ell$-cohomologically smooth by Lemma \ref{map to BunG}, there is an identification
\[
Ri^{b_0 !} i^b_! i^{b*}R\pi^{\circ}_{HT*}\ov{\bF}_{\ell}\cong
q^{b_0 *}_{K_p} (Ri^{b_0 !}i^b_! V_b) 
\]
given by smooth base change \cite{Scholze:diamond}*{23.16.(iii)}.  
Namely, $Ri^{b_0 !} i^b_! i^{b*}R\pi^{\circ}_{HT*}\ov{\bF}_{\ell}$ comes from an object of $D(J_{b_0} (\bQ_p), \ov{\bF}_{\ell})$ by regarding $Ri^{b_0 !}i^b_! V_b$ as such an object.  
As in the proof of Theorem \ref{Mantovan}, we see that $R\Gamma_c ([\Fl (\bQ_p) /\underline{K_p}], Ri^{b_0 !} i^b_! i^{b*}R\pi^{\circ}_{HT*}\ov{\bF}_{\ell})_{\fm_p}$ is given by
\[
(Ri^{b_0 !}i^b_! V_b)^{\op} \otimes^L_{C_c (J_{b_0}(\bQ_p))} R\Gamma_c (\cM_{(G_{\bQ_p}, b_0, \mu), K_p}, \ov{\bF}_{\ell}(d))_{\fm_p}[2d].   
\]
It suffices to show the vanishing
\[
R\Hom_{J_{b_0}(\bQ_p)} (R\Gamma_c (\cM_{(G_{\bQ_p}, b_0, \mu), K_p}, \ov{\bF}_{\ell}(d))_{\fm_p}, (Ri^{b_0 !}i^b_! V_b)^{*})\cong 0. 
\]
We need the following lemma to proceed:

\begin{lem}
The Artin v-stack $\Bun_{G_{\bQ_p}}^{b_0}$ is $\ell$-cohomologically smooth of dimension $-d$. 
Moreover, the dualizing complex $D_{\Bun_{G_{\bQ_p}}^{b_0}}$ on $\Bun_{G_{\bQ_p}}^{b_0}$ has the form of
\[
\kappa^{-1} [-2d] \in D (J_{b_0}(\bQ_p), \ov{\bF}_{\ell}) \cong D_{\et} (\Bun_{G_{\bQ_p}}^{b_0}, \ov{\bF}_{\ell})
\]
for the character $\kappa\colon J_{b_0}(\bQ_p)\to \ov{\bF}_{\ell}^{\times}$ in Lemma \ref{dualizing complexes}. 
\end{lem}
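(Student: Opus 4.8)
The plan is to read off the $\ell$-cohomological smoothness and the dimension by v-descent along the covering $q^{b_0}_{K_p}$ of Lemma~\ref{map to BunG}, and to compute the dualizing complex from the description $\Bun_{G_{\bQ_p}}^{b_0}\cong[*/\widetilde{J}_{b_0}]$ combined with the twist computations already made in Lemmas~\ref{identification of upper shriek} and \ref{dualizing complexes}.

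For the first assertion I would first note that the restriction $q^{b_0}_{K_p}\colon[\Fl^{b_0}_C/\underline{K_p}]\to\Bun_{G_{\bQ_p}}^{b_0}$ of $q_{K_p}$ is the base change of the $\ell$-cohomologically smooth morphism $q_{K_p}$ along the locally closed immersion $\Bun_{G_{\bQ_p}}^{b_0}\hookrightarrow\Bun_{G_{\bQ_p}}$, hence is itself $\ell$-cohomologically smooth of relative dimension $d=\dim\Fl$. Its base change along the surjection (a $\widetilde{J}_{b_0}$-torsor) $*\to\Bun_{G_{\bQ_p}}^{b_0}\cong[*/\widetilde{J}_{b_0}]$ is $[\cM_{(G_{\bQ_p},b_0,\mu),\infty}/\underline{K_p}]$ by the cartesian square preceding Proposition~\ref{product formula}, and this is nonempty since $b_0\in B(G_{\bQ_p},\mu^{-1})$; therefore $q^{b_0}_{K_p}$ is surjective and hence a v-cover. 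As $\Fl^{b_0}_C=\Fl(\bQ_p)$ is $0$-dimensional, $[\Fl^{b_0}_C/\underline{K_p}]$ is $\ell$-cohomologically smooth of dimension $0$, and descent of $\ell$-cohomological smoothness along $q^{b_0}_{K_p}$ together with additivity of relative dimensions gives that $\Bun_{G_{\bQ_p}}^{b_0}$ is $\ell$-cohomologically smooth of dimension $0-d=-d$. (As a consistency check: $[*/\widetilde{J}_{b_0}]$ is $\ell$-cohomologically smooth of dimension $-d_{b_0}$ by \cite{FS}, and $d_{b_0}=\langle 2\rho,\nu_{b_0}\rangle=\langle 2\rho,(\mu^{-1})^{\mathrm{dom}}\rangle=\langle 2\rho,\mu^{\mathrm{dom}}\rangle=d$, using that $\nu_{b_0}$ is the Galois average of $(\mu^{-1})^{\mathrm{dom}}=-w_0(\mu^{\mathrm{dom}})$, that $2\rho$ is Galois invariant, and that $w_0(2\rho)=-2\rho$.)

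For the dualizing complex I would work with the canonical $\widetilde{J}_{b_0}$-torsor $p\colon*\to\Bun_{G_{\bQ_p}}^{b_0}=[*/\widetilde{J}_{b_0}]$, which is $\ell$-cohomologically smooth of relative dimension $d_{b_0}=d$, so that $p^!\cG\cong p^*\cG\otimes^L_{\ov{\bF}_\ell}p^!\ov{\bF}_\ell$ and $p^!\ov{\bF}_\ell\cong\ov{\bF}_\ell(d)[2d]$. Taking $\cG=D_{\Bun_{G_{\bQ_p}}^{b_0}}$ and using $p^!D_{\Bun_{G_{\bQ_p}}^{b_0}}=D_*=\ov{\bF}_\ell$ yields $p^*D_{\Bun_{G_{\bQ_p}}^{b_0}}\cong\ov{\bF}_\ell(-d)[-2d]$ at the level of sheaves; the real content is to identify the $\widetilde{J}_{b_0}$-equivariant (descent) datum. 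Restricting along the canonical splitting $\underline{J_{b_0}(\bQ_p)}\hookrightarrow\widetilde{J}_{b_0}$, that datum on $p^!\ov{\bF}_\ell=D_{*/\Bun_{G_{\bQ_p}}^{b_0}}$ is the conjugation action of $J_{b_0}(\bQ_p)$ on the dualizing complex of $\widetilde{J}_{b_0}^0$, i.e. the same $J_{b_0}(\bQ_p)$-equivariant structure that appears on $R\pi^{b_0\,!}_{\unip}\ov{\bF}_\ell$; by Lemma~\ref{dualizing complexes} this is exactly the smooth character $\kappa$. Hence the datum on $p^*D_{\Bun_{G_{\bQ_p}}^{b_0}}$ is $\kappa^{-1}$, and under the equivalence $D_{\et}(\Bun_{G_{\bQ_p}}^{b_0},\ov{\bF}_\ell)\cong D(J_{b_0}(\bQ_p),\ov{\bF}_\ell)$ of \cite{FS}*{V.2.2} the dualizing complex $D_{\Bun_{G_{\bQ_p}}^{b_0}}$ corresponds to $\kappa^{-1}(-d)[-2d]$; the Tate twist $(-d)$ is trivial on the $D(J_{b_0}(\bQ_p),\ov{\bF}_\ell)$-side, giving $\kappa^{-1}[-2d]$.

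I expect the only genuinely delicate point to be pinning down the equivariant character as precisely $\kappa^{-1}$ rather than some other twist of it; this amounts to tracking the conjugation action of $\widetilde{J}_{b_0}$ on the top of $\widetilde{J}_{b_0}^0$ through the torsor $p$, which is exactly what Lemmas~\ref{identification of upper shriek} and \ref{dualizing complexes} were designed to control, so everything else is bookkeeping. The base-field subtleties (over which base $\Bun_{G_{\bQ_p}}$ is formed, and the resulting harmless Tate twist that disappears on the representation-theoretic side) play no role in the Hecke-equivariant application for which the lemma is needed.
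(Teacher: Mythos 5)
Your identification of the character as $\kappa^{-1}$ via the conjugation action on the dualizing complex of $\widetilde{J}_{b_0}^0$, and your check that $d_{b_0}=\langle 2\rho,\nu_{b_0}\rangle=d$, both match what the paper does (the first part is simply \cite{FS}*{IV.1.22}). But there is a genuine gap in the way you handle the locally profinite directions. You take $p\colon *\to[*/\widetilde{J}_{b_0}]$, declare it $\ell$-cohomologically smooth of relative dimension $d$ with $p^!\ov{\bF}_\ell\cong\ov{\bF}_\ell(d)[2d]$, and then apply the projection formula $p^!\cG\cong p^*\cG\otimes p^!\ov{\bF}_\ell$. The fibers of $p$ are $\widetilde{J}_{b_0}$, which is an extension of the locally profinite $\underline{J_{b_0}(\bQ_p)}$ by the unipotent part; only the unipotent part is genuinely $\ell$-cohomologically smooth. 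For a torsor under $\underline{K}$ with $K$ profinite, $Rf^!$ is computed by (normalized) invariants under a compact open subgroup, so $Rf^!\cG\not\cong f^*\cG\otimes Rf^!\ov{\bF}_\ell$ in general, and even the identification $Rf^!\ov{\bF}_\ell\cong\ov{\bF}_\ell$ requires a choice of Haar measure and a check that the resulting equivariant structure is trivial. This is exactly why the paper factors the structure map as $[*/\widetilde{J}_{b_0}]\xrightarrow{p_{\unip}}[*/\underline{J_{b_0}(\bQ_p)}]\xrightarrow{p_{\semi}}*$, handles $p_{\unip}$ by the section $s$ (your $\kappa$ computation), and then spends the second half of the proof showing $Rp_{\semi}^!\ov{\bF}_\ell\cong\ov{\bF}_\ell$ via the tower $\{J_b^{\diamondsuit}/\underline{J}\}_J$ over compact open pro-$p$ subgroups, a Haar measure, and the triviality of the Hecke-equivariant structure. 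Your remark that ``base-field subtleties play no role'' skips precisely this step; without it, knowing $p^!D_{\Bun^{b_0}_{G_{\bQ_p}}}\cong\ov{\bF}_\ell$ does not determine $D_{\Bun^{b_0}_{G_{\bQ_p}}}$ as a smooth representation (a character in degree $-2d$ plus a summand without $K_0$-invariants would satisfy the same equation, unless you first invoke invertibility of the dualizing complex).

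The same issue infects your argument for the first assertion: you take for granted that $[\Fl(\bQ_p)/\underline{K_p}]$ is $\ell$-cohomologically smooth of dimension $0$ and then ``descend'' smoothness along $q^{b_0}_{K_p}$. Smoothness of a quotient of a locally profinite set by a profinite group is again the same Haar-measure content, and descent of cohomological smoothness of the target from that of the source along a smooth surjection is not a formal implication in the Artin v-stack formalism; it requires producing an actual chart. The paper avoids all of this by citing \cite{FS}*{IV.1.22} directly, which is also the clean fix here: use that citation for the first part, and for the second part insert the computation of $D_{[*/\underline{J_{b_0}(\bQ_p)}]}\cong\ov{\bF}_\ell$ before running your descent-datum argument.
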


\begin{proof}
The first part is \cite{FS}*{IV.1.22}. For the second part, let us consider maps
\[
[* / \underline{J_{b_0} (\bQ_p)}] \xrightarrow{s}
\Bun_{G_{\bQ_p}}^{b_0}\cong [* / \widetilde{J}_{b_0}] \xrightarrow{p_{\unip}}
[* / \underline{J_{b_0} (\bQ_p)}] \xrightarrow{p_{\semi}} *, 
\]
where $p_{\unip}, p_{\semi}$ are natural projections and $s$ is a section of $p_{\unip}$. 
As in the proof of Lemma \ref{dualizing complexes}, $Rs^! \ov{\bF}_{\ell}$ identifies with $\kappa [2d]$ and 
$R s_! \kappa [2d] \cong \ov{\bF}_{\ell}$. 
Since $s$ is a section of $p_{\unip}$, we see that 
\[
Rp_{\unip}^! \kappa[2d] \cong \ov{\bF}_{\ell}, \quad
Rp_{\unip}^! \ov{\bF}_{\ell} \cong \kappa^{-1}[-2d]. 
\]
So, it suffices to show that $Rp_{\semi}^! \ov{\bF}_{\ell}\cong \ov{\bF}_{\ell}$. 
For this, we show that the dualizing complex $D_{J_b^{\diamondsuit} / \underline{J_b (\bQ_p)}}$ on $J_b^{\diamondsuit} / \underline{J_b (\bQ_p)}$ identifies with 
$\bF_{\ell}[2 \dim J_b]$. 
Consider a tower $\{ J_b^{\diamondsuit} / \underline{J}\}_{J}$ \'etale over $J_b^{\diamondsuit} / \underline{J_b (\bQ_p)}$ for open compact pro-$p$ subgroups $J \subset J_b (\bQ_p)$, with Hecke action. 
Fixing an $\ov{\bF}_{\ell}$-valued Haar measure of $J_b (\bQ_p)$, we can identify dualizing complexes $D_{J_b^{\diamondsuit} /\underline{J}}$ with 
$\ov{\bF}_{\ell}[2\dim J_b]$ via the pullback to $D(J_b^{\diamondsuit}, \ov{\bF}_{\ell})$ by \cite{Scholze:diamond}*{24.2}. 
The resulting $J_b (\bQ_p)$-equivarinat structure on $\ov{\bF}_{\ell}[2\dim J_b]$ is trivial, so this gives the desired identification of $D_{J_b^{\diamondsuit} / \underline{J_b (\bQ_p)}}$ as $D_{J_b^{\diamondsuit} /\underline{J}}$ are pullbacks of $D_{J_b^{\diamondsuit} /\underline{J_b (\bQ_p)}}$. 
\end{proof}

By this lemma, we have 
\begin{align*}
&R\Hom_{J_{b_0}(\bQ_p)} (R\Gamma_c (\cM_{(G_{\bQ_p}, b_0, \mu), K_p}, \ov{\bF}_{\ell}(d))_{\fm_p}, (Ri^{b_0 !}i^b_! V_b)^{*})[-2d] \\
&\cong
R\Hom_{J_{b_0}(\bQ_p)} (R\Gamma_c (\cM_{(G_{\bQ_p}, b_0, \mu), K_p}, \ov{\bF}_{\ell})_{\fm_p}, \bD(Ri^{b_0 !}i^b_! V_b)) \\
&\cong
R\Hom_{J_{b_0}(\bQ_p)} (R\Gamma_c (\cM_{(G_{\bQ_p}, b_0, \mu), K_p}, \ov{\bF}_{\ell})_{\fm_p}, \bD(Ri^{b_0 !}\bD (\bD (i^b_! V_b)))) \\
&\cong
R\Hom_{J_{b_0}(\bQ_p)} (R\Gamma_c (\cM_{(G_{\bQ_p}, b_0, \mu), K_p}, \ov{\bF}_{\ell})_{\fm_p}, \bD (\bD (i^{b_0 *}\bD (i^{b}_! V_b)))) \\
&\cong
R\Hom_{J_{b_0}(\bQ_p)} (R\Gamma_c (\cM_{(G_{\bQ_p}, b_0, \mu), K_p}, \ov{\bF}_{\ell})_{\fm_p}, i^{b_0 *}\bD (i^{b}_! V_b)), 
\end{align*}
where the second and fourth isomorphisms can be seen from the characterization of reflexible objects \cite{FS}*{V.6.2}. 
To show that this vanishes, we may assume $V_b$ is concentrated in one degree.  
We may also assume that $R\Gamma_c (\cM_{(G_{\bQ_p}, b_0, \mu), K_p}, \ov{\bF}_{\ell})_{\fm_p}$ itself does not vanish, otherwise there is nothing to prove.

We now look at excursion operators. 
The support of the action of the spectral Bernstein center $\cZ^{\spec} (G_{\bQ_p}, \ov{\bF}_{\ell})$ on $R\Gamma_c (\cM_{(G_{\bQ_p}, b_0, \mu), K_p}, \ov{\bF}_{\ell})_{\fm_p}$ has the only closed point corresponding to $\rho_{\fm_p}$ as in the proof of Theorem \ref{local vanishing}, and this $L$-parameter is generic by assumption. 
On the other hand, the action of $\cZ^{\spec} (G_{\bQ_p}, \ov{\bF}_{\ell})$ on $\bD (i^{b}_! V_b)$ is the same as the Verdier dual of the action on $i^b_! V_b$ twisted by the involution $D^{\spec}$ of $\cZ^{\spec} (G_{\bQ_p}, \ov{\bF}_{\ell})$ as follows from \cite{FS}*{IX.2.2}; compare with \cite{FS}*{IX.5.3}. 
In particular, the support of the action on $\bD (i^{b}_! V_b)$ is contained in the support of the action on $i^{b}_! V_b$, up to the involution $D^{\spec}$. 
As $J_b$ is not quasi-split,  the support of the action on $i^{b}_! V_b$ does not contain any point corresponding to a generic unramified $L$-parameter by Lemma \ref{non-quasi-split implies non-generic}. 
As the class of generic unramified $L$-parameters is preserved under $D^{\spec}$, we conclude the same for $\bD (i^{b}_! V_b)$.  
This implies the desired vanishing: assume that $\Ext^i$ is nonzero for some $i$. We may assume $i=0$ after replacing $V_b$. 
Two natural actions of $\cZ^{\spec} (G_{\bQ_p}, \ov{\bF}_{\ell})$, via the target or the source, on 
\[
\Hom_{J_{b_0}(\bQ_p)} (R\Gamma_c (\cM_{(G_{\bQ_p}, b_0, \mu), K_p}, \ov{\bF}_{\ell})_{\fm_p}, i^{b_0 *}\bD (i^{b}_! V_b)) \neq 0
\]
are the same since the action of $\cZ^{\spec} (G_{\bQ_p}, \ov{\bF}_{\ell})$ on representations of $J_{b_0} (\bQ_p)$ is functorial. 
But, this contradicts properties of supports explained above. 
\end{proof}

Finally let us remark that Theorem \ref{compact} can be proved in the same way using \cite{CS} instead:

\begin{proof}[Proof of Theorem \ref{compact}]
In this case, Shimura varieties are compact. 
Using the perversity \cite{CS}*{6.1.3}, we can argue as in the proof of Theorem \ref{CS} to show that 
$H^i (S_{K, \ov{\bQ}}, \ov{\bF}_{\ell})_{\fm_p}\neq 0$ implies $i \geq d$. 
Using the Poincar\'e duality, we also see that $i\leq d$ holds, i.e., $i=d$. 
\end{proof}

\begin{bibdiv}
\begin{biblist}
\bib{ten authors}{article}{
    author={Allen, B.Patrik}, 
    author={Calegari, Frank},
   author={Caraiani, Ana},
   author={Gee, Toby},
    author={Helm, David},
    author={V.Le Hung, Bao},
    author={Newton, James}, 
    author={Scholze, Peter},
    author={Taylor, Richard}, 
    author={Thorne, Jack A.}, 
   title={Potential automorphy over CM fields},
   eprint={https://arxiv.org/abs/1812.09999}, 
}
\bib{Boyer}{article}{
   author={Boyer, Pascal},
   title={Sur la torsion dans la cohomologie des vari\'{e}t\'{e}s de Shimura de
   Kottwitz-Harris-Taylor},
   language={French, with English and French summaries},
   journal={J. Inst. Math. Jussieu},
   volume={18},
   date={2019},
   number={3},
   pages={499--517},
   issn={1474-7480},
   review={\MR{3936639}},
   doi={10.1017/s1474748017000093},
}
\bib{CS}{article}{
   author={Caraiani, Ana},
   author={Scholze, Peter},
   title={On the generic part of the cohomology of compact unitary Shimura
   varieties},
   journal={Ann. of Math. (2)},
   volume={186},
   date={2017},
   number={3},
   pages={649--766},
   issn={0003-486X},
   review={\MR{3702677}},
   doi={10.4007/annals.2017.186.3.1},
}
\bib{CS:noncompact}{article}{
   author={Caraiani, Ana},
    author={Scholze, Peter},
   title={On the generic part of the cohomology of non-compact unitary Shimura varieties},
   eprint={https://arxiv.org/abs/1909.01898}, 
}
\bib{FS}{article}{
   author={Fargues, Laurent},
    author={Scholze, Peter},
   title={Geometrization of the local Langlands correspondence},
   eprint={https://arxiv.org/abs/2102.13459}, 
}
\if0
\bib{GW}{article}{
   author={Gaisin, Ildar},
   author={Welliaveetil, John},
   title={Constructibility and reflexivity in non-Archimedean geometry},
   journal={Int. Math. Res. Not. IMRN},
   date={2021},
   number={5},
   pages={3438--3511},
   issn={1073-7928},
   review={\MR{4227576}},
   doi={10.1093/imrn/rnz247},
}
\fi
\bib{Gaitsgory-Lurie}{book}{
   author={Gaitsgory, Dennis},
   author={Lurie, Jacob},
   title={Weil's conjecture for function fields. Vol. 1},
   series={Annals of Mathematics Studies},
   volume={199},
   publisher={Princeton University Press, Princeton, NJ},
   date={2019},
   pages={viii+311},
   isbn={978-0-691-18214-8},
   isbn={978-0-691-18213-1},
   review={\MR{3887650}},
}
\bib{Hamacher-Kim}{article}{
   author={Hamacher, Paul},
   author={Kim, Wansu},
   title={$l$-adic \'{e}tale cohomology of Shimura varieties of Hodge type with
   non-trivial coefficients},
   journal={Math. Ann.},
   volume={375},
   date={2019},
   number={3-4},
   pages={973--1044},
   issn={0025-5831},
   review={\MR{4023369}},
   doi={10.1007/s00208-019-01815-6},
}
\if0
\bib{Hansen:nearby cycle}{article}{
   author={Hansen, David},
   title={Remarks on nearby cycles of formal schemes},
   eprint={http://www.davidrenshawhansen.com/nearby.pdf}, 
}
\fi
\bib{Hansen:middle}{article}{
   author={Hansen, David},
   title={On the supercuspidal cohomology of basic local Shimura varieties},
   eprint={http://www.davidrenshawhansen.com/middle.pdf}, 
}
\bib{HKW}{article}{
   author={Hansen, David},
    author={Kaletha, Tasho},
    author={Weinstein, Jared},
   title={On the Kottwitz conjecture for local shtuka spaces},
   eprint={https://arxiv.org/abs/1709.06651}, 
}
\bib{Kret-Shin}{article}{
   author={Kret, Arno},
    author={Shin, Sug Woo}, 
   title={$H^0$ of Igusa varieties via automorphic forms},
   eprint={https://arxiv.org/abs/2102.10690}, 
}
\bib{Lan-Stroh}{article}{
   author={Lan, Kai-Wen},
   author={Stroh, Beno\^{\i}t},
   title={Nearby cycles of automorphic \'{e}tale sheaves},
   journal={Compos. Math.},
   volume={154},
   date={2018},
   number={1},
   pages={80--119},
   issn={0010-437X},
   review={\MR{3719245}},
   doi={10.1112/S0010437X1700745X},
}
\bib{Mantovan}{article}{
   author={Mantovan, Elena},
   title={On certain unitary group Shimura varieties},
   language={English, with English and French summaries},
   note={Vari\'{e}t\'{e}s de Shimura, espaces de Rapoport-Zink et correspondances de
   Langlands locales},
   journal={Ast\'{e}risque},
   number={291},
   date={2004},
   pages={201--331},
   issn={0303-1179},
   review={\MR{2074715}},
}
\bib{MS:supercuspidal support}{article}{
   author={M\'{\i}nguez, Alberto},
   author={S\'{e}cherre, Vincent},
   title={Repr\'{e}sentations lisses modulo $\ell$ de $\GL_m({D})$},
   language={French, with English and French summaries},
   journal={Duke Math. J.},
   volume={163},
   date={2014},
   number={4},
   pages={795--887},
   issn={0012-7094},
   review={\MR{3178433}},
   doi={10.1215/00127094-2430025},
}
\bib{MS:lift}{article}{
   author={M\'{\i}nguez, Alberto},
   author={S\'{e}cherre, Vincent},
   title={Types modulo $\ell$ pour les formes int\'{e}rieures de ${\rm GL}_n$
   sur un corps local non archim\'{e}dien},
   language={French, with English summary},
   note={With an appendix by Vincent S\'{e}cherre et Shaun Stevens},
   journal={Proc. Lond. Math. Soc. (3)},
   volume={109},
   date={2014},
   number={4},
   pages={823--891},
   issn={0024-6115},
   review={\MR{3273486}},
   doi={10.1112/plms/pdu020},
}
\bib{RV}{article}{
   author={Rapoport, Michael},
   author={Viehmann, Eva},
   title={Towards a theory of local Shimura varieties},
   journal={M\"{u}nster J. Math.},
   volume={7},
   date={2014},
   number={1},
   pages={273--326},
   issn={1867-5778},
   review={\MR{3271247}},
}
\if0
\bib{SS:block decomposition}{article}{
   author={S\'{e}cherre, Vincent},
   author={Stevens, Shaun},
   title={Block decomposition of the category of $\ell$-modular smooth
   representations of ${\rm GL}_n(\rm F)$ and its inner forms},
   language={English, with English and French summaries},
   journal={Ann. Sci. \'{E}c. Norm. Sup\'{e}r. (4)},
   volume={49},
   date={2016},
   number={3},
   pages={669--709},
   issn={0012-9593},
   review={\MR{3503829}},
   doi={10.24033/asens.2293},
}
\fi
\bib{Scholze:diamond}{article}{
   author={Scholze, Peter},
   title={Etale cohomology of diamonds},
   eprint={https://arxiv.org/abs/1709.07343}, 
}
\bib{Scholze:Berkeley}{book}{
    author={Scholze, Peter},
    author={Weinstein, Jared},
    title={Berkeley lectures on $p$-adic geometry}, 
   series={Annals of Mathematics Studies},
   volume={389},
   publisher={Princeton University Press, Princeton, NJ},
   date={2020},
  }
\bib{VW}{article}{
   author={Viehmann, Eva},
   author={Wedhorn, Torsten},
   title={Ekedahl-Oort and Newton strata for Shimura varieties of PEL type},
   journal={Math. Ann.},
   volume={356},
   date={2013},
   number={4},
   pages={1493--1550},
   issn={0025-5831},
   review={\MR{3072810}},
   doi={10.1007/s00208-012-0892-z},
}
\if0
\bib{Wedhorn}{article}{
   author={Wedhorn, Torsten},
   title={Ordinariness in good reductions of Shimura varieties of PEL-type},
   language={English, with English and French summaries},
   journal={Ann. Sci. \'{E}cole Norm. Sup. (4)},
   volume={32},
   date={1999},
   number={5},
   pages={575--618},
   issn={0012-9593},
   review={\MR{1710754}},
   doi={10.1016/S0012-9593(01)80001-X},
} 
\fi
\end{biblist}
\end{bibdiv}
\end{document}